\documentclass[article,a4]{elsarticle}
\usepackage{latexsym,amssymb,amsmath,amsthm,amsfonts,graphicx,float}
\usepackage{stix}
\usepackage{tikz}
\usepackage{matlab-prettifier}
\allowdisplaybreaks
\usepackage{hyperref}% I add it for the color of references.
\usepackage{array}
\usepackage{colortbl}
\usepackage{hhline}
\usepackage{soul}
\usepackage{color}% for color text, wirte \textcolor{name of color}{the texte}
\usepackage[english]{babel}	
\usepackage{enumitem}%Pour enumerate romane, arabic ...
\usepackage{listings}
\tikzstyle{vertex}=[circle, draw, inner sep=0pt, minimum size=4pt]
\newcommand{\vertex}{\node[vertex]}
\newcommand{\meet}{\wedge}
\newcommand{\join}{\vee}

\newcommand{\tr}{\mathrm{tr}}

\usepackage{tikz-cd}
\begin{document}
\parskip 10pt
\parindent 0cm

\title{Pseudo triangular norms on bounded trellises}

\author{Abdelkrim Mehenni}
\ead{amehenni1@usthb.dz}
\cortext[co1]{Corresponding author}
\address{LA3C Laboratory, Mathematics Faculty, Department of Algebra and Number Theory,
University of Science and Technology Houari Boumediene, Bp 32, Bab Ezzouar, Algiers,
Algeria}
\author{Lemnaouar Zedam}
\ead{lemnaouar.zedam@univ-msila.dz}
\address{Laboratory of Pure and Applied Mathematics, Department of Mathematics, University of M'sila, Algeria}

\newtheorem{theorem}{Theorem}[section]
\newtheorem{lemma}{Lemma}[section]
\newtheorem{corollary}{Corollary}[section]
\newtheorem{proposition}{Proposition}[section]
\newtheorem{problem}{Problem}[section]
\theoremstyle{definition}
\newtheorem{definition}{Definition}[section]
\newtheorem{example}{Example}[section]
\newtheorem{remark}{Remark}[section]
\newtheorem{notation}{Notation}[section]

\begin{abstract}
In this paper, we introduce the notion of  pseudo-t-norm  on bounded trellises (also known as weakly associative lattices) as an extension  of meet and join operations (resp. t-norm)  on  bounded trellises, and provide some basic examples. 
  We provide a first generic construction method that allows extending a pseudo-t-norm on bounded trellises.  
Also, we introduce the notion of T-distributivity for any pseudo-t-norm $T$ on  bounded trellises. 
Moreover,  We determine the relationship between  pseudo-t-norms and isomorphisms on  bounded trellises.

\end{abstract}

% here please insert two or more keywords appropriate to the topic of your paper
\begin{keyword}
Binary operation, trellis, t-norm, pseudo-t-norm, T-distributive.
\end{keyword}
% here please insert one, two or more (5-digit) "Mathematics Subject Classification" codes appropriate to the topic of your paper
% you can find and choose them at the web page of the American Mathematical Society: http://www.ams.org/msc/msc2010.html
% \classification{00A01, 00M02, 00S03}
%\classification{06B05, 06B15}
\maketitle
\section{Introduction}
Binary operations play an important role in many of the technological tasks
scientists are faced with nowadays. They are specifically important in many problems
related to the fusion of information. More generally, binary operations are
widely used in pure mathematics (e.g., group theory, monoids theory) (see, e.g., \cite{dummit2004abstract,kolman2003discrete,lidl2012applied}). Binary operations have become essential tools in the unit interval and lattices and its applications, several notions and properties (see, e.g., \cite{roman2008lattices,yettou1}).

Triangular norms (t-norms) (as specific binary operations) were introduced by Karl Menger \cite{fried1973some} with the goal of constructing metric spaces using probabilistic
distributions (and therefore values in the interval $[0, 1]$), instead
of using real numbers, to describe the distance between two
elements. Besides, the original proposal is very weak even
including triangular conorms (t-conorms). However, only with the work of Berthold
Schweizer and Abe Sklar in \cite{schweizer1983b} gave an axiomatic for t-norms
as they are used today. Also, they play an important role in theories of fuzzy sets and fuzzy logic \cite{bede2013fuzzy} as they generalize the basic connectives between fuzzy sets. Thus, the main characteristic of the binary operations is that they are used in a large number of areas and disciplines. 

In \cite{de1999triangular,de1994order} they were generalized the notion of  t-norm on  bounded
partially ordered sets, which is a more general structure than
bounded lattice. In \cite{ray1997representation} it was considered an extension of t-norm
for bounded lattice which coincides with the one given
by \cite{de1994order} and \cite{de1999triangular}.

In 1970, E. Fried \cite{fried1970tournaments} introduced the notions of pseudo-ordered sets and
trellises (also called weakly associative lattices or WA-lattices). Trellises are generalization of lattices by considering sets with a reflexive and antisymmetric order, but not necessarily transitive. In 1971, H. X. Skala
\cite{Skala1971trellis} investigated some properties of this notion and provide some examples in particular classes of trellis and the notion of the trellis itself can be interpreted in terms of binary operations on it (see, e.g., \cite{fried1973some,yettou1}).

 The aim of the present paper is to introduce the notion of a pseudo-triangular norm (pseudo-t-norm, for short) and a pseudo-triangular conorm (pseudo-t-conorm, for short) on a bounded trellis as a generalization of triangular norm and  triangular conorm on a bounded lattice and bounded trellis (see, e.g., \cite{de1999triangular,de1994order,ertuugrul2015modified,klement2005logical,bedregal2006t,halavs2016clone}), and we investigate their fundamental properties and some constructions of pseudo-t-norms (resp.\ pseudo-t-conorms). More specifically, we show necessary and sufficient conditions under which a given binary operation on a
trellis coincides with its meet- and its join-operation. Mourover, we  characterize pseudo-t-norms (resp.\ pseudo-t-conorms) on  bounded trellises with respect to the $F$-distributivity.  Furthermore, we study the relationship among pseudo-t-norms and  isomorphisms on a bounded trellises.

This paper is organized as follows. We briefly recall some basic concepts in Section 2.
 In section 3, we introduce the notion of  pseudo-t-norm and pseudo-t-conorm on  bounded trellises and investigate their properties. In section 4, we construct some elements, some constructions and showing necessary and sufficient conditions under which a given binary operation (resp.\ pseudo-t-norm and pseudo-t-conorm) on a trellis (resp.\ bounded trellis) coincide with its meet- and its join-operation. In section 5, we introduce the notion of T-distributivity for any pseudo-t-norm (resp.\ pseudo-t-conorm) on  bounded trellises and characterize some properties.  In section 6, we show that any  isomorphism act on pseudo-t-norms generating also a new pseudo-t-norms on  bounded trellises. Finally, we present some conclusions and discuss future research in Section 7.

\section{Basic concepts}\label{Basic_concepts}
This section serves an introductory purpose. First, we recall some definitions and properties related to pseudo-ordered sets and trellises. Second, we present some specific elements of a trellis that will be needed throughout this paper.
 
\subsection{Pseudo-ordered sets and trellises}
In this subsection, we recall the notions of pseudo-ordered sets and trellises; more information can be found in~\cite{fried1970tournaments,Skala1971trellis,Skala1972trellis}.
A \textit{pseudo-order (relation)} $\unlhd$ on a set $X$ is a binary relation on $X$ that is reflexive (i.e., $x \unlhd x$, 
for any $x \in X$) and antisymmetric (i.e., $x \unlhd y$ and $y \unlhd x$ implies $x = y$, for any $x,y \in X$). A set $X$ equipped
with a pseudo-order relation $\unlhd$ is called a \textit{pseudo-ordered set} (\textit{psoset}, for short) and denoted by $(X,\unlhd)$. 
For any two elements $a, b \in X$, if $a \unlhd b$ and $a\neq b$, then we write $a \lhd b$; if $a \unlhd b$ does not hold, then we also 
write $a \ntrianglelefteq b$. Similarly as for partially ordered sets, a finite pseudo-ordered set can be represented by 
a \textit{Hasse-type diagram} with the following difference: if $x$ and $y$ are not related, while in a partially ordered
set this would be implied by transitivity, then $x$ and $y$ are joined by a dashed curve.

\begin{example}\label{example psoset} 
Consider the pseudo-order relation $\unlhd$ on $X=\{a,b,c,d,e,f\}$ represented by the Hasse-type diagram in Fig.~\ref{Fig01}. Here, $b\unlhd d$, $c \unlhd d$, $d\unlhd e$,
while $b\ntrianglelefteq e$ and $c \unlhd e$.

\begin{figure}[h]
\[\begin{tikzpicture}
\tikzstyle{estun}=[>=latex,thick,dotted]
    \vertex[fill] (a) at (0,0)  [label=right:$a$]  {};
    \vertex[fill] (b) at (-0.75,0.75)  [label=left:$b$]  {};
    \vertex[fill] (c) at (0.75,0.75)  [label=right:$c$]  {};
    \vertex[fill] (d) at (0,1.5)  [label=right:$d$]  {};
    \vertex[fill] (e) at (0,2.25)  [label=right:$e$]  {};
    \vertex[fill] (f) at (0,3)  [label=right:$f$]  {};
   
    \path
        (a) edge (b)
        (a) edge (c)
        (b) edge (d)
        (c) edge (d)
        (d) edge (e)
        (e) edge (f)       
        ;
   \draw[estun] (b) to [bend left] (e)   
         ;
\end{tikzpicture}\]
\caption{Hasse-type diagram of $(X,\unlhd)$.} \label{Fig01}
\end{figure}
\end{example}

The notions of \textit{minimal/maximal} element,  \textit{smallest/greatest} element, \textit{lower/upper bound}, 
\textit{greatest lower bound} (or \textit{infimum}), 
\textit{least upper bound} (or \textit{supremum}) for psosets are defined in the same way as the corresponding notions
for partially ordered sets. For a subset $A$ of a psoset $(X,\unlhd)$, the antisymmetry of the pseudo-order implies that if 
$A$ has an infimum (resp.\ supremum), then it is unique, and is denoted by $\bigwedge A$ (resp.\ $\bigvee A$). If $A=\{a, b\}$, 
then we  write $a\meet b$ (called \textit{meet}) instead of $\bigwedge \{a, b\}$ and $a\join b$ (called \textit{join}) instead of $\bigvee \{a, b\}$.

\begin{definition}\cite{Skala1972trellis}
Let $(X,\unlhd)$ be a psoset. For $x,y\in X$, we write $x\lesssim y$ if there exists a finite sequence $(x_{1}, \ldots, x_{n})$ such that 
$x \unlhd x_{1} \unlhd \ldots\unlhd x_{n} \unlhd y$.
\end{definition}

Note that the relation $\lesssim$ is a pre-order relation, i.e., it is reflexive and transitive, but not necessarily antisymmetric.
If for any $x,y \in X$, it holds that $x \lesssim y $ or $y \lesssim x $, then $(X,\unlhd)$ is called a \textit{pseudo-chain}.

\begin{definition}\cite{gladstien1973characterization}
A $\meet$-semi-trellis (resp.\ $\join$-semi-trellis) is a psoset $(X,\unlhd)$ such that $x \meet y$
(resp.~$x \join y$) exists for all $x,y \in X$.
\end{definition}

\begin{definition}\cite{Skala1971trellis}
A \textit{trellis} is a psoset that is both a $\meet$-semi-trellis and a $\join$-semi-trellis. In other
words, a trellis is an algebra $(X, \meet, \join)$, where $X$ is a nonempty set with the binary operations $\meet$ and $\join$ satisfying the following properties,
for any $a,b,c\in X$:
\begin{enumerate}[label=(\roman*)]
\item $a \join b = b \join a$ and $a \meet b = b \meet a$ (\textit{commutativity})\,;
\item $a \join( b \meet a) = a= a \meet (b \join a)$ (\textit{absorption})\,;
\item $a \join( (a \meet b) \join ( a \meet c)) = a= a \meet( (a \join b) \meet ( a \join c)) $ (\textit{part-preservation})\,.
\end{enumerate}
\end{definition}

\begin{theorem}{\rm \cite{Skala1971trellis}}
A set $X$ with two commutative, absorptive, and part-preserving operations $\meet$ and $\join$  is a trellis if $a \unlhd b$ is defined as $a \meet b=a$ and/or $a \join b=b$. The operations are also idempotent (i.e., $x \meet x= x \join x=x$, for any $x \in X$).
\end{theorem}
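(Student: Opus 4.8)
The plan is to check, in order, that the two conditions defining $\unlhd$ coincide, that $\unlhd$ is a pseudo-order, and that $\meet$ and $\join$ return the infimum and the supremum for this pseudo-order; idempotency I would extract at the very start, since reflexivity depends on it. For idempotency only commutativity and absorption are needed: setting $b=a$ in the absorption law $a\join(b\meet a)=a$ gives $a\join(a\meet a)=a$, and substituting $b=a\meet a$ into $a\meet(b\join a)=a$ together with commutativity rewrites the left-hand side as $a\meet\bigl((a\meet a)\join a\bigr)=a\meet a$, whence $a\meet a=a$; the identity $a\join a=a$ is obtained dually.

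Next I would prove that $a\meet b=a$ and $a\join b=b$ are equivalent, which is what makes the definition of $\unlhd$ unambiguous (this is the content of the ``and/or''). If $a\meet b=a$, then $a\join b=(a\meet b)\join b=b\join(a\meet b)=b$ by commutativity and absorption; conversely, if $a\join b=b$, then $a\meet b=a\meet(a\join b)=a$, again by commutativity and absorption. Reflexivity $a\unlhd a$ is then immediate from idempotency, and antisymmetry follows from commutativity, since $a\unlhd b$ and $b\unlhd a$ force $a=a\meet b=b\meet a=b$. At this point $(X,\unlhd)$ is a psoset.

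It remains to identify $a\meet b$ with $\inf\{a,b\}$ and $a\join b$ with $\sup\{a,b\}$. That $a\meet b$ is a lower bound reduces, through the equivalence just proved, to $(a\meet b)\join a=a$ and $(a\meet b)\join b=b$, and both are instances of absorption after commuting; dually $a\join b$ is an upper bound. The decisive step is extremality, and this is exactly where part-preservation is used. If $c\unlhd a$ and $c\unlhd b$, then $c\join a=a$ and $c\join b=b$, so feeding these into the part-preservation identity $c\meet\bigl((c\join a)\meet(c\join b)\bigr)=c$ collapses it to $c\meet(a\meet b)=c$, i.e.\ $c\unlhd a\meet b$; hence $a\meet b=\inf\{a,b\}$. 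Symmetrically, if $a\unlhd c$ and $b\unlhd c$ then $c\meet a=a$ and $c\meet b=b$, and the companion identity $c\join\bigl((c\meet a)\join(c\meet b)\bigr)=c$ reduces to $(a\join b)\join c=c$, giving $a\join b=\sup\{a,b\}$.

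The only genuine obstacle is this extremality argument. In an ordinary lattice one would simplify an expression like $c\meet(a\meet b)$ by associativity, but associativity is not available in a trellis; part-preservation is precisely the weaker axiom designed to cover this gap, and the key observation is that a single, well-chosen instance of axiom (iii)---rather than any iterated computation---already yields the greatest-lower-bound (and, dually, least-upper-bound) property. Everything else in the argument is routine manipulation of commutativity and absorption.
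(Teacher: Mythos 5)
Your proof is correct. Note that the paper itself offers no proof of this statement---it is quoted from Skala's \emph{Trellis theory} with a citation only---so there is nothing to diverge from; your argument (idempotency from commutativity and absorption alone, the equivalence $a\meet b=a \Leftrightarrow a\join b=b$ to make the ``and/or'' unambiguous, bounds via absorption, and extremality via a single well-chosen instance of part-preservation) is exactly the standard derivation, and every step checks out.
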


\begin{remark}\label{}
One can observe that the difference between the notions of a lattice and a trellis is that the operations $\meet$ and $\join$ are not required to be 
associative in the case of a trellis.
\end{remark}

A \textit{bounded trellis} is a trellis $(X, \unlhd, \meet, \join)$ that additionally has a smallest element denoted
by $0$ and a greatest element denoted by $1$ satisfying $0 \unlhd x \unlhd 1$, for any $x \in X$. For a bounded trellis,
the notation $(X, \unlhd, \meet, \join,0,1)$ is used. Also, a trellis $(X, \unlhd, \meet, \join)$ is called \textit{complete} 
if every subset of $X$ has an infimum and a supremum.

Let $(X, \unlhd, \meet, \join)$ and $(Y, \sqsubseteq, \sqcap, \sqcup)$ be two trellises. A mapping $\varphi: X\to Y$ is called a \textit{homomorphism},  if it satisfies  $\varphi(x\meet y)=\varphi(x) \sqcap \varphi(y)$ and $\varphi(x\join y)=\varphi(x)\sqcup \varphi(y)$, for any $x,y\in X$. An \textit{isomorphism} is a bijective homomorphism. 

\begin{definition} \cite{gladstien1973characterization}
Let $(X,\unlhd,\meet,\join)$ be a trellis and $A \subseteq X$. Then
\begin{enumerate}[label=(\roman*)]
\item $A$ is called a \textit{sub-trellis} of $X$ if $x \meet y \in A$ and $x \join y \in A$, for any $x,y \in A$;
\item $A$ is called a \textit{sub-lattice} of $X$ if is a sub-trellis and $\unlhd$ is transitive on $A$.
\end{enumerate}
\end{definition}

\begin{theorem}{\rm \cite{Skala1972trellis}}\label{the equivalent between the meet (resp.join) and the pseudo-order}
Let $(X,\unlhd,\meet,\join)$ be a trellis. The following statements are equivalent:
\begin{enumerate}[label=(\roman*)]  
\item $\unlhd$ is transitive;
\item the meet $\meet$ and the join $\join$ operations are associative; 
\item one of the operations meet $\meet$ or join $\join$ is associative.
\end{enumerate}
\end{theorem}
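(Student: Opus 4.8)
The plan is to establish the three statements as a cycle of implications, $(i)\Rightarrow(ii)\Rightarrow(iii)\Rightarrow(i)$. The implication $(ii)\Rightarrow(iii)$ is immediate, since if both $\meet$ and $\join$ are associative then in particular one of them is. The two substantive implications are $(i)\Rightarrow(ii)$ and $(iii)\Rightarrow(i)$; throughout I would freely use the equivalence $a\meet b=a \iff a\join b=b$ (both being reformulations of $a\unlhd b$), which follows from commutativity and the absorption law.

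For $(i)\Rightarrow(ii)$, assuming $\unlhd$ transitive turns $(X,\unlhd)$ into a genuine partially ordered set in which every pair has an infimum and a supremum. The key step is to show that $(a\meet b)\meet c$ and $a\meet(b\meet c)$ both coincide with the infimum $\bigwedge\{a,b,c\}$. To see that $m=(a\meet b)\meet c$ is a lower bound of $\{a,b,c\}$ I would combine $m\unlhd a\meet b\unlhd a$ and $m\unlhd a\meet b\unlhd b$ with transitivity (this is exactly where $(i)$ is used), together with $m\unlhd c$; and for any lower bound $d$ of $\{a,b,c\}$ the defining property of the binary meet gives $d\unlhd a\meet b$ and hence $d\unlhd m$. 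Thus $m=\bigwedge\{a,b,c\}$, and the same computation yields $a\meet(b\meet c)=\bigwedge\{a,b,c\}$; uniqueness of the infimum (antisymmetry) then gives associativity of $\meet$, and the dual argument handles $\join$.

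For $(iii)\Rightarrow(i)$ I would argue by cases on which operation is associative. Suppose $\meet$ is associative and take $a\unlhd b\unlhd c$, i.e.\ $a\meet b=a$ and $b\meet c=b$. Then
\[
a\meet c=(a\meet b)\meet c=a\meet(b\meet c)=a\meet b=a,
\]
so $a\unlhd c$ and $\unlhd$ is transitive. If instead $\join$ is associative, the dual computation $a\join c=a\join(b\join c)=(a\join b)\join c=b\join c=c$ gives the same conclusion. The main obstacle is conceptual rather than computational: one must notice that transitivity is precisely the hypothesis needed to promote the binary meet/join into a ternary infimum/supremum in $(i)\Rightarrow(ii)$, and, conversely, that associativity lets one ``chain'' two comparabilities through the slick identity above in $(iii)\Rightarrow(i)$. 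Some care is also needed to invoke the characterization of $\unlhd$ (via $\meet$ or via $\join$) matching the operation assumed to be associative.
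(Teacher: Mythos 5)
Your proof is correct; note, however, that the paper offers no proof of this theorem at all --- it is quoted directly from Skala's monograph \cite{Skala1972trellis} --- so there is no internal argument to compare against. Your cycle $(i)\Rightarrow(ii)\Rightarrow(iii)\Rightarrow(i)$ is a complete, self-contained proof: the identification of $(a\meet b)\meet c$ and $a\meet(b\meet c)$ with $\bigwedge\{a,b,c\}$ uses transitivity exactly where needed, the computation $a\meet c=(a\meet b)\meet c=a\meet(b\meet c)=a\meet b=a$ (and its dual for $\join$) settles $(iii)\Rightarrow(i)$, and your appeal to absorption to pass between the characterizations $a\unlhd b\iff a\meet b=a\iff a\join b=b$ correctly disposes of the only delicate point.
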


\begin{definition}\cite{Skala1972trellis}\label{Modular_Trellis}
A trellis $(X,\unlhd,\meet,\join)$ is said to be \textit{modular}, if $x \unlhd z$ implies that $x \join (y \meet z)= (x \join y ) \meet z$, for any $y \in X$. \end{definition}
We will also use the following results.

% this proposition is not used
\begin{proposition}{\rm \cite{Skala1971trellis}}\label{two famous implications on modular trellis}
Let $(X,\unlhd,\meet,\join)$ be a modular trellis and $x,y,z \in X$. If $x \unlhd y \unlhd z$, then  $x \meet z \unlhd y \unlhd x \join z$. 
\end{proposition}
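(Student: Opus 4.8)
The plan is to establish the two inequalities $x \meet z \unlhd y$ and $y \unlhd x \join z$ separately, each by a single well-chosen application of the modularity law. The crucial point to keep in mind is that, since $\unlhd$ is not transitive, the hypothesis $x \unlhd y \unlhd z$ does \emph{not} yield $x \unlhd z$; hence we cannot feed the pair $(x,z)$ into the definition of modularity. The whole argument therefore hinges on applying modularity only to the pairs we genuinely know to be comparable, namely $x \unlhd y$ and $y \unlhd z$.

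First I would record the standard equivalence $a \unlhd b \iff a \meet b = a \iff a \join b = b$, which follows immediately from commutativity together with the absorption law $x \join (y \meet x) = x = x \meet (y \join x)$. Using it, the hypothesis $x \unlhd y \unlhd z$ translates into the four identities $x \meet y = x$, $x \join y = y$, $y \meet z = y$, and $y \join z = z$, which are the only equalities I will substitute below.

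For the supremum side, I would invoke modularity (Definition~\ref{Modular_Trellis}) for the comparable pair $x \unlhd y$, taking $x$ and $y$ to play the roles of the related elements and choosing the free parameter to be $z$. This gives
\[
x \join (z \meet y) = (x \join z) \meet y .
\]
Substituting $z \meet y = y \meet z = y$ collapses the left-hand side to $x \join y = y$, so $(x \join z) \meet y = y$; by commutativity $y \meet (x \join z) = y$, which is exactly $y \unlhd x \join z$. Symmetrically, for the infimum side I would apply modularity to the pair $y \unlhd z$ with free parameter $x$, obtaining
\[
y \join (x \meet z) = (y \join x) \meet z ,
\]
and substituting $y \join x = x \join y = y$ together with $y \meet z = y$ reduces the right-hand side to $y$, giving $(x \meet z) \join y = y$, i.e.\ $x \meet z \unlhd y$.

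The only genuine decision in the proof is the matching step just described: which instance of modularity to use, and which element to supply as the free parameter. Once that is settled, everything reduces to plain substitution of the absorption-derived identities, so I do not expect any real obstacle beyond the discipline of never invoking the unavailable relation $x \unlhd z$.
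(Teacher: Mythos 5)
Your proof is correct. Note that the paper itself does not prove this proposition at all: it is quoted from Skala's trellis theory \cite{Skala1971trellis} and used as a known result, so there is no internal argument to compare against. Your derivation stands on its own: the equivalence $a \unlhd b \iff a \meet b = a \iff a \join b = b$ does follow from commutativity and absorption, both of your modularity instances are applied only to the genuinely comparable pairs $x \unlhd y$ and $y \unlhd z$ (correctly avoiding the unavailable relation $x \unlhd z$, which non-transitivity forbids), and the substitutions collapse each identity to exactly the claimed inequalities $y \unlhd x \join z$ and $x \meet z \unlhd y$. This is a clean, self-contained proof of the cited result.
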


\begin{proposition}{\rm \cite{Skala1971trellis}}\label{Interesting_implication_modular_trellis}
Let $(X,\unlhd,\meet,\join,0,1)$ be a bounded modular trellis. If $x \unlhd z$ and $x \join y=1$, then $x\meet y \unlhd z$, for any  $x,y,z \in X$.
\end{proposition}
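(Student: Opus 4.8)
The plan is to reduce the claim to two applications of the modular law and then to bridge a non-transitive chain; I expect the last of these to be the real difficulty. Throughout I use that $u \unlhd v$ is equivalent to $u \meet v = u$ and to $u \join v = v$, so that the target $x \meet y \unlhd z$ is the same as the identity $(x \meet y) \join z = z$.

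First I would use the hypothesis $x \join y = 1$. Since $x \unlhd z$, the modular law (Definition~\ref{Modular_Trellis}) applies with $x$ below $z$ and free element $y$, and together with $1 \meet z = z$ (which holds because $z \unlhd 1$) it gives
\[
x \join (y \meet z) = (x \join y) \meet z = 1 \meet z = z .
\]
Thus $z = x \join (y \meet z)$, and in particular $y \meet z \unlhd z$. Next I would relate $x \meet y$ to $y \meet z$. As $y \meet z \unlhd y$, the modular law applies once more, now with $y \meet z$ below $y$ and free element $x$; using the identity just obtained (and commutativity) to simplify $(y \meet z) \join x = z$, this yields
\[
(y \meet z) \join (x \meet y) = \bigl( (y \meet z) \join x \bigr) \meet y = z \meet y = y \meet z ,
\]
so that $x \meet y \unlhd y \meet z$.

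The hard part will be to deduce $x \meet y \unlhd z$ from $x \meet y \unlhd y \meet z$ and $y \meet z \unlhd z$: this is precisely where a trellis differs from a lattice, since $\unlhd$ need not be transitive and the conclusion is therefore not automatic. To close this gap I would apply Proposition~\ref{two famous implications on modular trellis} to the chain $x \meet y \unlhd y \meet z \unlhd z$, obtaining $(x \meet y) \meet z \unlhd y \meet z$ and $y \meet z \unlhd (x \meet y) \join z$, and then combine these with the representation $z = x \join (y \meet z)$ from the first step to pin down the supremum $(x \meet y) \join z$ and force it to equal $z$. The essential point --- and the step I expect to require the most care --- is that modularity must be invoked a final time to exclude the genuinely non-transitive configuration; it is exactly the hypothesis $x \join y = 1$, through the identity $z = x \join (y \meet z)$, that makes this possible, in contrast with the lattice case where the statement is a triviality that uses neither modularity nor the join condition.
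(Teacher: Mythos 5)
The paper never proves this proposition: it is quoted from Skala's \emph{Trellis theory} \cite{Skala1971trellis} with no argument given, so there is no in-paper proof to compare against; what follows assesses your proposal on its own terms.

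Your first two steps are correct and are genuinely the right preparation: modularity at $x \unlhd z$ plus $x \join y = 1$ gives $z = x \join (y \meet z)$, and modularity at $y \meet z \unlhd y$ with free element $x$ gives $(x\meet y)\join(y\meet z) = z \meet y = y\meet z$, i.e.\ $x \meet y \unlhd y \meet z$. The gap sits exactly where you predicted it, but the repair you sketch does not close it. Write $s = (x\meet y)\join z$; the goal $x\meet y \unlhd z$ is equivalent to $s \unlhd z$, since $z \unlhd s$ is automatic. Applying Proposition~\ref{two famous implications on modular trellis} to the chain $x\meet y \unlhd y\meet z \unlhd z$ yields $(x\meet y)\meet z \unlhd y\meet z$ and $y\meet z \unlhd s$; but these, like $z \unlhd s$ and $x \meet y \unlhd s$, are all \emph{lower} bounds on $s$, and no combination of lower bounds with the identity $z = x\join(y\meet z)$ can produce the needed \emph{upper} bound $s \unlhd z$. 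Indeed, the only generic way to bound the least upper bound $s$ above by $z$ is to show that $z$ is an upper bound of $\{x\meet y,\, z\}$ --- which is precisely the statement being proved, so this route is circular.

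The proof can be completed, but by a different final maneuver. (a) Apply Proposition~\ref{two famous implications on modular trellis} to the chain $x\meet y \unlhd x \unlhd z$ (not to $x\meet y \unlhd y\meet z \unlhd z$), obtaining $x \unlhd (x\meet y)\join z = s$. (b) Apply modularity at $x\meet y \unlhd y$ with free element $z$: $s \meet y = ((x\meet y)\join z)\meet y = (x\meet y)\join(z\meet y) = (x\meet y)\join(y\meet z) = y \meet z$, the last equality being your step 2. (c) Apply modularity at $x \unlhd s$ with free element $y$, which together with the hypothesis $x \join y = 1$ gives $s = (x\join y)\meet s = x \join (y\meet s) = x \join (y\meet z) = z$, by (b) and your step 1. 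Hence $(x\meet y)\join z = z$, i.e.\ $x\meet y \unlhd z$. So your instinct that modularity and the hypothesis $x\join y=1$ must be invoked one final time is correct, but the concrete path you propose for that final step would not succeed; the missing idea is to compute $s$ itself via modularity (through $x \unlhd s$ and $y \meet s = y \meet z$) rather than to accumulate elements lying below it.
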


 \subsection{Specific elements in a trellis}
In this subsection, we present some specific elements in a trellis that will play an important role in this paper.

\begin{definition}\cite{Skala1972trellis}
Let $(X,\unlhd,\meet,\join)$ be a trellis. An element $\alpha \in X$ is called:
\begin{enumerate}[label=(\roman*)]  
\item \textit{right-transitive}, if $\alpha \unlhd x \unlhd y$ implies  $\alpha \unlhd y$, for any $x,y \in X$; 
\item \textit{left-transitive}, if $x \unlhd y \unlhd \alpha$ implies $x \unlhd \alpha$, for any $x,y \in X$;
\item \textit{middle-transitive}, if $x \unlhd \alpha \unlhd y$ implies $x \unlhd y$, for any $x,y \in X$;
\item \textit{transitive}, if it is right-, left- and middle-transitive.
\end{enumerate}
\end{definition}

\begin{definition}\cite{Skala1972trellis}
Let $(X,\unlhd,\meet,\join)$ be a trellis. 
\begin{enumerate}[label=(\roman*)]  
\item A 3-tuple $(x,y,z) \in X^3$ is called  $\meet$-\textit{associative} (resp.\ $\join$-\textit{associative}), if $(x \meet y )\meet z = x\meet(y \meet z)$ (resp.\ $(x\join y)\join z = x\join(y \join z)$);
\item An element  $\alpha \in X$ is called $\meet$-associative (resp.\ $\join$-associative), if any 3-tuple in $X$ including $\alpha$ is $\meet$-associative (resp.\ $\join$-associative); 
 \item An element $\alpha\in X$ is called \textit{associative} if it is both $\meet$- and $\join$-associative. 
\end{enumerate}
\end{definition}

Note that for the different notions of associative element $\alpha \in X$, due to the commutativity of the meet and the join operations it is 
sufficient to consider only 3-tuples if the type $(\alpha,x,y)$.
 
The following results show the links between the above notions. 

\begin{proposition}{\rm \cite{Skala1972trellis}}\label{associative element is transitive}
Let $(X,\unlhd,\meet,\join)$ be a trellis. Any $\meet$-associative or $\join$-associative element is transitive, but the converse does not hold.
\end{proposition}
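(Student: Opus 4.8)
The plan is to prove the forward implication by a direct computation from the meet/join characterization of $\unlhd$, and then to refute the converse with an explicit finite trellis. For the $\meet$-associative case I would use that $a\unlhd b$ is equivalent to $a\meet b=a$, together with the fact that $\alpha$ being $\meet$-associative means every $3$-tuple containing $\alpha$ is $\meet$-associative, and check the three transitivity conditions in turn. For right-transitivity, assuming $\alpha\unlhd x\unlhd y$, i.e.\ $\alpha\meet x=\alpha$ and $x\meet y=x$, I get $\alpha\meet y=(\alpha\meet x)\meet y=\alpha\meet(x\meet y)=\alpha\meet x=\alpha$, so $\alpha\unlhd y$. For left-transitivity, assuming $x\unlhd y\unlhd\alpha$, i.e.\ $x\meet y=x$ and $y\meet\alpha=y$, applying $\meet$-associativity to the triple $(x,y,\alpha)$ gives $x\meet\alpha=(x\meet y)\meet\alpha=x\meet(y\meet\alpha)=x\meet y=x$, so $x\unlhd\alpha$. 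For middle-transitivity, assuming $x\unlhd\alpha\unlhd y$, i.e.\ $x\meet\alpha=x$ and $\alpha\meet y=\alpha$, I get $x\meet y=(x\meet\alpha)\meet y=x\meet(\alpha\meet y)=x\meet\alpha=x$, so $x\unlhd y$. Hence $\alpha$ is right-, left-, and middle-transitive, i.e.\ transitive.

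The $\join$-associative case then follows by the order-dual argument: using $a\unlhd b\iff a\join b=b$ and repeating the three computations with $\meet$ replaced by $\join$ (for instance, right-transitivity reads $\alpha\join y=\alpha\join(x\join y)=(\alpha\join x)\join y=x\join y=y$), or, more slickly, by invoking the duality between a trellis and its order-dual, under which $\join$-associativity of $\alpha$ becomes $\meet$-associativity in the dual while transitivity is self-dual. To then show that transitivity does \emph{not} imply associativity, I would exhibit a finite bounded trellis, presented by a Hasse-type diagram, containing an element $\alpha$ that is transitive yet fails to be $\meet$-associative (and, for completeness, also fails to be $\join$-associative): concretely, I would take a non-lattice trellis, so that $\unlhd$ is globally non-transitive, in which every comparability passing through $\alpha$ nonetheless extends, and then display one triple $(\alpha,x,y)$ with $(\alpha\meet x)\meet y\neq\alpha\meet(x\meet y)$ together with a triple witnessing the failure of $\join$-associativity; the required (in)equalities can be read off directly from the pairwise meets and joins of the diagram.

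The main obstacle is this converse: the forward direction is a short, essentially mechanical substitution, but producing the counterexample requires a psoset that is simultaneously a genuine trellis (all pairwise meets and joins existing and unique), properly non-transitive, and equipped with a distinguished element whose local transitivity survives while associativity breaks. Keeping $\alpha$ transitive while forcing a bracketing mismatch in some triple through $\alpha$ is the delicate point, since the two obstructions pull in opposite directions, and I would confirm the final example by checking the relevant meet/join tables on the elements involved.
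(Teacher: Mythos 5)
Your forward implication is correct and complete. In a trellis $a\unlhd b$ is indeed equivalent to $a\meet b=a$ (and to $a\join b=b$), and applying $\meet$-associativity of $\alpha$ to the triples $(\alpha,x,y)$, $(x,y,\alpha)$, $(x,\alpha,y)$ gives right-, left- and middle-transitivity exactly as you compute; the $\join$-case is the order dual. Note that the paper itself gives no proof of this proposition at all (it is quoted from Skala's monograph), so for this half there is nothing to compare against: your mechanical substitution is the standard argument and it is valid.

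The genuine gap is the converse. The clause ``but the converse does not hold'' is an existence claim, and your proposal never exhibits the required trellis: you describe the kind of example you would search for and you yourself flag its construction as ``the delicate point'', but no witness is produced, so half of the statement remains unproved. Moreover, the intuition that is blocking you --- that transitivity of $\alpha$ and an associativity failure at $\alpha$ ``pull in opposite directions'' --- is misleading: the bracketing mismatch in a triple through $\alpha$ is typically caused by a transitivity failure at the \emph{other} elements of the triple, not at $\alpha$ itself, so the two requirements are not in tension. In fact the paper's own Example~\ref{example psoset} (Fig.~\ref{Fig01}) already contains a witness for the $\join$-half: the element $c$ there is transitive, yet $(b\join c)\join e=d\join e=e$ while $b\join(c\join e)=b\join e=f$, precisely because $b\unlhd d\unlhd e$ but $b\ntrianglelefteq e$; hence a transitive element need not be $\join$-associative, and the order-dual hexagon refutes the $\meet$-version. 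If you want a single element that is transitive and fails \emph{both} (the literal negation of the converse of the disjunctive statement), glue the hexagon to its order dual through $c$: take $X=\{0,e',d',b',c,b,d,e,1\}$ with $0$ and $1$ as bounds, $e'\unlhd d'\unlhd b'$, $d'\unlhd c$, $e'\unlhd c$, $e'\ntrianglelefteq b'$, $b'\unlhd b,d$, $b'\ntrianglelefteq e$, $c\unlhd d\unlhd e$, $b\unlhd d$, $b\ntrianglelefteq e$, with $b'\,\|\,c$ and $b\,\|\,c$. One checks this is a bounded trellis in which $c$ is transitive, while $(b,c,e)$ breaks $\join$-associativity ($e\neq 1$) and $(b',c,e')$ breaks $\meet$-associativity ($e'\neq 0$). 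Finally, Theorem~\ref{Ass=tr} tells you in advance that any such example must be neither a pseudo-chain nor modular, which is a useful sanity check on whatever trellis you propose.
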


\begin{theorem}{\rm \cite{Skala1972trellis}}\label{Ass=tr}
Let $(X,\unlhd,\meet,\join)$ be a pseudo-chain or a modular trellis. Then it holds that an element is associative if and only if it is transitive. 
\end{theorem}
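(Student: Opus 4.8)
The \emph{only if} part carries no content beyond what is already available: if $\alpha$ is associative, then in particular it is $\meet$-associative, and Proposition~\ref{associative element is transitive} immediately gives that $\alpha$ is transitive. Notice this needs neither the pseudo-chain nor the modular hypothesis. So the whole substance is the converse, and by the commutativity of $\meet$ and $\join$ (and the remark that only triples of type $(\alpha,x,y)$ matter) it suffices, for a transitive $\alpha$, to prove
\[
(\alpha\meet x)\meet y=\alpha\meet(x\meet y)\qquad\text{and}\qquad(\alpha\join x)\join y=\alpha\join(x\join y)
\]
for all $x,y\in X$. The two identities are dual, so I would prove the meet identity and obtain the join identity by the order-reversing duality $\unlhd\leftrightarrow\unrhd$, $\meet\leftrightarrow\join$ (which exchanges left- and right-transitivity and fixes middle-transitivity).

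For the pseudo-chain case the key preliminary step is to show that a transitive element of a pseudo-chain is $\unlhd$-comparable to \emph{every} element. Given $x$, the pseudo-chain hypothesis yields $x\lesssim\alpha$ or $\alpha\lesssim x$; unwinding a defining sequence $x\unlhd x_{1}\unlhd\cdots\unlhd x_{n}\unlhd\alpha$ and applying left-transitivity of $\alpha$ repeatedly from the top downward collapses it to $x\unlhd\alpha$, and symmetrically right-transitivity turns $\alpha\lesssim x$ into $\alpha\unlhd x$. With comparability in hand the meet identity follows from a four-way case split according to whether $x\unlhd\alpha$ or $\alpha\unlhd x$ and whether $y\unlhd\alpha$ or $\alpha\unlhd y$: in each case one of $\alpha\meet x$, $\alpha\meet y$, $x\meet y$ collapses to an argument, middle-transitivity supplies the missing comparability between $x$ and $y$ whenever $\alpha$ sits between them, and both sides reduce to the same element (one of $\alpha$, $x$, $y$, $x\meet y$). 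This case analysis is short and purely order-theoretic.

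The modular case is the real obstacle, since there a transitive element need not be comparable to all others, so the clean case split is unavailable. Here I would argue through the greatest-lower-bound characterisation: show that $(\alpha\meet x)\meet y$ and $\alpha\meet(x\meet y)$ both equal the infimum of $\{\alpha,x,y\}$, whence antisymmetry forces them to coincide. The easy inequalities $(\alpha\meet x)\meet y\unlhd\alpha$ and $(\alpha\meet x)\meet y\unlhd y$ (and $\alpha\meet(x\meet y)\unlhd\alpha$) come almost for free, the first by left-transitivity of $\alpha$ applied to $(\alpha\meet x)\meet y\unlhd\alpha\meet x\unlhd\alpha$. The genuinely hard points are the \emph{cross} inequalities that would be automatic in a lattice but fail transitivity here, namely $(\alpha\meet x)\meet y\unlhd x$ and $\alpha\meet(x\meet y)\unlhd x,y$: to force a meet past the non-transitive intermediates $\alpha\meet x$ and $x\meet y$, I would invoke the modular law of Definition~\ref{Modular_Trellis} together with Proposition~\ref{two famous implications on modular trellis}, feeding in the chains $(\alpha\meet x)\meet y\unlhd\alpha\meet x\unlhd x$ and $\alpha\meet(x\meet y)\unlhd x\meet y\unlhd x$ and using the transitivity of $\alpha$ to replace an $\alpha\meet x$-type intermediate by $\alpha$ precisely where modularity lets the meet distribute. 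I expect the delicate bookkeeping to be choosing which instance of the modular identity to apply at each step and verifying that it is the transitivity of $\alpha$ (and not of the auxiliary meets, which are not transitive) that unlocks the distribution; once both expressions are shown to be lower bounds of $\{\alpha,x,y\}$ dominating every common lower bound, equality follows, and the join identity is obtained by duality, completing the proof.
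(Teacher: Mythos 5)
The paper itself contains no proof of Theorem~\ref{Ass=tr} --- it is quoted verbatim from Skala \cite{Skala1972trellis} --- so your attempt must stand on its own. The only-if part and the pseudo-chain case are sound: Proposition~\ref{associative element is transitive} disposes of the forward direction, your unwinding of $\lesssim$ by left- and right-transitivity does make a transitive element comparable to everything, and the four-way case split closes (one small correction: the case $x\unlhd\alpha$, $y\unlhd\alpha$ needs \emph{left}-transitivity, to get $x\meet y\unlhd x\unlhd\alpha$ collapsed to $x\meet y\unlhd\alpha$; middle-transitivity only handles the mixed cases). The genuine gap is in the modular case, and it sits exactly inside the sentence where you defer the ``delicate bookkeeping'': the two cross inequalities are not parallel, and the technique you describe works for only one of them. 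For $u=(\alpha\meet x)\meet y\unlhd x$ your plan can indeed be completed, but it takes two specific applications of Definition~\ref{Modular_Trellis}, not a generic invocation: left-transitivity gives $u\unlhd\alpha$; the modular law with $u\unlhd\alpha$ gives $(u\join x)\meet\alpha=u\join(x\meet\alpha)=x\meet\alpha$ (the collapse is legitimate here \emph{because} $u\unlhd\alpha\meet x$ by hypothesis); middle-transitivity gives $u\unlhd x\join\alpha$, hence $u\join x\unlhd x\join\alpha$; then the modular law with $x\unlhd u\join x$ yields $u\join x=x\join\bigl(\alpha\meet(u\join x)\bigr)=x\join(x\meet\alpha)=x$, i.e.\ $u\unlhd x$. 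None of these steps appears in your sketch, and Proposition~\ref{two famous implications on modular trellis} alone never produces them.

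For the other cross inequality, $v=\alpha\meet(x\meet y)\unlhd x$, the argument above cannot even start, and your guiding phrase --- ``using the transitivity of $\alpha$ to replace an $\alpha\meet x$-type intermediate by $\alpha$'' --- has nothing to attach to: in the chain $v\unlhd x\meet y\unlhd x$ the intermediate $x\meet y$ does not involve $\alpha$ at all. Concretely, the first move of the scheme requires collapsing $v\join(x\meet\alpha)$ to $x\meet\alpha$, which needs $v\unlhd x\meet\alpha$; given $v\unlhd\alpha$, that is \emph{equivalent} to the goal $v\unlhd x$, so the argument is circular. The same circularity reappears under every instantiation of the modular law or of Proposition~\ref{two famous implications on modular trellis} fed with the chains you name: each either returns a triviality or assumes the goal. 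This inequality is the real content of Skala's theorem in the modular case (everything else in your infimum framework --- that any common lower bound of $\{\alpha,x,y\}$ lies below both expressions --- is free from the definition of meet), and the proposal contains no idea for proving it; as written, the modular half restates the problem rather than solving it.
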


\begin{theorem}\cite{gladstien1973characterization}\label{Every cycle has a meet and join}
A trellis of finite length is complete if and only if every cycle has the meet and the join.
\end{theorem}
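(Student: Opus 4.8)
The statement splits into two implications. The forward one is immediate: a cycle is in particular a subset of $X$, so if the trellis is complete then every cycle, like every subset, has an infimum and a supremum, i.e.\ a meet and a join. For the converse I would exploit the duality between $\meet$ and $\join$ and reduce everything to a single claim: if every cycle has a meet and a join, then every subset $A\subseteq X$ has a supremum; the existence of infima then follows by the order-dual argument, and completeness is exactly the conjunction of the two. Throughout I would lean on the elementary remark that the class of upper bounds is closed under existing meets: if $w=\bigwedge S$ exists for some set $S$ of upper bounds of $A$, then each $a\in A$ is a lower bound of $S$, hence $a\unlhd w$, so $w$ is again an upper bound of $A$. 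In particular binary meets of upper bounds, and meets of cycles lying inside the set of upper bounds, stay among the upper bounds (and dually for joins and lower bounds).

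The difficulty is not the existence of \emph{some} upper bound but of a \emph{least} one, and this is exactly where non-transitivity forces the cycle hypothesis into play. Since $\unlhd$ need not be transitive, the set $U$ of upper bounds of $A$, viewed as a sub-psoset, may contain cycles --- finite families of pairwise $\lesssim$-equivalent but $\unlhd$-incomparable elements --- and such a family has no $\unlhd$-minimal element, so finite length by itself cannot deliver a smallest upper bound. To control this I would collapse cycles by passing to the equivalence $\approx$ given by $x\approx y$ when $x\lesssim y$ and $y\lesssim x$; its nontrivial classes are precisely the cycles, and the relation induced by $\lesssim$ on the quotient $U/{\approx}$ is transitive and antisymmetric, hence a genuine order. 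Because a strictly ascending chain of classes would lift, through connecting $\unlhd$-paths, to an unbounded ascending configuration in $X$, finite length is inherited by the quotient, so $U/{\approx}$ has minimal (and maximal) classes.

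With this in hand the argument becomes a short induction on the finite-length quotient. A minimal class of $U/{\approx}$ cannot be a cycle $C$: by hypothesis $\bigwedge C$ exists, it is an upper bound of $A$ by the remark above, and it lies \emph{strictly} below $C$ (were $\bigwedge C\in C$ it would be a $\unlhd$-least element of $C$, which a cycle lacks), so its class is strictly below that of $C$, contradicting minimality. Hence every minimal class is a singleton $\{m\}$. If two distinct minimal classes $\{m_{1}\},\{m_{2}\}$ existed, then $m_{1}\meet m_{2}\in U$ would sit $\lesssim$-below both, placing its class strictly below two minimal classes --- impossible; so the minimal class is unique. Finally, for any $u\in U$ the element $m\meet u\in U$ satisfies $m\meet u\unlhd m$, so its class is $\lesssim$-below the unique minimal class and hence equals it; as that class is the singleton $\{m\}$, this forces $m\meet u=m$, that is $m\unlhd u$. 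Thus $m$ is the least upper bound, $m=\bigvee A$.

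To finish, the dual of this whole argument (collapsing cycles by their joins, and using binary joins for uniqueness) produces $\bigwedge A$ for every $A$; applied to the maximal and minimal classes of $X$ itself, the same reasoning yields a greatest and a least element, so the sets of bounds are never empty and the trellis is complete. The step I expect to be genuinely delicate is the bridge of the middle two paragraphs between $\unlhd$ and its transitive closure $\lesssim$: one must verify that the nontrivial $\approx$-classes really are the cycles of the paper, that finite length transfers faithfully to the quotient, and --- most importantly --- that the meet of a cycle of upper bounds (available only through the standing hypothesis) is a \emph{strict} lower bound of that cycle, so that the induction both terminates and, via the final $m\meet u=m$ computation, upgrades a mere $\lesssim$-least upper bound to a true $\unlhd$-least one. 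The remaining bookkeeping is routine once this non-transitive/transitive interface is secured.
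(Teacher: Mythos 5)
First, a contextual note: the paper does not prove this theorem at all --- it is quoted from Gladstien's 1973 paper --- so there is no internal proof to compare against; your attempt has to stand on its own, and it does not. Your forward implication is fine, and the skeleton of the converse (quotient the set $U$ of upper bounds by mutual $\lesssim$-equivalence, find a unique minimal singleton class, show it is the least element of $U$) is a sensible plan. But the two claims you defer as ``delicate but routine bookkeeping'' are both false as stated, and they are exactly where the content of the theorem lies. The first is the claim that nontrivial $\approx$-classes are cycles. They are not: a nontrivial class is a strongly connected component of $(X,\unlhd)$, which is a union of cycles but need not be a single cycle. Concretely, take two $3$-cycles $a \unlhd b \unlhd c \unlhd a$ and $a \unlhd d \unlhd e \unlhd a$ sharing the vertex $a$, add $0$ and $1$, and declare all remaining pairs incomparable. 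One checks this $7$-element psoset is a trellis (e.g.\ $b\wedge d=a$, $c\vee e=a$, $b\vee d=1$, $c\wedge d=0$), and its only cycles are the two triangles; yet $\{a,b,c,d,e\}$ is a single $\approx$-class and is not a cycle, since a circuit through all five elements would need $a$ as the predecessor of both $b$ and $d$. So the hypothesis never hands you $\bigwedge C$ for a minimal nontrivial class $C$. The natural repair --- pick a cycle $C_0$ inside the class and form $w_0=\bigwedge C_0$ --- does not close the argument either: minimality of the class only forces $w_0$ back \emph{into} the class (it cannot lie in $C_0$, but it can lie elsewhere in the component), and iterating produces a descending walk $w_0 \rhd w_1 \rhd \cdots$ which is not a chain, so finite length gives no contradiction (the walk may even close up into a new cycle).

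The second deferred claim, that ``finite length is inherited by the quotient,'' also fails, because length in this theory is measured by chains, which are transitive totally ordered subsets, whereas $\unlhd$-walks need not be chains. Take elements $y_1,y_2,\ldots$ with only the consecutive relations $y_{i+1}\unlhd y_i$, plus $0$ and $1$: every pair of non-consecutive $y$'s has meet $0$ and join $1$, so this is a trellis, its longest chains are of the form $\{0,y_{i+1},y_i,1\}$ (length $3$), and it has no cycles at all; yet its quotient by $\approx$ is the psoset itself and contains the infinite chain $[y_1]>[y_2]>\cdots$. So your appeal to finite length does not produce minimal classes of $U/\approx$, and your lifting argument (walks in $X$ obtained by concatenating connecting paths) bounds nothing, for the same chain-versus-walk reason. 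In the examples above minimal classes happen to exist, but for reasons your argument does not supply; producing a minimal class of $U/\approx$ and showing it is a singleton is essentially the whole theorem, and filling these two holes requires genuinely new ideas --- which is what Gladstien's paper actually provides.
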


\section{Pseudo-triangular norms on  bounded trellises}
This section is devoted to introduce the notions of pseudo-triangular norm  on a bounded trellis and to investigate their various properties and to present some interesting examples in bounded trellises. These notions are inspired from triangular norms and triangular conorms on  bounded lattices and bounded trellises (see, e.g., \cite{de1999triangular,de1994order,karacal2017aggregation,klement2005logical}). Also, we provide a construction to obtain new pseudo-t-norms  on bounded trellises. In particular, we give necessary and sufficient conditions under which a pseudo-t-norm on a bounded trellis coincides with its meet ($\wedge$) operation.

\subsection{Binary operations on trellises}
In this subsection, we present some basic definitions and properties of binary operations on a psoset or trellis. 
Some of them are adopted from the corresponding notions on a poset or lattice (see, e.g.,~\cite{fried1973some,karaccal2006direct,yettou1}).
A binary operation $F$ on a psoset $(X,\unlhd)$
is called:
\begin{enumerate}[label=(\roman*)]
\item \textit{commutative}, if $F(x,y)=F(y,x)$, for any $x,y \in X$;
\item \textit{associative}, if $F(x,F(y,z))=F(F(x,y),z)$, for any $x,y,z \in X$;
\item \textit{idempotent}, if $F(x,x)=x$, for any $x \in X$;
\item \textit{increasing}, if $x \unlhd y$  implies   $F(x,z) \unlhd F(y,z)$, for any $z \in X$.
\end{enumerate} 

 An element $e\in X$ is called a \textit{neutral element} of $F$, if $F(e,x)=F(x,e)=x$, for any $x \in X$.

A binary operation $F$ on a trellis $(X,\unlhd,\meet ,\join)$
is called:
\begin{enumerate}[label=(\roman*)]
\item \textit{conjunctive}, if $F(x,y)\unlhd x\meet y$, for any $x,y \in X$;
\item \textit{disjunctive}, if $x\join y \unlhd F(x,y)$, for any $x,y \in X$.
\end{enumerate}

\begin{remark}
Consider a trellis $(X,\unlhd ,\meet ,\join )$.Then the meet $\meet$ (resp.\ join $\join$) is conjunctive (resp.\  disjunctive). 
\end{remark}

 \begin{notation}
Let $(X,\unlhd,\meet,\join)$ be a trellis. We denote  by:
\begin{enumerate}[label=(\roman*)] 
\item $X^{tr}$ : the set of all transitive elements of $X$;
\item $X^{\wedge-ass}$:  the set of all $\meet$-associative elements of $X$;
\item  $X^{\vee-ass}$:  the set of all $\join$-associative elements of $X$;
\item  $X^{ass}$: the set of all associative elements of $X$;
\item  $X^{dis}$:  the set of all distributive elements of $X$.
\end{enumerate}  
\end{notation}

\begin{notation}\label{}
Let  $(X,\unlhd,\wedge,\vee)$ be a trellis, $A \in X$ and $x_1,\cdots , x_n \in X$, for any $n \geq 1$. If $\{x_1,\cdots , x_n\} \cap A \neq \emptyset $, then we said that  $[x_1,\cdots , x_n ] \in A$.
\end{notation}

The following proposition is immediate.

\begin{proposition}\label{the meet and the join are p-increasing}
Let $(X,\unlhd ,\wedge ,\vee )$ be a trellis. Then it holds that 
\begin{enumerate}[label=(\roman*)]
\item $x \unlhd y \text{ implies }  x \wedge z \unlhd y \wedge z \text{ and } z \wedge x \unlhd z \wedge y, \text{ for any } ([x,y] \in X^\tr \text{ and }  z \in X);$
\item $x \unlhd y \text{ implies }  x \vee z \unlhd y \vee z \text{ and } z \vee x \unlhd z \vee y, \text{ for any } ([x,y] \in X^\tr \text{ and }  z \in X);$
\item $(x \wedge  y )\wedge  z= x \wedge ( y \wedge z ) \text{, for any } ([x,y,z ] \in X^{\wedge-ass} \text{ or } [x,y,z ]\in X^{\wedge-ass})\ ;$
\item $(x \vee  y )\vee  z= x \vee ( y \vee z ) \text{, for any } ([x,y,z ] \in X^{\wedge-ass} \text{ or } [x,y,z ]\in X^{\vee-ass})\ .$
\end{enumerate}
\end{proposition}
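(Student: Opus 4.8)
The plan is to prove the two order-monotonicity statements (i) and (ii) from the defining universal property of $\meet$ and $\join$ as infimum and supremum, and to obtain the two associativity statements (iii) and (iv) directly from the definition of an associative element together with the commutativity reduction recorded after that definition. Throughout I keep in mind that $[x,y]\in X^{\tr}$ asserts only that \emph{at least one} of $x,y$ is transitive, so each of (i) and (ii) splits into two sub-cases.

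For (i), fix $x\unlhd y$ with $[x,y]\in X^{\tr}$ and let $z\in X$ be arbitrary. Since $y\meet z$ is the greatest lower bound of $\{y,z\}$, it suffices to show that $x\meet z$ is a lower bound of $\{y,z\}$. Now $x\meet z$ is a lower bound of $\{x,z\}$, so $x\meet z\unlhd z$ holds automatically, and it remains to establish $x\meet z\unlhd y$. Here I would chain $x\meet z\unlhd x\unlhd y$ and apply the appropriate transitivity of the element that lies in $X^{\tr}$: if $x$ is transitive, its middle-transitivity turns $x\meet z\unlhd x\unlhd y$ into $x\meet z\unlhd y$, while if $y$ is transitive, its left-transitivity does the same. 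This yields $x\meet z\unlhd y\meet z$, and the companion inequality $z\meet x\unlhd z\meet y$ is immediate from commutativity of $\meet$.

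Item (ii) is the order-dual. Since $x\join z$ is the least upper bound of $\{x,z\}$, to get $x\join z\unlhd y\join z$ it suffices that $x\unlhd y\join z$ and $z\unlhd y\join z$; the second holds automatically and for the first I would chain $x\unlhd y\unlhd y\join z$, using right-transitivity of $x$ when $x\in X^{\tr}$ and middle-transitivity of $y$ when $y\in X^{\tr}$. Commutativity of $\join$ then gives $z\join x\unlhd z\join y$.

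For (iii) and (iv) the reasoning is essentially definitional. If one of $x,y,z$ is $\meet$-associative, then the $3$-tuple $(x,y,z)$ contains a $\meet$-associative element and is therefore a $\meet$-associative tuple, which is exactly the identity $(x\meet y)\meet z=x\meet(y\meet z)$; should the associative element occupy the second or third slot, I would first transport it to the first slot using the commutativity remark following the definition of associative element. Statement (iv) is proved identically with $\join$ replacing $\meet$ and a $\join$-associative element replacing the $\meet$-associative one. I expect no conceptual obstacle: the only care needed is the bookkeeping in (i) and (ii), namely selecting the correct flavour (left-, middle-, or right-transitive) to match the position of the transitive element in the relevant chain, which is precisely why the transitivity hypothesis cannot be dropped.
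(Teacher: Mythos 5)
Your proofs of (i), (ii) and (iii) are correct, and they are in fact the only proof on offer: the paper itself gives no argument at all, declaring the proposition ``immediate.'' For (i) and (ii) your reduction is exactly right -- show that $x\meet z$ is a lower bound of $\{y,z\}$ (resp.\ that $y\join z$ is an upper bound of $\{x,z\}$) and close the chain $x\meet z\unlhd x\unlhd y$ (resp.\ $x\unlhd y\unlhd y\join z$) using middle-transitivity of $x$ or left-transitivity of $y$ (resp.\ right-transitivity of $x$ or middle-transitivity of $y$), depending on which of the two elements the hypothesis $[x,y]\in X^{\tr}$ provides. For (iii), note only that the ``transport to the first slot'' step is superfluous: the definition of a $\meet$-associative element already quantifies over \emph{all} $3$-tuples containing it, in any position, so the identity is literally the definition.

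The genuine problem is (iv). As printed, (iv) asserts $(x\join y)\join z=x\join(y\join z)$ whenever $[x,y,z]\in X^{\meetAss}$ \emph{or} $[x,y,z]\in X^{\joinAss}$; your proof covers only the second disjunct and silently drops the first. That omission cannot be repaired, because the first disjunct is false. Take the paper's own Example 2.1 (Figure 1): from the meet table one checks directly that $c$ is $\meet$-associative (meets of $c$ with $a$ or $b$ equal $a$, meets of $c$ with $c,d,e,f$ equal $c$, and all relevant products collapse accordingly), so $[b,c,e]\in X^{\meetAss}$; yet $(b\join c)\join e=d\join e=e$ while $b\join(c\join e)=b\join e=f\neq e$. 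So either (iv) contains a typo -- quite plausible, since the condition in (iii) is printed with the duplicated disjunct ``$X^{\meetAss}$ or $X^{\meetAss}$'', suggesting a copy-paste slip, with the intended hypothesis of (iv) being just $[x,y,z]\in X^{\joinAss}$, in which case your proof is complete -- or the statement is false as written. A blind proof should have flagged this explicitly instead of proving a silently corrected claim. The discrepancy is not cosmetic: the paper's later definition of weak associativity uses precisely this disjunctive hypothesis, and the counterexample above shows that $\join$ fails to be weakly associative (hence fails to be a pseudo-t-conorm) on that very trellis, so the intended scope of (iv) matters downstream.
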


Next, we extends the increasingness and associativity properties of  the meet and the join  operations on bounded trellises and leads the following definition. 

\begin{definition}
Let  $(X,\unlhd,\wedge,\vee)$ be a trellis and $F$  a binary operation on $X$.
\begin{enumerate}[label=(\roman*)]  
\item $F$ is called weakly-increasing if it satisfies: 
$$ x \unlhd y \Rightarrow  F(x,z) \unlhd F(y,z), \text{ for any } ([x,y] \in X^\tr \text{ and }  z \in X);$$
\item $F$ is weakly-associative if it satisfies: 
$$F(x,F(y,z)) = F(F(x,y),z)  \text{, for any } ([x,y,z ] \in X^{\wedge-ass} \text{ or } [x,y,z ]\in X^{\vee-ass})\ .$$
\end{enumerate}
\end{definition}
%\begin{definition}\label{}
%Let  $(X,\unlhd,\wedge,\vee)$ be a trellis and $F$  a binary operation on $X$.
%\begin{enumerate}[label=(\roman*)]  
%\item $F$ is called $X^{tr}$-increasing if satisfy: 
%$$(\forall (x, y) \in X\times X^{tr}) (x \unlhd y \Rightarrow  F(x,z) \unlhd F(y,z), ~\forall z \in X);$$
%%\item $F$ is right-$X^{tr}$-increasing ($X^{tr}_{r} $-increasing, for short) if: 
%%$$(\forall (x, y) \in X\times X^{tr}) (x \unlhd y \text{ implies }  F(z,x) \unlhd F(z,y))(\forall z \in X);$$
%\item $F$ is called $X_{tr}$-increasing if satisfy:
%$$(\forall (x, y) \in X^{tr} \times X) (x \unlhd y \Rightarrow  F(x,z) \unlhd F(y,z), ~\forall z \in X);$$
%\item $F$ is associative with respect to a $\wedge$-associative element (${\wedge}$-associative, for short) if satisfy:
% $$F(x,F(y,z)) = F(F(x,y),z)  \text{, for any } [x,y,z ] \in X^{\wedge-ass}\ ;$$
%\item $F$ is associative with respect to a $\vee$-associative element (${\vee}$-associative, for short) if satisfy:
% $$F(x,F(y,z)) = F(F(x,y),z) \text{, for any } [x,y,z ] \in X^{\vee-ass}\ .$$
%\end{enumerate}
%\end{definition}

Next, we illustrate the previous definition weakly-increasing and weakly-associative    operations on a bounded trellis.

\begin{example}
Let  $(X=\{0,a,b,c,1\}, \unlhd, \wedge , \vee)$ be a trellis given by the Hasse  diagram in Figure~\ref{Fig4.1} and $F,G$  two binary operations defined by the following tables:
\begin{center}
\begin{tabular}{|c|c|c|c|c|c|}
\hline 
$F(x,y)$ & $0$ & $a$ & $b$ & $c$ & $1$ \\ 
\hline 
$0$ & $a$ & $a$ & $b$ & $c$ & $1$ \\ 
\hline 
$a$ & $b$ & $b$ & $c$ & $c$ & $1$ \\ 
\hline 
$b$ & $b$ & $b$ & $c$ & $c$ & $1$ \\ 
\hline 
$c$ & $c$ & $1$ & $1$ & $1$ & $1$ \\ 
\hline 
$1$ & $1$ & $1$ & $1$ & $1$ & $1$ \\ 
\hline 
\end{tabular}$~~~~$
\begin{tabular}{|c|c|c|c|c|c|}
\hline 
$G(x,y)$ & $0$ & $a$ & $b$ & $c$ & $1$ \\ 
\hline 
$0$      & $0$ & $0$ & $0$ & $0$ & $0$ \\ 
\hline 
$a$ 	 & $0$ & $a$ & $b$ & $c$ & $1$ \\ 
\hline 
$b$		 & $0$ & $a$ & $c$ & $c$ & $1$ \\ 
\hline 
$c$ 	 & $0$ & $0$ & $1$ & $b$ & $c$ \\ 
\hline 
$1$ 	 & $0$ & $a$ & $c$ & $c$ & $1$ \\ 
\hline 
\end{tabular}
\end{center}

\vspace{0.3cm} % A bit of extra whitespace between the header and poster content

One  easily verifies that $F$ is weakly-increasing and $G$ is weakly-associative. 

\begin{figure}[H]

\[\begin{tikzpicture}
\tikzstyle{estun}=[>=latex,thick,dotted]
    \vertex[fill] (0) at (0,0)  [label=right:$0$]  {};
    \vertex[fill] (a) at (0,0.5)  [label=right:$a$]  {};
    \vertex[fill] (b) at (0,1)  [label=right:$b$]  {};
    \vertex[fill] (c) at (0,1.5)  [label=right:$c$]  {};
    \vertex[fill] (1) at (0,2)  [label=right:$1$]  {};
   
    \path
        (0) edge (a)
        (a) edge (b)
        (b) edge (c)
        (c) edge (1)
        
        ;
   \draw[estun] (a) to [bend left] (c)   
      
        ;
        
\end{tikzpicture}\]

\caption{Hasse  diagram of the trellis $(X=\lbrace 0,a,b,c,1 \rbrace,\unlhd)$.}\label{Fig4.1}
\end{figure}
\end{example}

\newpage
\subsection{Triangular norms  on  bounded trellises}
In this subsection, we introduce the notion of  triangular norm and on a bounded trellis and we present some illustrative examples.

\begin{definition}\label{t-norm}
Let $(X,\unlhd,\wedge,\vee,0,1)$ be a bounded trellis. A binary operation $T: X^2 \to X$ is called a triangular norm (t-norm, for short), if it is commutative, increasing, associative
and has $1$ as neutral element, i.e., $T(1, x)=x$, for any $x \in X$.
\end{definition}

Analogously, we define a triangular conorm on a bounded trellis.

\begin{definition}\label{t-conorm}
Let $(X,\unlhd,\wedge,\vee,0,1)$ be a bounded trellis. A binary operation $S: X^2 \to X$ is called a pseudo-triangular conorm (pseudo-t-conorm, for short), if it is commutative, increasing, associative and has $0$ as neutral element, i.e., $S(0, x)=x$, for any $x \in X$.
\end{definition}

\begin{example}{\ }\label{T_D and S_D}
Let $(X,\unlhd,\wedge,\vee,0,1)$ be a bounded trellis and $T_D$ a binary operation defined on $X$ as follow:\\
$T_D(x,y)=\left\{\begin{array}{ll}{x \wedge y} & {\text { if } x =1 ~\text{or}~ y=1},\\
 {0} & {\text { otherwise, }}\end{array}\right.$
and~
$S_D(x,y)=\left\{\begin{array}{ll}{x \vee y} & {\text { if } x =0 ~\text{or}~ y=0},\\
 {1} & {\text { otherwise. }}\end{array}\right.$

One  easily verifies that $T_D$  is the smallest t-norm and $S_D$ is the greatest t-conorm  on $X$.
\end{example}

\begin{example} 
 Let $(X=\{0,a,b,c,1\}, \unlhd, \wedge , \vee)$ be a bounded trellis given by the Hasse-type  diagram in Figure~\ref{Fig4.1}  and $T$  a binary operation defined by the following table:\\

\begin{center}
\begin{tabular}{|c|c|c|c|c|c|}
\hline 
$T(x,y)$ &$0$ & $a$& $b$ & $c$ & $1$ \\ 
\hline 
$0$    &$0$ & $0$& $0$ & $0$ & $0$ \\  
\hline  
$a$	   &$0$ & $0$& $0$ & $0$ & $a$ \\  
\hline 
$b$	   &$0$ & $0$& $b$ & $b$ & $b$ \\ 
\hline 
$c$    &$0$ & $0$& $b$ & $c$ & $c$ \\ 
\hline 
$1$    &$0$ & $a$& $b$ & $c$ & $1$ \\ 
\hline 
\end{tabular}$~~~~$
\end{center}
One easily verifies that $T$ is a t-norm on $X$ such that $T_D \vartriangleleft T$. 
\end{example} 

\begin{example}{\ }
 Let $(X,\unlhd,\wedge,\vee,0,1)$ be a bounded trellis such that for any $i \in$ $Coatom(X)$ and $j \in$ $Atom(X)$, the binary operations $T_i$ and $S_j$ defined as:
\begin{center}
$T_i(x,y)=\left\{\begin{array}{ll}{i} & {\text { if } (x,y)=(i,i)},\\
 {T_D(x,y)} & {\text { otherwise, }}\end{array}\right.$
\end{center}
% and   
%$S_j(x,y)=\left\{\begin{array}{ll}{j} & {\text { if } (x,y)=(j,j)},\\
% {S_D(x,y)} & {\text { otherwise. }}\end{array}\right.$
 
is a t-norm on $X$. Moreover, $T_D \vartriangleleft T_i$.
\end{example}
\newpage
\begin{notation}
Let $(X,\unlhd ,\wedge ,\vee )$ be a bounded trellis. We denote by:
\begin{enumerate}[label=(\roman*)]
\item $\mathcal{AO}_1(X)$:  the class (or the set) of all t-norms on $ X$;
\item $\mathcal{AO}_0(X)$:  the class (or the set) of all t-conorms on $ X$.
\end{enumerate}
\end{notation}

\begin{remark}{\ }\label{existence idempotent t-norm on bounded trellis}
In a bounded trellis  $(X,\unlhd,\wedge,\vee,0,1)$ wish has not a structure of a bounded lattice, there is not exist any idempotent t-norm on $X$.
\end{remark}

It is natural that the trellis structure has cycles and  dashed curves at the same time. In the following results, we  present relationship among cycles and t-norms. First, we need the following definition of atom and coatom on a trellis. This definition is a natural generalization of the same notions  on a lattice (see, e.g.~\cite{karaccal2006direct}).
\begin{definition}
Let $(X,\unlhd,\wedge,\vee,0,1)$ be a bounded trellis. An element $\alpha \in X$ is called:
\begin{enumerate}[label=(\roman*)]  
\item \textit{atom},  if it is a minimal element of the set $X \backslash\{0\}$;
\item \textit{coatom}, if it is a maximal element of the set $X \backslash\{1\}$.
\end{enumerate}
We denote by $Atom(X)$ (resp.\ $Coatom(X)$), the set of all atoms (resp.\ coatoms) of $X$.
\end{definition}

The following propositions are immediate.

\begin{proposition}
For a given bounded trellis $(X,\unlhd,\wedge,\vee,0,1)$ has a non-trivial cycle $C$. Then $\{0,1\} \notin C$.
\end{proposition}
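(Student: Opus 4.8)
The plan is to prove the contrapositive, or rather to argue directly that the smallest and greatest elements cannot participate in any non-trivial cycle. First I would make precise the notion of a \emph{non-trivial cycle} $C$ in the trellis: a cycle is a sequence of distinct elements $x_1 \lhd x_2 \lhd \cdots \lhd x_k \lhd x_1$ (with respect to the covering relation of the Hasse-type diagram) that is not reducible to a single edge, so $k \geq 3$. The claim $\{0,1\} \notin C$ should be read through the notation convention introduced above: it asserts that neither $0$ nor $1$ appears among the vertices $x_1,\dots,x_k$ of $C$.

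The key step is to exploit the defining property of the bounds, namely $0 \unlhd x \unlhd 1$ for every $x \in X$. Suppose for contradiction that $1 \in C$, say $1 = x_i$ for some index $i$. In a cycle each vertex has a predecessor and a successor along the covering edges, so there is some $x_{i-1} \lhd x_i = 1$ and some $1 = x_i \lhd x_{i+1}$. But $1$ is the greatest element, so $x_{i+1} \unlhd 1$ forces $x_{i+1} = 1$ by antisymmetry, contradicting that $1 \lhd x_{i+1}$ means $1 \neq x_{i+1}$ while $1 \unlhd x_{i+1}$ already holds. Hence $1$ can only sit at the \emph{top} of the diagram and cannot have a strict upper cover inside the cycle, so $1$ cannot close a cycle. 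A symmetric argument, using that $0$ is the smallest element and therefore admits no strict lower cover, rules out $0 \in C$.

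The only genuine subtlety is bookkeeping about what an edge of a cycle means in a Hasse-type diagram where $\unlhd$ is not transitive: I would confirm that a cycle traverses covering edges (solid edges in the diagram), so that each internal vertex genuinely has both an incoming and an outgoing covering edge. Once that is pinned down, the argument is just the observation that $1$ is a covered-by-nothing element and $0$ is a covers-nothing element, both by the boundedness axiom. I expect the main (and really the only) obstacle to be stating the definition of a cycle carefully enough that ``has a successor'' and ``has a predecessor'' are justified for every vertex of $C$; the algebraic content is immediate from antisymmetry and the extremality of $0$ and $1$.
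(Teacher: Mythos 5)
Your proof is correct and is precisely the ``immediate'' argument the paper intends: the paper states this proposition without proof (it appears under ``The following propositions are immediate''), and your combination of antisymmetry with the boundedness condition $0 \unlhd x \unlhd 1$ is the expected justification. One small remark: the paper's cycles (as used in its later proofs) are sequences $x_{1} \unlhd x_{2} \unlhd \cdots \unlhd x_{n} \unlhd x_{1}$ rather than covering-edge paths in the Hasse-type diagram, but your argument never actually uses the covering property---only that each vertex of a non-trivial cycle has a strict successor and predecessor (merging equal adjacent entries if necessary)---so it applies verbatim to the paper's notion of cycle.
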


\begin{proposition}
Let $(X,\unlhd,\wedge,\vee,0,1)$ be a bounded trellis. If $X$ has a non-trivial cycle $C$, then $x \notin Atom(X) \cup Coatom(X)$, for any $x \in C$.
\end{proposition}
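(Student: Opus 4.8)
The plan is to argue by contradiction, exploiting the fact that every element of a non-trivial cycle sits strictly between a predecessor and a successor that are themselves cycle elements, so it can be neither minimal in $X\setminus\{0\}$ nor maximal in $X\setminus\{1\}$. First I would fix a non-trivial cycle $C$ and invoke the previous proposition, which guarantees $0\notin C$ and $1\notin C$; this is the only external ingredient the argument needs. Next I would extract from the definition of a non-trivial cycle the adjacency data: for each $x\in C$ there is a predecessor $p\in C$ with $p\unlhd x$ and a successor $s\in C$ with $x\unlhd s$, where $p\neq x$ and $s\neq x$ because a non-trivial cycle has genuinely distinct consecutive vertices (antisymmetry of $\unlhd$ rules out a collapse of two adjacent vertices). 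Writing this as $p\lhd x\lhd s$ with $p,s\in C$ is the whole content of the setup.

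With this structure in hand the two cases are dual. Suppose $x\in C$ were an atom, i.e.\ a minimal element of $X\setminus\{0\}$. Its predecessor satisfies $p\lhd x$, and since $p\in C$ the previous proposition gives $p\neq 0$, so $p\in X\setminus\{0\}$; thus $p$ is an element of $X\setminus\{0\}$ lying strictly below $x$, contradicting the minimality of $x$ in $X\setminus\{0\}$. Dually, suppose $x\in C$ were a coatom, i.e.\ maximal in $X\setminus\{1\}$. Its successor satisfies $x\lhd s$ with $s\in C$, hence $s\neq 1$ by the previous proposition, so $s\in X\setminus\{1\}$ lies strictly above $x$, contradicting the maximality of $x$ in $X\setminus\{1\}$. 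Combining the two cases yields $x\notin Atom(X)\cup Coatom(X)$ for every $x\in C$, which is the claim.

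I expect the only delicate point to be the second step, namely reading off the predecessor/successor $p\lhd x\lhd s$ from the definition of a non-trivial cycle, since everything afterwards is a one-line comparison against the defining minimality/maximality of atoms and coatoms. Once the cyclic adjacency is in place, the previous proposition is exactly what upgrades $p\neq 0$ to $p\in X\setminus\{0\}$ and $s\neq 1$ to $s\in X\setminus\{1\}$, closing both cases. Notably the argument is purely order-theoretic: it uses neither the meet $\meet$ nor the join $\join$, nor any modularity or completeness hypothesis.
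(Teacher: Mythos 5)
Your proof is correct and matches the intended argument: the paper states this proposition without proof (it appears under ``The following propositions are immediate''), and your predecessor/successor contradiction, combined with the preceding proposition that $0,1\notin C$, is exactly the natural filling-in. The only inessential slip is attributing the distinctness of consecutive cycle vertices to antisymmetry of $\unlhd$; that distinctness is simply part of what a non-trivial cycle means in this paper (distinct elements $x_1,\ldots,x_n$, $n\geq 3$, with $x_1\unlhd x_2 \unlhd \cdots \unlhd x_n\unlhd x_1$), whereas antisymmetry is only what forces a two-element cycle to collapse.
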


\begin{proposition}
Let $(X,\unlhd,\wedge,\vee)$ be a modular trellis. Then $X$ does not have any non-trivial cycle.
\end{proposition}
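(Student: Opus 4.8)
The plan is to argue by contradiction, using the modular ``betweenness'' property of Proposition~\ref{two famous implications on modular trellis} as the main engine, since that is precisely where modularity restrains the non-transitivity of $\unlhd$. Assume the modular trellis $(X,\unlhd,\wedge,\vee)$ carries a non-trivial cycle, i.e.\ a finite list of pairwise distinct elements with $c_1\lhd c_2\lhd\cdots\lhd c_n\lhd c_1$; since antisymmetry already forbids $n=2$, we have $n\geq 3$. Among all such cycles I would fix one of \emph{minimal} length $n$, as this minimality is what lets me discard every possible ``shortcut'' and thereby force a collision with antisymmetry.

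The heart of the argument is the case $n=3$, which I would settle first and then reduce the general case to it. Suppose $c_1\lhd c_2\lhd c_3\lhd c_1$. Applying Proposition~\ref{two famous implications on modular trellis} to the chain $c_1\unlhd c_2\unlhd c_3$ gives $c_1\wedge c_3\unlhd c_2\unlhd c_1\vee c_3$. But the closing edge of the cycle is $c_3\unlhd c_1$, whence $c_1\wedge c_3=c_3$ and $c_1\vee c_3=c_1$, so these inequalities collapse to $c_3\unlhd c_2\unlhd c_1$. In particular $c_3\unlhd c_2$, and together with the cycle edge $c_2\unlhd c_3$ antisymmetry forces $c_2=c_3$, contradicting distinctness. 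Hence a modular trellis admits no $3$-cycle.

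For a minimal cycle of length $n\geq 4$ I would test three consecutive vertices $c_1\unlhd c_2\unlhd c_3$ against the comparability of $c_1$ and $c_3$. If $c_1\unlhd c_3$ then $c_1\lhd c_3\lhd c_4\lhd\cdots\lhd c_n\lhd c_1$ is a cycle of length $n-1\geq 3$, contradicting minimality; if $c_3\unlhd c_1$ then $c_1\lhd c_2\lhd c_3\lhd c_1$ is a $3$-cycle, excluded above. The surviving possibility, that $c_1$ and $c_3$ are incomparable (neither $c_1\unlhd c_3$ nor $c_3\unlhd c_1$), is the crux, and I expect it to be \emph{the main obstacle}, since neither shortcut is then available. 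My intended handling is to run Proposition~\ref{two famous implications on modular trellis} around the entire loop: from each consecutive triple $c_{i-1}\unlhd c_i\unlhd c_{i+1}$ one gets $c_{i-1}\wedge c_{i+1}\unlhd c_i$, and chasing these second-neighbour meets around the cycle should force the alternating meets to coincide in a single element that is a common lower bound of all the $c_i$ (dually for the joins). Feeding this extremal element, together with one cycle edge such as $c_n\unlhd c_1$, back into the modular identity of Definition~\ref{Modular_Trellis} is where I would try to squeeze out an equality $c_i=c_{i+1}$. The genuinely delicate point is that this last step must truly collapse an edge rather than merely reproduce a self-consistent ``diamond'' configuration; making it work is exactly what distinguishes a modular trellis from one satisfying only weak associativity, and it is also where the precise meaning of \emph{non-trivial} (for instance, that the cycle fails to possess a meet or a join, in the sense of Theorem~\ref{Every cycle has a meet and join}) will have to be used to exclude the incomparable loops that a bare length count cannot.
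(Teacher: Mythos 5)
Your $3$-cycle argument is correct and coincides in substance with the paper's entire proof. The paper takes a non-trivial cycle $x \unlhd y \unlhd z \unlhd x$ and applies the modular identity directly to the edge $z \unlhd x$: since $y \unlhd z$ and $x \unlhd y$, it computes $(z \vee y) \wedge x = z \wedge x = z$ and $z \vee (y \wedge x) = z \vee x = x$, and modularity equates these two expressions, forcing $z = x$ and then $x = y = z$ by antisymmetry. Your route through Proposition~\ref{two famous implications on modular trellis} is an equivalent repackaging of that same computation, so up to this point you and the paper agree.

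The obstacle you flagged for $n \geq 4$ is real, and it cannot be overcome: the statement is false for longer cycles, and the paper's proof never addresses them (it silently assumes every non-trivial cycle has exactly three elements). Concretely, take $X = \{0,a,b,c,d,1\}$ with $0 \unlhd x \unlhd 1$ for all $x$, cycle edges $a \lhd b \lhd c \lhd d \lhd a$, and $a \parallel c$, $b \parallel d$. Every pair has a meet and a join ($a\wedge c = b\wedge d = 0$, $a\vee c = b\vee d = 1$, adjacent pairs use the cycle edges, e.g.\ $d \wedge a = d$ and $d \vee a = a$), so this is a bounded trellis. It is modular: the identity $x \vee (y\wedge z) = (x\vee y)\wedge z$ for $x \unlhd z$ is immediate when $x=0$, $z=1$, or $x=z$, and for the edge $(x,z)=(a,b)$ one checks all six values of $y$ directly; the only nontrivial cases are $y=c$, where both sides equal $b$, and $y=d$, where both sides equal $a$ (because $d\wedge b=0$ and $a\vee d=a$); the rotation $a\mapsto b\mapsto c\mapsto d\mapsto a$ is an automorphism of this trellis, so the remaining three edges follow. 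Thus a modular trellis can contain the non-trivial $4$-cycle $a \lhd b \lhd c \lhd d \lhd a$, and this example realizes exactly your ``surviving possibility'': second neighbours are incomparable, your alternating meets all collapse to $0$ (and the joins to $1$), and every instance of the modular law is satisfied, so nothing can be squeezed out. Your instinct that this was the genuinely delicate point was correct — it is precisely where the proposition itself breaks down; the $3$-cycle case you completed is the most that can be proved, and it is also all the paper actually proves.
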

\begin{proof}{\ }
Let $C=\{x,y,z\}$ be a non-trivial cycle on $X$ such that  $x \unlhd y \unlhd z \unlhd x$. Since $z \unlhd x$, it holds from the modularity of $X$ that $(z \vee y) \wedge x=z \wedge x= z$ and $z \vee (y \wedge x)=z \vee x= x$. Then $x=z$.  Consequently, $x=y=z$, contradiction. Hence, $X$ does not have any non-trivial cycle.
\end{proof}

\begin{proposition}
Let $(X,\unlhd,\wedge,\vee)$ be a trellis has a non-trivial cycle $C$. Then it holds that  
\begin{center}
$C \cap \{ X^{r-tr} \cup  X^{\ell -tr} \} = \emptyset$.
\end{center}
\end{proposition}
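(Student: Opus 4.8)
The plan is to fix an arbitrary element $\alpha \in C$ and to show that $\alpha$ can be neither right-transitive nor left-transitive, so that no element of $C$ belongs to $X^{r-tr} \cup X^{\ell-tr}$. First I would pin down the structure of a non-trivial cycle: writing $C = \{z_0, z_1, \ldots, z_{n-1}\}$ with $z_0 \unlhd z_1 \unlhd \cdots \unlhd z_{n-1} \unlhd z_0$, the antisymmetry of $\unlhd$ forces the elements $z_i$ to be pairwise distinct and excludes the cases $n \le 2$ (a two-step loop $z_0 \unlhd z_1 \unlhd z_0$ would collapse to $z_0 = z_1$); hence any non-trivial cycle has length $n \ge 3$. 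After relabelling I may assume that the chosen $\alpha$ equals $z_0$, so that the cycle reads $\alpha \unlhd z_1 \unlhd \cdots \unlhd z_{n-1} \unlhd \alpha$ with every $z_i$ distinct from $\alpha$.

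For the right-transitive case I would argue by contradiction, assuming $\alpha \in X^{r-tr}$ and propagating the relation $\alpha \unlhd (\cdot)$ forward around the cycle. From $\alpha \unlhd z_1 \unlhd z_2$ right-transitivity yields $\alpha \unlhd z_2$; substituting this into $\alpha \unlhd z_2 \unlhd z_3$ gives $\alpha \unlhd z_3$, and a straightforward induction delivers $\alpha \unlhd z_{n-1}$. Combining $\alpha \unlhd z_{n-1}$ with the closing relation $z_{n-1} \unlhd \alpha$ and invoking antisymmetry forces $\alpha = z_{n-1}$, which contradicts the distinctness secured in the first step (for $n = 3$ a single application of right-transitivity already suffices).

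The left-transitive case is completely dual, so I would handle it by the symmetric iteration, this time propagating the relation $(\cdot) \unlhd \alpha$ backward along the cycle. Assuming $\alpha \in X^{\ell-tr}$, the chain $z_{n-2} \unlhd z_{n-1} \unlhd \alpha$ gives $z_{n-2} \unlhd \alpha$, then $z_{n-3} \unlhd \alpha$, and so on down to $z_1 \unlhd \alpha$. Together with the opening relation $\alpha \unlhd z_1$ and antisymmetry this yields $\alpha = z_1$, again a contradiction. As $\alpha$ was an arbitrary element of $C$, the two cases together give $C \cap (X^{r-tr} \cup X^{\ell-tr}) = \emptyset$.

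I expect the only delicate point to be the structural bookkeeping in the first paragraph, namely the rigorous justification that the elements of a non-trivial cycle are pairwise distinct with $n \ge 3$, since the entire contradiction rests on $z_{n-1} \neq \alpha$ (respectively $z_1 \neq \alpha$) together with a clean statement of the inductive propagation. Once this is in place, both cases reduce to routine applications of the definitions of right- and left-transitivity combined with antisymmetry, so I anticipate no further obstacle.
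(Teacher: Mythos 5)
Your proof is correct and takes essentially the same route as the paper's: assume an element of the cycle is right- (resp.\ left-) transitive, propagate the relation step by step around the cycle, and apply antisymmetry to force an equality that contradicts the non-triviality of $C$. The only cosmetic difference is that you stop at the first forced equality ($\alpha = z_{n-1}$, resp.\ $\alpha = z_1$), whereas the paper continues the collapse to conclude $|C|=1$; both yield the required contradiction.
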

\begin{proof}{\ }
Let $C=\{ x_{1}, \cdots, x_{n}\}$ be a non-trivial cycle such that $x_{1} \unlhd x_2 \unlhd \cdots \unlhd  x_{n} \unlhd x_1$ and  $n \geq 3$. Suppose that $x_i \in X^{r-tr}$, for some $i \in \{1, \cdots , n\}$. Then $ x_i \unlhd x_{i-1}$. Since $ x_{i-1} \unlhd x_i$, it holds that $x_{i}= x_{i-1}$. Consequently, $x_{i}= x_{i-1}= x_{i-2}=\cdots =x_1=x_n=\cdots = x_{i+1}$. Thus, $|C|=1$, a contradiction. Hence, $C \cap X^{r-tr}  = \emptyset$. In similar way, $C \cap X^{\ell -tr}  = \emptyset$. Therefore, $C \cap \{ X^{r-tr} \cup  X^{\ell -tr} \} = \emptyset$.
\end{proof}

\begin{proposition}\label{the set of all  non-trivial cycles not coincide with a t-norm}
Let $(X,\unlhd,\wedge,\vee)$ be a trellis and $\{ C_i \}_{i \in I}$ is the set of all  non-trivial cycles of three elements of $X$. If $T$ is a t-norm on $X$, then $T(x,y) \notin C_i$, for any $x,y \in C_i$ and $i\in I$ and . Moreover,  $T(x,y)=T(z,t)$, for any $x,y,z,t \in C_i$ and $i\in I$ (i.e., $\bigcup_{a,b \in C_i} T(a,b)$ is a trivial cycle).
\end{proposition}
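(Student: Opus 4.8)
The plan is to exploit the defining properties of a t-norm---commutativity, increasingness, associativity, and the neutral element $1$---together with the combinatorial structure of a three-element non-trivial cycle, to trap the image $T(x,y)$ between the cycle's elements and force a contradiction unless the image collapses to a single point lying outside the cycle.

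First I would fix a non-trivial cycle $C_i=\{a,b,c\}$ with $a \unlhd b \unlhd c \unlhd a$ (and pairwise distinct, since non-trivial), and I would record the basic consequence of the cyclic relations under increasingness: applying $T(\cdot,z)$ to $a \unlhd b$ gives $T(a,z)\unlhd T(b,z)$ for every $z$, and cycling through all three relations yields a cyclic chain of inequalities among the nine values $T(x,y)$, $x,y\in C_i$. The key first observation is that because $1$ is neutral and increasing forces $T(x,y)\unlhd T(1,y)=y$ (taking $x\unlhd 1$), every value $T(x,y)$ satisfies $T(x,y)\unlhd x$ and $T(x,y)\unlhd y$ simultaneously; thus $T(x,y)$ is a common lower bound of $x$ and $y$. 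I would then argue that if $T(x,y)$ were itself an element of $C_i$, say $T(x,y)=a$, combining $T(x,y)\unlhd x$ and $T(x,y)\unlhd y$ with the cyclic relations among $a,b,c$ would produce a two-way relation $u\unlhd v$ and $v\unlhd u$ between distinct cycle elements, contradicting antisymmetry; this establishes the first claim, $T(x,y)\notin C_i$.

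For the second claim---that $T$ is constant on $C_i\times C_i$---I would use associativity crucially. Since the values $T(x,y)$ all lie below $a$, $b$, and $c$, and since applying $T$ once more with any cycle element should, by the cyclic symmetry and increasingness, not increase the value, I would show that all nine products are forced to be equal by a squeezing argument: the cyclic chain of inequalities obtained in the first step is in fact a cycle of inequalities $T(a,a)\unlhd T(a,b)\unlhd\cdots\unlhd T(a,a)$ (or a similar closed loop), and a closed loop of $\unlhd$-inequalities among elements of an antisymmetric relation forces all entries to coincide. Associativity is what lets me relate products of different pairs, e.g.\ expressing $T(a,c)$ and $T(b,c)$ through a common intermediate value and using the neutral element to pin them together. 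The parenthetical remark that $\bigcup_{a,b\in C_i}T(a,b)$ is a trivial cycle then follows immediately, since a single repeated value is a one-element (trivial) cycle.

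The main obstacle I anticipate is the second claim rather than the first: establishing the constancy of $T$ on the cycle requires carefully chaining the increasingness inequalities into a genuinely \emph{closed} loop so that antisymmetry can collapse them, and the failure of transitivity in a trellis means I cannot freely compose relations---I must be sure every inequality I use comes directly from increasingness applied to one of the three given cyclic relations, never from an unjustified transitive step. In particular the proof must avoid silently assuming $x\unlhd y\unlhd z \Rightarrow x\unlhd z$, and I expect the delicate point is to arrange the chain of nine values so that each consecutive $\unlhd$ is individually licensed by a single application of increasingness to one edge of the cycle, thereby producing the closed loop legitimately.
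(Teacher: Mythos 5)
Your proposal has a genuine gap, and it is the same misconception in both halves: you assume that antisymmetry collapses closed loops of $\unlhd$. For the second claim you build the chain $T(a,a)\unlhd T(a,b)\unlhd T(a,c)\unlhd T(a,a)$ (each step is indeed correctly licensed by increasingness applied to a single edge of the cycle) and then assert that ``a closed loop of $\unlhd$-inequalities among elements of an antisymmetric relation forces all entries to coincide.'' This is false in a pseudo-ordered set: antisymmetry only collapses loops of length two, and a closed $\unlhd$-loop through three distinct elements is precisely a non-trivial cycle, which a trellis is allowed to contain --- it is the very phenomenon this proposition is about. So after forming your loops, the three values $T(a,a),T(a,b),T(a,c)$ could a priori be distinct and constitute a \emph{new} non-trivial cycle; ruling that out is the real content of the statement, not a formality. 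The paper's proof confronts exactly this possibility: it introduces a second non-trivial cycle $C'=\{\alpha,\beta,\gamma\}$ assumed to contain at least two of the values, derives $\alpha=\beta=\gamma$ (a contradiction with non-triviality), and only then concludes that each loop is trivial, after which commutativity glues the constant values obtained for different fixed arguments. Your proposal skips this step entirely, so the second claim is not proved; your own closing caution about transitivity shows you sensed the danger, but the fix you propose is itself the forbidden transitive collapse.

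The first claim has a smaller but still real gap. Conjunctivity ($T(x,y)\unlhd x$ and $T(x,y)\unlhd y$) plus antisymmetry rules out only those pairs involving $c$: if $T(x,y)=a$ and, say, $x=c$, then $a\unlhd c$ together with $c\unlhd a$ gives $a=c$. The surviving cases $T(a,a)=a$, $T(a,b)=a$ and $T(b,b)=a$ are fully consistent with conjunctivity and the cyclic relations, so no ``two-way relation between distinct cycle elements'' arises and your stated mechanism yields no contradiction. These cases require propagation by increasingness around the cycle, which is exactly the case analysis in the paper's proof: for instance, from $T(a,b)=a$ and $b\unlhd c$ one gets $a=T(a,b)\unlhd T(a,c)\unlhd a$ (the last inequality by conjunctivity), hence $T(a,c)=a$ by antisymmetry --- a legitimate two-element loop --- hence $a\unlhd c$, contradicting $c\unlhd a$; the case $T(b,b)=a$ is harder still and needs the same kind of cycle-exclusion argument as the second claim. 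Conjunctivity alone cannot replace this analysis.
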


\begin{proof}{\ }
 Let  $C=\{x,y,z\} \subseteq \bigcup_{i \in I} C_i$ be a non-trivial cycle such that $x \unlhd y \unlhd z \unlhd x$. Suppose that $a,b \in C$ such that $T(a,b)=x$. Since $T$ is a t-norm, it holds that $T(a,b)= x \unlhd a $ and  $T(a,b)= x \unlhd b $. Thus, $a \in \{x, y\}$ and $b \in \{x, y\}$. Then the commutativity of $T$  implies that two cases:
 \begin{enumerate}[label=(\roman*)]  
\item If $a=x$ and $b=y$, then $T(a,b)=T(x,y)=x$. Since $T(x,y) \unlhd T(y,z)$, it follows two possible cases:
 \begin{enumerate}[label=(\roman*)]  
\item If $T(y,z)=x$, then $T(y,z)=x \unlhd T(1,z)=z$. Thus, $x \unlhd z$. Hence, $x=y=z$.
\item If $T(y,z)=y$, then $T(y,z) \unlhd T(y,x)=T(x,y)=x$. Thus, $y \unlhd x$. Hence, $x=y=z$.
\end{enumerate}
\item If $a=b=x$, then $T(a,b)=T(x,x)=x$. Since $T(x,x) \unlhd T(y,y)$, it holds that two cases:
\begin{enumerate}[label=(\roman*)]  
\item If $T(y,y)=x$, then $T(x,x)= T(y,y)=x$. Since $T(x,x) \unlhd T(x,y) \unlhd T(y,y)$, it holds that $T(x,y)=x$, a contradiction. 
\item If $T(y,y)=y$, Since $T(x,x) \unlhd T(x,y) \unlhd T(y,y)$, it holds that $T(x,y)=x$ or $T(x,y)=y$. Thus,  $T(x,y)=x$ is a contradiction and $T(x,y)=y \unlhd T(x,1)=x$. Hence, $y \unlhd x$. Hence, $x=y=z$.
\end{enumerate}
\item If $a=b=y$, then $T(a,b)=T(y,y)=x$. Since $T(x,x) \unlhd T(y,y)$, it holds that $T(x,x)=x$ or $T(x,x)=z$. Thus,  $T(x,x)=x$ is a contradiction and $T(x,x)=z \unlhd T(x,y) \unlhd T(y,y)=x$ implies that $T(x,y)=x$ or  $T(x,y)=z$, a contradiction.
\end{enumerate}
In similar way, if $T(a,b)=y$ or $T(a,b)=z$, it follows that $ x=y=z$. Therefore,  $T(a,b) \notin C$, for any $a,b \in C$.  Next, let  $C'= \{\alpha , \beta , \gamma \}$ an other non-trivial cycle such that $| \bigcup_{a,b \in C} T(a,b) \cap C^{'} | \geq 2$. Suppose that  $T(x,x)=\alpha$, $T(y,y)=\beta$. Then  $T(z,z)=\gamma$. The fact that  $T$ is a t-norm implies  $T(x,x) \unlhd T(x,y) \unlhd T(y,y)$. Then it holds that two cases:
\begin{enumerate}[label=(\roman*)]  
\item If $T(x,y)= \alpha$, then from $T$ is a t-norm, it holds that $T(x,y) \unlhd T(y,z) \unlhd T(z,z)$. Thus, $T(y,z)= \beta$ . The fact that $T(y,z)=T(z,y) \unlhd T(x,y)$ implies 
 $T(x,y) = T(y,z)$. Then $\alpha =\beta$. Hence, $\alpha =\beta= \gamma$. 
\item If $T(x,y)= \beta$, then from $T$ is a t-norm, it holds that $T(x,y)=\beta  \unlhd T(x,z) \unlhd T(x,x)=\alpha$. Thus,  $T(x,z)=\gamma$. Since $T(x,z)=T(z,x) \unlhd T(x,y)$, it holds that $\gamma \unlhd \beta$. Hence,  $\alpha =\beta= \gamma$.
\end{enumerate}
In similar way, if $T(a,b)=\alpha$ and $T(c,d)=\beta$, for any $a,b,c,d \in C$, it follows that $\alpha =\beta= \gamma$. Hence, $\bigcup_{a,b \in C} T(a,b)$ is a trivial cycle (i.e., $T(x,y)=T(z,t)$, for any $x,y,z,t \in C$. 
\end{proof}

\begin{proposition}\label{t-norms with a cycles}
Let $(X,\unlhd,\wedge,\vee)$ be a trellis and $ C_n$ is a non-trivial cycle of $n$ elements of $X$. If $T$ is a t-norm on $X$, then $T(x,y) \notin C_n$, for any $x,y \in C_n$. Moreover,  $T(x,y)=T(z,t)$, for any $x,y,z,t \in C_n$ (i.e., $\bigcup_{a,b \in C_n} T(a,b)$ is a trivial cycle).
\end{proposition}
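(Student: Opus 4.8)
The plan is to argue by contradiction, paralleling the structure of Proposition~\ref{the set of all  non-trivial cycles not coincide with a t-norm} (the three-element case) but replacing its finite case-check with a propagation argument that runs around the whole cycle. Write $C_n=\{x_1,\dots,x_n\}$ with $x_1\unlhd x_2\unlhd\cdots\unlhd x_n\unlhd x_1$, all $x_i$ distinct and indices read modulo $n$. The first thing I would record is that every t-norm is \emph{conjunctive}: since $1$ is neutral and $T$ is increasing, $y\unlhd 1$ gives $T(x,y)=T(y,x)\unlhd T(1,x)=x$, so $T(a,b)\unlhd a$ and $T(a,b)\unlhd b$ for all $a,b\in X$.

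For the first assertion suppose $T(x_i,x_j)=x_k\in C_n$. Conjunctiveness forces $x_k\unlhd x_i$ and $x_k\unlhd x_j$, and since the only members of $C_n$ lying above $x_k$ are $x_k$ itself and its successor $x_{k+1}$, we get $i,j\in\{k,k+1\}$; this is the analogue of the three cases $(x,x),(x,y),(y,y)$ of the three-element proof. The engine of the argument is \emph{antisymmetric sandwiching}, which sidesteps the missing transitivity: from $x_k=T(x_k,x_k)\unlhd T(x_{k+1},x_k)\unlhd x_k$ one reads off $T(x_k,x_{k+1})=x_k$ by antisymmetry, each inequality being a single application of increasingness in one argument. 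I would use such sandwiches to pass the relation from one cyclic neighbour to the next, and so obtain the normalized relation $T(x_p,x_{p+1})=x_p$ at every index $p$. Associativity then propagates it across the cycle: by induction on $j$, $T(x_p,x_{p+j+1})=T\bigl(T(x_p,x_{p+j}),x_{p+j+1}\bigr)=T\bigl(x_p,T(x_{p+j},x_{p+j+1})\bigr)=T(x_p,x_{p+j})=x_p$, so $T(x_p,x_{p+j})=x_p$ for every $j\ge 1$. Taking $j=n-1$ gives $x_p=T(x_p,x_{p-1})\unlhd x_{p-1}$, which together with $x_{p-1}\unlhd x_p$ collapses $x_{p-1}=x_p$ and contradicts the non-triviality of $C_n$.

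For the ``moreover'' part I would first establish the one-value-per-cycle lemma that generalizes the second half of Proposition~\ref{the set of all  non-trivial cycles not coincide with a t-norm}: no non-trivial cycle of $X$ can contain two distinct values taken by $T$ on $C_n$, the proof again combining conjunctiveness, increasingness and antisymmetric sandwiching to collapse the offending cycle. Granting this, I would exploit that increasingness applied with one argument frozen turns the cyclic chain into a new closed chain: for fixed $w\in C_n$ the elements $T(x_1,w)\unlhd T(x_2,w)\unlhd\cdots\unlhd T(x_n,w)\unlhd T(x_1,w)$ form a cycle, which by the lemma must be trivial, so all $T(x_i,w)$ coincide; running the same argument in the second variable then forces $T(x,y)=T(z,t)$ for all $x,y,z,t\in C_n$, that is, $\bigcup_{a,b\in C_n}T(a,b)$ is a trivial cycle.

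The step I expect to be the real obstacle is the propagation around the cycle. For $n=3$ the finitely many configurations can be listed and checked by hand, but for general $n$ the local relation $T(x_p,x_{p+1})=x_p$ must be carried all the way around, and it is only the associativity of $T$ (unavailable coordinatewise, and impossible to recover by chaining non-transitive inequalities) that makes the induction $T(x_p,x_{p+j})=x_p$ go through. Keeping every inequality a single increasing step and every equality an antisymmetric sandwich, rather than a transitive chain, is what the whole argument hinges on.
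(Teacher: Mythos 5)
Your route is genuinely different from the paper's --- the paper argues by induction on the cycle length, using Proposition~\ref{the set of all  non-trivial cycles not coincide with a t-norm} as the base case and extending from $C_n$ to $C_{n+1}$, whereas you attempt one direct argument for all $n$ --- and several of your ingredients are sound, notably the one-step derivation of conjunctiveness $T(x,y)=T(y,x)\unlhd T(1,x)=x$ and the observation that $T(x_1,w)\unlhd\cdots\unlhd T(x_n,w)\unlhd T(x_1,w)$ is a closed chain. But the engine of your first part has a gap. A sandwich needs an already-known \emph{equality} to supply its lower bound, so from $T(x_k,x_{k+1})=x_k$ you can only propagate with one argument anchored at $x_k$: the sandwich $x_k=T(x_k,x_{k+j})\unlhd T(x_k,x_{k+j+1})\unlhd x_k$ gives $T(x_k,x_j)=x_k$ for every $j$, but nothing ever produces the lower bound $x_{k+1}\unlhd T(x_{k+1},x_{k+2})$ that your claimed ``normalized relation at every index $p$'' requires; the only derivable bounds trap $T(x_{k+1},x_{k+2})$ between the two \emph{distinct} elements $x_k$ and $x_{k+1}$, which antisymmetry cannot collapse. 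So the associativity induction, which rests entirely on that every-index claim, is unsupported. (It is also unnecessary: the anchored propagation already finishes these cases, since $j=n-1$ gives $x_k=T(x_k,x_{k-1})\unlhd x_{k-1}$, whence $x_{k-1}=x_k$, a contradiction.)

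The more serious hole is the third case of your own reduction, $T(x_{k+1},x_{k+1})=x_k$ (the analogue of the case $(y,y)$ in the three-element proof, which you list but never treat). There no sandwich closes: increasingness and conjunctiveness yield only estimates such as $x_k=T(x_{k+1},x_{k+1})\unlhd T(x_{k+1},x_{k+2})\unlhd x_{k+1}$ and $T(x_k,x_{k+1})\unlhd x_k$, i.e., values confined to intervals between distinct elements, never pinned to an element, and it is not at all clear that any combination of single increasing steps and antisymmetry can refute this configuration --- this is exactly the spot where the paper's own base-case proof resorts to the unjustified assumption that such intermediate values again lie in the cycle. A proof omitting this case is incomplete. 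The same objection applies to the ``moreover'' part: your one-value-per-cycle lemma is asserted with the remark that it follows by ``conjunctiveness, increasingness and antisymmetric sandwiching,'' but that is precisely the toolkit that fails above, so the lemma cannot be waved through, and without it your (correct) closed-chain observation yields nothing.
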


\begin{proof}{\ }
Let $T$ is a t-norm on $X$ and $C_3$ is anon-trivial cycles of three elements, then  Proposition ~\ref{the set of all  non-trivial cycles not coincide with a t-norm} guarantees that  $T(x,y) \notin C_3$, for any $x,y \in C_3$ and  $T(x,y)=T(z,t)$, for any $x,y,z,t \in C_3$. Suppose that $C_n=\{ x_{1}, \cdots, x_{n}\}$ be a non-trivial cycle such that $T(x,y) \notin C_n$, for any $x,y \in C_n$ and  $T(x,y)=T(z,t)$, for any $x,y,z,t \in C_n$.  Next, we prove that for $C_{n+1}=\{ x_{1}, \cdots, x_{n} ,x_{n+1} \}$. First, let $T(x,y) \in C_{n+1}$, for any $x,y \in C_{n}$. Since $T(x,y) \notin C_{n}$, for any $x,y \in C_{n}$, it holds that $T(x,y)=x_{n+1}$. Suppose that $x=y=x_n$, then $T(x,y)=T(x_n,x_n)=x_{n+1}$. Since $T$ is a t-norm, it holds that $T(x_n,x_n)  \unlhd x_n$. Thus, $x_n \unlhd x_{n+1} = T(x_n,x_n) \unlhd x_n$. Hence, $T(x_n,x_n) = x_n \in C_n$, a contradiction. Thus, $T(x,y) \notin C_{n+1}$, for any $x,y \in C_{n}$. Second, we prove that $T(x,y)=T(x_{n+1},z)=T(x_{n+1},x_{n+1})$, for any $x,y,z \in C_n$. On the one hand, since $T$ is a t-norm, it  follows that $T(x_{n},x_{n}) \unlhd T(x_{n+1},x_{n+1}) \unlhd T(x_{1},x_{1})=T(x_{n},x_{n})$. Then $T(x_{n+1},x_{n+1})=T(x_{n},x_{n})$. Thus,  $T(x_{n+1},x_{n+1})=T(x,y)$, for any $x,y \in C_n$ (using our hypothesis). On the other hand, $T(x_{n},z) \unlhd T(x_{n+1},z)\unlhd T(x_{1},z)= T(x,y)$, for any $x,y,z \in C_n$. Thus, $T(x_{n+1},z)= T(x,y)$, for any $x,y,z \in C_n$. Hence, $T(x,y)=T(x_{n+1},z)=T(x_{n+1},x_{n+1})$, for any $x,y,z \in C_n$ (i.e., $T(x,y)=T(z,t)$, for any $x,y,z,t \in C_{n+1}$).  Since $T(x,y) \notin C_{n+1}$, for any $x,y \in C_{n}$, it follows that  $T(x_{n+1},z)=T(x_{n+1},x_{n+1})  \notin C_{n+1}$,  for any $z \in C_{n+1}$. Consequently, $T(x,y) \notin C_{n+1}$, for any $x,y \in C_{n+1}$. 
\end{proof}

Theorem~\ref{Every cycle has a meet and join} and Proposition~\ref{t-norms with a cycles} leads to the following corollary.

\begin{corollary}{\ }\label{}
Let $(X,\unlhd,\wedge,\vee,0,1)$ be a finite bounded trellis and complete. If $T$ is a t-norm on $X$  such that $\{  C_i \}_{i \in I}$ is the set of all non-trivial cycles on $X$ which contains $x$ and $y$, then it holds that  $T(x,y) \unlhd T(\bigwedge ( \bigcup_{i \in I} C_i) ,\bigwedge (\bigcup_{i \in I} C_i))$.
\end{corollary}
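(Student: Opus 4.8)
The plan is to determine $T(x,y)$ exactly from the cycle structure, to locate it relative to $m:=\bigwedge(\bigcup_{i\in I}C_i)$, and then to compare $T(m,m)$ with it by increasingness.

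First I would guarantee that $m$ exists: since $X$ is finite and complete, Theorem~\ref{Every cycle has a meet and join} shows that every cycle, hence the whole set $\bigcup_{i\in I}C_i$, admits an infimum, so $m$ is well defined. I would then invoke Proposition~\ref{t-norms with a cycles} on each $C_i$: on any non-trivial cycle $T$ is constant, with value outside the cycle. Since by hypothesis every $C_i$ contains both $x$ and $y$, that constant equals $T(x,y)$ for each $i$; set $v:=T(x,y)$. Hence $v=T(c,c)$ for every $c\in\bigcup_{i\in I}C_i$, and $v\notin C_i$ for all $i$.

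Next I would pin down the positions of $v$ and $m$. Because $1$ is neutral and $T$ is increasing, $v=T(c,c)\unlhd T(c,1)=c$ for every $c\in\bigcup_{i\in I}C_i$, so $v$ is a lower bound of this set and therefore $v\unlhd m$. Conversely $m\unlhd c$ for each such $c$, and increasingness of $T$ gives $T(m,m)\unlhd T(c,c)=v$ through the chain $T(m,m)\unlhd T(c,m)\unlhd T(c,c)$. Thus at the level of $T$ the robust inequality is $T(m,m)\unlhd v=T(x,y)$; the assertion of the corollary is precisely its reverse, so what is really being claimed is the equality $T(x,y)=T(m,m)$, and the remaining work is to prove $v\unlhd T(m,m)$.

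For that half I would try to show that $v$ is $T$-idempotent: $T(v,v)\unlhd v$ comes for free from conjunctivity, and if I can also establish $v\unlhd T(v,v)$ then $v=T(v,v)$, after which $v\unlhd m$ and increasingness yield $v=T(v,v)\unlhd T(m,v)\unlhd T(m,m)$. The main obstacle is exactly here, and it is twofold. First, the non-transitivity of $\unlhd$ forbids collapsing chains like $T(m,m)\unlhd T(c,m)\unlhd T(c,c)$ or $v\unlhd T(m,v)\unlhd T(m,m)$ for free, so each such step must be closed by showing the mediating element is transitive---plausibly via Proposition~\ref{associative element is transitive}, or by restricting to the modular case where Theorem~\ref{Ass=tr} identifies transitivity with associativity. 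Second, and more seriously, the idempotency of $v$ is not automatic: since $v\notin C_i$, the constancy of $T$ on $C_i$ does not cover the product $T(c,v)$, and increasingness only returns $T(c,v)\unlhd v$, not equality. I therefore expect the crux of any proof to be either a direct verification that $T(m,m)=v$ under an additional regularity assumption on $m$ (e.g.\ that $m$ is transitive or lies in a cycle), or a demonstration that the $T$-powers of a cycle element stabilise after the first step; absent such an argument, only the reverse inequality $T(m,m)\unlhd T(x,y)$ is secured by increasingness.
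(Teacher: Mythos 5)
Your reduction is the right one, and the partial facts you establish are correct: $m=\bigwedge(\bigcup_{i\in I}C_i)$ exists by Theorem~\ref{Every cycle has a meet and join}; by Proposition~\ref{t-norms with a cycles} the t-norm $T$ is constant on each $C_i$ with value outside $C_i$, and since every $C_i$ contains $x$ and $y$ this constant is $v=T(x,y)$ for all $i$; finally $v=T(c,c)\unlhd T(c,1)=c$ for each $c\in\bigcup_{i\in I}C_i$, so $v\unlhd m$. But the conclusion to draw at your ``crux'' is stronger than the one you settle for: the missing half $v\unlhd T(m,m)$ is not merely hard to close, it is false in general, so the corollary as printed is wrong. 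There is also no hidden argument in the paper to recover: the corollary is only asserted to ``follow'' from Theorem~\ref{Every cycle has a meet and join} and Proposition~\ref{t-norms with a cycles}, and, exactly as your analysis shows, those two results deliver $v\unlhd m$ and (modulo transitivity of intermediate elements) $T(m,m)\unlhd v$ --- never the printed inequality, which would force $T(m,m)=m$-type idempotency that nothing supplies.

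Here is a concrete counterexample confirming your suspicion. Let $X=\{0,d,a,b,c,1\}$ with $0$ the bottom, $1$ the top, $d\lhd a$, $d\lhd b$, $d\lhd c$, and the non-trivial cycle $a\lhd b\lhd c\lhd a$. All pairs have meets and joins (e.g. $a\wedge b=a$, $b\wedge c=b$, $c\wedge a=c$, $a\wedge d=d$), the unique non-trivial cycle $\{a,b,c\}$ has meet $d$ and join $1$, so $X$ is a finite, bounded, complete trellis. Define $T$ commutatively by $T(1,z)=z$ and $T(0,z)=0$ for all $z$, $T(p,q)=d$ whenever $p,q\in\{a,b,c\}$, and $T(d,z)=0$ for $z\in\{d,a,b,c\}$. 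Then $T$ has neutral element $1$, is increasing (a routine check over the finitely many relations), and is associative: any bracketing of three factors from $\{d,a,b,c\}$ evaluates to $0$, since the inner product lies in $\{0,d\}$ and $T(\cdot,d)=T(\cdot,0)=0$ on $\{d,a,b,c\}$; triples involving $0$ or $1$ are trivial. So $T$ is a t-norm, consistent with Proposition~\ref{t-norms with a cycles}: it is constant on $\{a,b,c\}$ with value $d\notin\{a,b,c\}$. Now take $x=a$, $y=b$: the only non-trivial cycle containing both is $\{a,b,c\}$, so $m=\bigwedge\{a,b,c\}=d$, and the corollary would assert $T(a,b)=d\unlhd T(d,d)=0$, which fails. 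The statement that is actually provable along your lines (and presumably the intended one) is $T(x,y)\unlhd\bigwedge(\bigcup_{i\in I}C_i)$; upgrading it to $T(x,y)\unlhd T(m,m)$ requires precisely the extra regularity assumption on $m$ (e.g. $T(m,m)=m$) that you anticipated.
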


%\begin{proof}{\ }
%Since $X$ is a finite bounded trellis and complete, then it holds from Theorem~\ref{xxx} that  
%
%\end{proof}

 In the following illustrative example, we give all t-norms on a given bounded trellis has one cycle.
 
\begin{example}
Let $(X=\{0,a,b,c,1\}, \unlhd, \wedge , \vee)$ be a bounded trellis given by the Hasse-type diagram in Figure~\ref{Fig3}. Then the only  t-norm on $X$ is $T_W$.
 
 \begin{figure}[H]
\[\begin{tikzpicture}
\tikzstyle{estun}=[>=latex,thick,dotted]
    \vertex[fill] (0) at (0,0)  [label=right:$0$]  {};
    \vertex[fill] (a) at (0,0.5)  [label=right:$a$]  {};
    \vertex[fill] (b) at (0,1)  [label=right:$b$]  {};
    \vertex[fill] (c) at (0,1.5)  [label=right:$c$]  {};
    \vertex[fill] (1) at (0,2)  [label=right:$1$]  {};
   
    \path
        (0) edge (a)
        (c) edge (1)
        ;
    \path[->]             
        (a) edge (b)
        (b) edge (c)

        ;
   \draw[->] (c) to [bend right=60] (a)   
      
        ;
       
\end{tikzpicture}\]
\caption{The Hasse-type  diagram of the trellis $(X=\lbrace 0,a,b,c,1 \rbrace,\unlhd)$.}\label{Fig3}
\end{figure}
\end{example} 

\begin{proposition}{\ }\label{pseudo-chain have at most two dashed curve}
The smallest pseudo-chain have at most two dashed curve, are isomorphic to one of the following pseudo-chains defined as follows:

 \begin{figure}[H]
\[\begin{tikzpicture}
\tikzstyle{estun}=[>=latex,thick,dotted]

    \vertex[fill] (0) at (-6,0)  [label=right:$0$]  {};
    \vertex[fill] (a) at (-6,0.75)  [label=right:$a$]  {};
    \vertex[fill] (b) at (-6,1.5)  [label=right:$b$]  {};
    \vertex[fill] (c) at (-6,2.25)  [label=right:$c$]  {};
    \vertex[fill] (1) at (-6,3)  [label=right:$1$]  {};
   
    \path
        (0) edge (a)
        (a) edge (b)
        (b) edge (c)
        (c) edge (1)

       ;
   \draw[estun] (a) to [bend left=60] (c) ;

    \vertex[fill] (0) at (-4,0)  [label=right:$0$]  {};
    \vertex[fill] (a) at (-4,0.75)  [label=right:$a$]  {};
    \vertex[fill] (b) at (-4,1.5)  [label=right:$b$]  {};
    \vertex[fill] (c) at (-4,2.25)  [label=right:$c$]  {};
    \vertex[fill] (d) at (-4,3)  [label=right:$d$]  {};
    \vertex[fill] (1) at (-4,3.75)  [label=right:$1$]  {};
   
    \path
        (0) edge (a)
        (a) edge (b)
        (b) edge (c)
        (c) edge (d)
        (d) edge (1)  
        
       ;
   \draw[estun] (a) to [bend left=60] (c) ;         
      \draw[estun] (a) to [bend left=70] (d) ;

    \vertex[fill] (0) at (-2,0)  [label=right:$0$]  {};
    \vertex[fill] (a) at (-2,0.75)  [label=right:$a$]  {};
    \vertex[fill] (b) at (-2,1.5)  [label=right:$b$]  {};
    \vertex[fill] (c) at (-2,2.25)  [label=right:$c$]  {};
    \vertex[fill] (d) at (-2,3)  [label=right:$d$]  {};
    \vertex[fill] (1) at (-2,3.75)  [label=right:$1$]  {};
   
    \path
        (0) edge (a)
        (a) edge (b)
        (b) edge (c)
        (c) edge (d)
        (d) edge (1)

       ;
   \draw[estun] (a) to [bend left=60] (c) ;         
      \draw[estun] (b) to [bend left=60] (d) ;

    \vertex[fill] (0) at (2,0)  [label=right:$0$]  {};
    \vertex[fill] (a) at (2,0.75)  [label=right:$a$]  {};
    \vertex[fill] (b) at (2,1.5)  [label=right:$b$]  {};
    \vertex[fill] (c) at (2,2.25)  [label=right:$c$]  {};
    \vertex[fill] (d) at (2,3)  [label=right:$d$]  {};
    \vertex[fill] (e) at (2,3.75)  [label=right:$e$]  {};
    \vertex[fill] (1) at (2,4.5)  [label=right:$1$]  {};
   
    \path
        (0) edge (a)
        (a) edge (b)
        (b) edge (c)
        (c) edge (d)
        (d) edge (e)  
        (e) edge (1)
        
       ;
   \draw[estun] (a) to [bend left=70] (c) ;         
      \draw[estun] (c) to [bend left=60] (e) ; 
      
    \vertex[fill] (0) at (0,0)  [label=right:$0$]  {};
    \vertex[fill] (a) at (0,0.75)  [label=right:$a$]  {};
    \vertex[fill] (b) at (0,1.5)  [label=right:$b$]  {};
    \vertex[fill] (c) at (0,2.25)  [label=right:$c$]  {};
    \vertex[fill] (d) at (0,3)  [label=right:$d$]  {};
    \vertex[fill] (1) at (0,3.75)  [label=right:$1$]  {};
   
    \path
        (0) edge (a)
        (a) edge (b)
        (b) edge (c)
        (c) edge (d)
        (d) edge (1)

       ;
   \draw[estun] (a) to [bend left=70] (d) ;         
      \draw[estun] (b) to [bend left=60] (d) ; 
      
          \vertex[fill] (0) at (4,0)  [label=right:$0$]  {};
    \vertex[fill] (a) at (4,0.75)  [label=right:$a$]  {};
    \vertex[fill] (b) at (4,1.5)  [label=right:$b$]  {};
    \vertex[fill] (c) at (4,2.25)  [label=right:$c$]  {};
    \vertex[fill] (d) at (4,3)  [label=right:$d$]  {};
    \vertex[fill] (e) at (4,3.75)  [label=right:$e$]  {};
    \vertex[fill] (1) at (4,4.5)  [label=right:$1$]  {};
   
    \path
        (0) edge (a)
        (a) edge (b)
        (b) edge (c)
        (c) edge (d)
        (d) edge (e)  
        (e) edge (1)
        
       ;
   \draw[estun] (a) to [bend left=70] (e) ;         
      \draw[estun] (b) to [bend left=60] (d) ; 
       
\end{tikzpicture}\]
\end{figure}
\end{proposition}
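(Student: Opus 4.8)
The plan is to prove the statement by a purely combinatorial enumeration of where the dashed curves can sit along the chain, since for a pseudo-chain the Hasse-type diagram is a single ascending path. First I would record the two structural constraints that pin down admissible dashed curves. Because each displayed diagram is bounded, with $0 \unlhd x \unlhd 1$ for every $x$, neither $0$ nor $1$ can be an endpoint of a dashed curve, so every dashed curve joins two \emph{internal} elements. Moreover, along the path two consecutive elements form a covering pair and are therefore always $\unlhd$-related; hence a dashed curve can only join two elements at distance at least two along the path, i.e. its span must contain at least one strictly intermediate element. These two facts already force at least one intermediate element per curve and exclude $0,1$ as endpoints.

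Next I would split into cases by the number of dashed curves. If there is no dashed curve then $\unlhd$ is transitive and the structure is an ordinary chain, which is discarded. If there is exactly one dashed curve, the distance-two constraint forces three internal elements $a \lhd b \lhd c$ with $a \ntrianglelefteq c$; together with $0$ and $1$ this gives a five-element pseudo-chain, clearly unique up to isomorphism, namely the first diagram. For two dashed curves I would classify the mutual position of the two spanning intervals $[x_1,y_1]$ and $[x_2,y_2]$ (each of length $\ge 2$): they may share their lower endpoint, share their upper endpoint, cross (be interleaved, $x_1 \lhd x_2 \lhd y_1 \lhd y_2$), abut (the upper endpoint of one equals the lower endpoint of the other), or be strictly nested. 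For each configuration I would compute the least number of internal elements that realizes it — four internal elements for the share-lower, share-upper and crossing cases, and five for the abutting and the nested cases — and exhibit the minimal pseudo-chain, obtaining precisely the remaining five diagrams (orders $6,6,6,7,7$).

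It then remains to check, for each exhibited diagram, that it genuinely is a pseudo-chain: reflexivity and antisymmetry of $\unlhd$ are immediate since the covering relation is acyclic, and the totality of $\lesssim$ is immediate since the diagram is a single path, so each candidate is legitimate and carries exactly the prescribed dashed curves. Finally I would verify that the six diagrams are pairwise non-isomorphic. Here the only delicate point is that the share-lower-endpoint and share-upper-endpoint configurations are order-\emph{duals} of one another, hence anti-isomorphic but not isomorphic as pseudo-ordered sets, so they must be listed as two separate diagrams; the crossing, abutting and nested configurations are each self-dual.

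The hard part will be the completeness and minimality of the two-curve analysis. For completeness one must confirm that the five listed mutual positions exhaust all cases: the only omitted possibility, two strictly separated curves, requires strictly more elements and decomposes into two independent one-curve blocks, so it is never among the smallest. For minimality one must show that the stated element counts cannot be lowered, and this is exactly where the distance-$\ge 2$ constraint together with the demand for \emph{distinct} intermediate elements in the abutting and nested cases does the real work — it is what pushes those two cases up to seven elements. By contrast, verifying that each displayed candidate is a valid bounded pseudo-chain is routine.
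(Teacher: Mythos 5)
First, a point of comparison: the paper offers no proof of this proposition at all --- it is stated bare, and the remark that follows it appeals to a Matlab computation --- so your reconstruction can only be judged on its own merits. On the positive side, your interval classification of two dashed curves (share-lower, share-upper, crossing, abutting, nested) does reproduce exactly the five two-curve diagrams, with minimal sizes $6,6,6,7,7$, and your observation that the share-lower and share-upper configurations are anti-isomorphic but not isomorphic correctly explains why both must appear in the list.

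The genuine gap is your very first structural claim, that ``for a pseudo-chain the Hasse-type diagram is a single ascending path.'' That is false for psosets: a pseudo-chain only requires $x \lesssim y$ or $y \lesssim x$, and the preorder $\lesssim$ tolerates cycles. The paper's own Figure~3 exhibits a five-element bounded trellis that is a pseudo-chain containing the cycle $a \unlhd b \unlhd c \unlhd a$ and having \emph{no} dashed curve at all; it therefore has ``at most two dashed curves,'' ties your one-curve minimum in cardinality, and is isomorphic to none of the listed diagrams. Consequently your step ``no dashed curve implies an ordinary chain'' fails, and the entire enumeration rests on an acyclicity assumption that must either be added to the statement's hypotheses or proved; as written, your argument --- and indeed the proposition itself under a literal reading --- is contradicted by that figure. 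A second, lesser defect: you admit the seven-element abutting and nested diagrams on the grounds of minimality per configuration, but then exclude the separated configuration (whose minimal realization has eight elements) because it ``decomposes into two independent one-curve blocks.'' That is not a consequence of the minimality notion you are otherwise using, so the exclusion is ad hoc; a complete proof would have to formulate precisely the notion of ``smallest'' under which the seven-element diagrams are in and the eight-element one is out, and neither your proposal nor the paper supplies one.
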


\begin{remark} 
The bounded pseudo-chains have at most two dashed curve defined in Proposition~\ref{pseudo-chain have at most two dashed curve} has the greatest t-norm (using Matlab Program). In general, the greatest t-norm (resp.\ the smallest t-conorm)  on a arbitrary bounded pseudo-chain does not necessarily exist. Indeed, let $(X=\{0,a,b,c,d,e,f,1\}, \unlhd , \wedge , \vee)$ be a bounded pseudo-chain have three dashed curve given by the Hasse-type  diagram in Figure~\ref{Fig5}  and $T_{1}$ a binary operation on $X$ defined by the following table:
\begin{center}
\begin{tabular}{|c|c|c|c|c|c|c|c|c|}
\hline 
$T_{1}$ & $0$ & $a$ & $b$ & $c$ & $d$ & $e$  & $f$ & $1$ \\ 
\hline 
$0$ & $0$ & $0$ & $0$ & $0$ & $0$ & $0$  & $0$ & $0$   \\ 
\hline 
$a$ & $0$ & $0$ & $0$ & $0$ & $0$ & $0$  & $0$ & $a$ \\ 
\hline 
 $b$ & $0$ & $0$ & $b$ & $b$ & $b$ & $b$ & $b$ & $b$ \\ 
\hline 
$c$ & $0$ & $0$ & $b$ & $c$ & $c$ & $c$ & $c$ & $c$  \\ 
\hline 
$d$ & $0$ & $0$ & $b$ & $c$ & $c$ & $c$  & $c$ & $d$  \\ 
\hline 
$e$ & $0$ & $0$ & $b$ & $c$ & $c$ & $e$   & $e$ & $e$  \\ 
\hline 
$f$& $0$ & $0$ & $b$ & $c$ & $c$ & $e$  & $f$ & $f$  \\ 
\hline 
$1$ & $0$ & $a$ & $b$ & $c$ & $d$ & $e$  & $f$ &  $1$  \\ 
\hline
\end{tabular}$~~~~$
\end{center}

One easily verifies that $T_{1}$  is a maximal t-norm on $X$ (using Matlab Program). On other hand, for guarantees that $T_{1}$ is not the greatest t-norm on $X$, it is enough to find one t-norm  on $X$ such that $T \ntrianglelefteq T_{1}$. Let $T_{2}$  be a binary operation on $X$ defined by the following table:
\begin{center}
\begin{tabular}{|c|c|c|c|c|c|c|c|c|}
\hline 
$T_{2}$ & $0$ & $a$ & $b$ & $c$ & $d$ & $e$ & $f$ & $1$ \\ 
\hline 
$0$ & $0$ & $0$ & $0$ & $0$ & $0$ & $0$ & $0$ & $0$   \\ 
\hline 
$a$ & $0$ & $0$ & $0$ & $0$ & $0$ & $0$  & $0$  & $a$ \\ 
\hline 
 $b$ & $0$ & $0$ & $0$ & $0$ & $0$ & $b$  & $b$  & $b$ \\ 
\hline 
$c$ & $0$ & $0$ & $0$ & $0$ & $0$ & $b$  & $b$  & $c$  \\ 
\hline 
$d$ & $0$ & $0$ & $0$ & $0$ & $a$ & $b$  & $c$  & $d$  \\ 
\hline 
$e$ & $0$ & $0$ & $b$ & $b$ & $b$ & $e$  & $e$  & $e$  \\ 
\hline 
$f$& $0$ & $0$ & $b$ & $b$ & $c$ & $e$  & $e$  & $f$  \\  
\hline 
$1$ & $0$ & $a$ & $b$ & $c$ & $d$ & $e$ & $f$ &  $1$  \\ 
\hline
\end{tabular}$~~~~$
\end{center}
 One easily verifies that $T_{2}$ is a t-norm on $X$ (Matlab program). Let $x,y \in X$ such that $x=y=d$. Then $T_{1}(d,d)=c$ and $T_{2}(d,d)=a$. Since $a$ and $c$ are incomparable, it holds that $T_{1}$ and  $T_{2}$ are incomparable t-norms. Hence, $T_{1}$ is not the greatest t-norm on $X$.
 
 \begin{figure}[H]
\[\begin{tikzpicture}
\tikzstyle{estun}=[>=latex,thick,dotted]
    \vertex[fill] (0) at (0,0)  [label=right:$0$]  {};
    \vertex[fill] (a) at (0,0.75)  [label=right:$a$]  {};
    \vertex[fill] (b) at (0,1.5)  [label=right:$b$]  {};
    \vertex[fill] (c) at (0,2.25)  [label=right:$c$]  {};
    \vertex[fill] (d) at (0,3)  [label=right:$d$]  {};
    \vertex[fill] (e) at (0,3.75)  [label=right:$e$]  {};
    \vertex[fill] (f) at (0,4.5)  [label=right:$f$]  {};
    \vertex[fill] (1) at (0,5.25)  [label=right:$1$]  {};
   
    \path
        (0) edge (a)
        (a) edge (b)
        (b) edge (c)
        (c) edge (d)
        (d) edge (e)
        (e) edge (f)   
        (f) edge (1)
        
       ;
   \draw[estun] (a) to [bend left=70] (c) ;         
      \draw[estun] (a) to [bend left=60] (f) ;   
   \draw[estun] (d) to  [bend left=70] (f)
          
        ;
       
\end{tikzpicture}\]
\caption{The Hasse-type  diagram of the pseudo-chain $(X=\{0,a,b,c,d,e,f,1\},\unlhd)$.}\label{Fig5}
\end{figure}
Moreover, by a computer program (Matlab program),  the number of all t-norms is 159.
\end{remark} 

\subsection{Pseudo-triangular norms  on bounded trellises}
In this subsection, we introduce the notions of pseudo-triangular norms  on bounded trellises  and present some illustrative examples.

\begin{problem} Remark~\ref{existence idempotent t-norm on bounded trellis} leads  to research a new pseudo  triangular norms  on  bounded trellises as t-norms on  bounded lattices including the meet operation of  bounded trellises. In particular, an idempotent pseudo  triangular norms on  bounded trellises. Moreover,  the pseudo  triangular norms on bounded trellises  extends   triangular norms on bounded lattices.
\end{problem}

\begin{definition}\label{Pseudo-t-norm}
Let $(X,\unlhd,\wedge,\vee,0,1)$ be a bounded trellis. A binary operation $T: X^2 \to X$ is called a pseudo-triangular norm (pseudo-t-norm, for short), if it is commutative, weakly-increasing, weakly-associative and has $1$ as neutral element, i.e., $T(1, x)=x$, for any $x \in X$.
\end{definition}

Analogously, we define a pseudo-triangular conorm on a bounded trellis.

\begin{definition}\label{Pseudo-t-conorm}
Let $(X,\unlhd,\wedge,\vee,0,1)$ be a bounded trellis. A binary operation $S: X^2 \to X$ is called a pseudo-triangular conorm (pseudo-t-conorm, for short), if it is commutative, weakly-increasing, weakly-associative and has $0$ as neutral element, i.e., $S(0, x)=x$, for any $x \in X$.
\end{definition}

Next, we give some examples  of pseudo-t-norms and  pseudo-t-conorms bounded trellises. 

\begin{example}
Let $(X,\unlhd ,\wedge ,\vee )$ be a bounded trellis. It holds that 
\begin{enumerate}[label=(\roman*)]
\item $\wedge$ is a pseudo-t-norm on $X$; 
\item $\vee$ is a pseudo-t-conorm on $X$;
\item The binary operations $T_D$ (resp. $S_D$) defined in Example~\ref{T_D and S_D} is a pseudo-t-norm (resp. pseudo-t-conorm).
\end{enumerate}
\end{example}

\begin{notation}
Let $(X,\unlhd ,\wedge ,\vee )$ be a bounded trellis. We denote by:
\begin{enumerate}[label=(\roman*)]
\item $\mathcal{WAO}_1(X)$:  the class (or the set) of all pseudo-t-norms on $ X$;
\item $\mathcal{WAO}_0(X)$:  the class (or the set) of all pseudo-t-conorms on $ X$.
\end{enumerate}
\end{notation}

\begin{remark}
\begin{enumerate}[label=(\roman*)]
\item $\mathcal{WAO}_1(X)$ (resp. $\mathcal{WAO}_0(X)$) extends the class of all t-norms (resp. the class of all t-conorms) on the bounded trellis $X$.
\item In general, one can easily observe that $\mathcal{AO}_e(X)\subseteq \mathcal{WAO}_e(X)$, for any $e\in \{0,1\}$.
\end{enumerate}
\end{remark}

%\begin{example}
%\begin{enumerate}[label=(\roman*)]  
%\item In a given bounded trellis  $(X,\unlhd,\wedge,\vee,0,1)$, the meet ($\wedge$)  operation is a pseudo-t-norm on $X$.
%\item The binary operations $T_D$  defined in Example~\ref{T_D and S_D} is  a pseudo-t-norm on $X$, respectively. As a remark, Any t-norm on a bounded trellis is pseudo-t-norm, in general.
%\end{enumerate}	
%\end{example}

%\begin{notation}
%Let $(X,\unlhd,\wedge,\vee,0,1)$ be a bounded trellis. We denote by $\wedge$ the  meet $(\wedge)$ operation of $X$. 
%\end{notation} 

%\begin{remark}{\ }\label{}
%In a bounded trellis  $(X,\unlhd,\wedge,\vee,0,1)$,  Then $\mathcal{AO}_e(X^{\wedge-ass})= \mathcal{WAO}_e(X^{\wedge-ass})$  and .
%\end{remark}

\subsection{Properties of pseudo-triangular norms on  bounded trellises}
In this subsection, we investigate some properties of $\mathcal{WAO}_1(X)$ and $\mathcal{WAO}_0(X)$. 

The following Proposition shows the duality between the two classes $\mathcal{WAO}_1(X)$ and $\mathcal{WAO}_0(X)$. We recall that 
for a given bounded trellis $(X,\unlhd,\wedge,\vee,0,1)$,  its dual bounded trellis  is defined as $(X^*,\unlhd^*,\wedge^*,\vee^*, 0^*,1^*)$, where $X^* = X$, $x\unlhd^* y$ if and only if $y \unlhd x$, $0^* = 1$ and $1^* = 0$.

\begin{proposition}\label{}
 Let $(X,\unlhd,\wedge,\vee,0,1)$ be a bounded trellis and $F$  a binary operation on $X$. Then the following implications hold:
 \begin{enumerate}[label=(\roman*)]
\item If $F\in \mathcal{WAO}_1(X)$, then $F\in \mathcal{WAO}_0(X^*)$; 
\item If $F\in \mathcal{WAO}_0(X)$, then $F\in \mathcal{WAO}_1(X^*)$.
\end{enumerate}
\end{proposition}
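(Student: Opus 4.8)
The plan is to prove (i) directly, by checking that each of the four defining axioms of a pseudo-t-conorm on $X^*$ follows from the corresponding axiom of a pseudo-t-norm on $X$, and then to obtain (ii) either by the symmetric argument or by applying (i) to the dual trellis and invoking $(X^*)^* = X$. The crucial preliminary observation is that dualization leaves the underlying set and the operation $F$ itself unchanged while interchanging the two lattice-type operations and the bounds: $X^* = X$, $0^* = 1$, $1^* = 0$, and since the meet $\wedge^*$ is the infimum for $\unlhd^*$, which is the supremum for $\unlhd$, we have $\wedge^* = \vee$ and dually $\vee^* = \wedge$.

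First I would record how the auxiliary element-classes transform under duality, because these govern the side conditions in the definitions of weakly-increasing and weakly-associative. Since reversing $\unlhd$ exchanges right-transitivity with left-transitivity while middle-transitivity is self-dual, an element is transitive in $X$ if and only if it is transitive in $X^*$; hence $X^{tr} = (X^*)^{tr}$. Similarly, because $\wedge^* = \vee$ and $\vee^* = \wedge$, a $3$-tuple is $\wedge$-associative in $X$ exactly when it is $\vee^*$-associative in $X^*$, and $\vee$-associative in $X$ exactly when it is $\wedge^*$-associative in $X^*$. Consequently the disjunctive side condition ``$[x,y,z]\in X^{\wedge\text{-}ass}$ or $[x,y,z]\in X^{\vee\text{-}ass}$'' is invariant under dualization.

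With these identifications in hand the four axioms transfer routinely. Commutativity of $F$ makes no reference to the order, so it is immediate. For the neutral element, a pseudo-t-conorm on $X^*$ requires $0^*$ to be neutral; since $0^* = 1$ and $F(1,x) = x$ holds by hypothesis, this is satisfied. For weak-increasingness one unwinds the definition in $X^*$: the premise $x \unlhd^* y$ means $y \unlhd x$ and the conclusion $F(x,z)\unlhd^* F(y,z)$ means $F(y,z)\unlhd F(x,z)$, so after relabelling $x \leftrightarrow y$ this is precisely weak-increasingness of $F$ in $X$, with the side condition $[x,y]\in X^{tr}$ unchanged by the previous paragraph. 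For weak-associativity, the equation $F(x,F(y,z)) = F(F(x,y),z)$ again does not mention the order, and its side condition is exactly the invariant disjunction identified above; hence it transfers verbatim.

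The argument contains no genuine obstacle; the only point demanding care is the bookkeeping of the element-classes under duality, namely the equality $X^{tr} = (X^*)^{tr}$ and the interchange of $\wedge$- and $\vee$-associativity, since these are precisely the clauses that restrict where weak-increasingness and weak-associativity are required to hold. Once these two facts are established, every axiom reduces to a direct translation. Finally, part (ii) follows by the identical computation with the roles of $0$ and $1$ (equivalently of $\wedge$ and $\vee$) exchanged, or more economically by applying (i) to $X^*$ and using $(X^*)^* = X$.
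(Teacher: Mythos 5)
Your proof is correct, and it takes the only natural route here: the paper itself dismisses this proposition with ``The proof is straightforward,'' and your argument is precisely that straightforward duality check, spelled out in full. The two bookkeeping facts you isolate --- $X^{tr}=(X^*)^{tr}$ (since dualization swaps right- and left-transitivity and fixes middle-transitivity) and the interchange $(X^*)^{\wedge^*\text{-}ass}=X^{\vee\text{-}ass}$, $(X^*)^{\vee^*\text{-}ass}=X^{\wedge\text{-}ass}$ --- are exactly the details the paper leaves implicit, and they are handled correctly.
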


\begin{proof}
The proof is straightforward.
\end{proof}

\begin{proposition}\label{pseudo-t-norm is conjunctive}
Let $(X,\unlhd,\wedge,\vee,0,1)$ be a bounded trellis. The following implications hold: 
 \begin{enumerate}[label=(\roman*)]  
\item  Any element of $\mathcal{WAO}_1(X)$ is conjunctive;
\item  Any element of $\mathcal{WAO}_0(X)$ is disjunctive.
\end{enumerate}
\end{proposition}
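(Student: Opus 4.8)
The plan is to prove statement (i), that any pseudo-t-norm is conjunctive, and obtain (ii) by duality via the preceding proposition. So I fix $F \in \mathcal{WAO}_1(X)$ and must show $F(x,y) \unlhd x \meet y$ for all $x,y \in X$. The natural strategy is to bound $F(x,y)$ by each of $x$ and $y$ separately, and then combine these bounds into the bound by $x \meet y$.

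First I would establish the one-sided bounds $F(x,y) \unlhd x$ and $F(x,y) \unlhd y$. Since $y \unlhd 1$ always holds, and since $1$ is transitive (being the greatest element, it is trivially right-, left- and middle-transitive, so $1 \in X^{\tr}$), the pair $[y,1]$ meets the hypothesis of weak-increasingness. Thus from $y \unlhd 1$ I get $F(x,y) \unlhd F(x,1)$, and the neutral element property together with commutativity gives $F(x,1) = F(1,x) = x$. Hence $F(x,y) \unlhd x$, and symmetrically (using commutativity) $F(x,y) \unlhd y$.

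The remaining step is to pass from $F(x,y) \unlhd x$ and $F(x,y) \unlhd y$ to $F(x,y) \unlhd x \meet y$. This is exactly the universal property of the infimum: $x \meet y = \bigwedge\{x,y\}$ is by definition the greatest lower bound of $\{x,y\}$, so any element that lies below both $x$ and $y$ must lie below their meet. I expect this to be the main subtlety to state carefully, because in a trellis the order $\unlhd$ is not transitive, and one must be sure that "$z \unlhd x$ and $z \unlhd y$ imply $z \unlhd x\meet y$" still follows purely from the defining property of $x \meet y$ as greatest lower bound rather than from any transitivity argument. It does: the infimum is defined directly as the largest lower bound in the antisymmetric relation $\unlhd$, so $F(x,y)$, being a lower bound of $\{x,y\}$, satisfies $F(x,y) \unlhd x \meet y$ by definition. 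This yields conjunctivity and completes (i); then applying the duality proposition to $F$ on $X^*$, where meet and order reverse, converts the conjunctive inequality into the disjunctive inequality $x \join y \unlhd F(x,y)$, establishing (ii).
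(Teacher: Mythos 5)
Your proof is correct and follows essentially the same route as the paper's: both use $1 \in X^{\tr}$ together with weak-increasingness, commutativity, and the neutral element to obtain $F(x,y) \unlhd x$ and $F(x,y) \unlhd y$, then conclude $F(x,y) \unlhd x \meet y$ via the greatest-lower-bound property, and dispatch (ii) by duality. Your explicit remark that the final step relies only on the definition of the infimum and not on transitivity of $\unlhd$ is a welcome clarification that the paper leaves implicit.
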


\begin{proof}
\begin{enumerate}[label=(\roman*)]  
\item Let $T \in \mathcal{WAO}_1(X)$ and  $x,y\in X$. Since $1 \in X^{tr}$ and $T$ is weakly-increasing and commutative, it follows that  $T(x,y) \unlhd T(1,y)$ and $T(x,y) \unlhd T(x,1)$. The fact that $1$ is the neutral element of $T$ implies that $T(x,y) \unlhd y$ and $T(x,y) \unlhd x$. Thus, $T(x,y) \unlhd x \wedge y$. Therefore, $T$ is conjunctive.
\item The proof is dual to that of (i).
\end{enumerate}
\end{proof}
 
\begin{proposition}\label{boundary of Pseudo-t}
Let $(X,\unlhd,\wedge,\vee,0,1)$ be a bounded trellis and $F$ a binary operation on $X$. Then the following implications hold:  
 \begin{enumerate}[label=(\roman*)]  
\item If $F\in \mathcal{WAO}_1(X)$, then $F(x,0)=0$, for any $x \in X$;
\item If $F\in \mathcal{WAO}_0(X)$, then $F(x,1)=1$, for any $x \in X$. 
\end{enumerate} 
\end{proposition}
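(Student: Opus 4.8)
The plan is to prove the two boundary conditions directly from the defining properties of the respective classes, using the conjunctivity/disjunctivity already established in Proposition~\ref{pseudo-t-norm is conjunctive}. I would handle part (i) in full and then obtain part (ii) by the duality of Proposition~\ref{} (the dual-trellis proposition preceding this one), since an element of $\mathcal{WAO}_0(X)$ is an element of $\mathcal{WAO}_1(X^*)$ and the roles of $0$ and $1$ are swapped.

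For part (i), let $F\in\mathcal{WAO}_1(X)$ and fix $x\in X$. First I would invoke Proposition~\ref{pseudo-t-norm is conjunctive}(i), which gives that $F$ is conjunctive, so that $F(x,0)\unlhd x\wedge 0$. The key observation is that, since $0$ is the smallest element, $0\unlhd x$ and hence $x\wedge 0=0$ (the meet of $0$ with anything is $0$ in any bounded trellis, because $0$ is a lower bound of $\{x,0\}$ and is itself in the set, so it is the infimum). This yields $F(x,0)\unlhd 0$. On the other hand $0$ is the smallest element, so $0\unlhd F(x,0)$. Antisymmetry of the pseudo-order $\unlhd$ then forces $F(x,0)=0$, as desired.

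Part (ii) follows dually: for $F\in\mathcal{WAO}_0(X)$, disjunctivity from Proposition~\ref{pseudo-t-norm is conjunctive}(ii) gives $x\vee 1\unlhd F(x,1)$, and since $1$ is the greatest element we have $x\vee 1=1$, so $1\unlhd F(x,1)$; combined with $F(x,1)\unlhd 1$ and antisymmetry, this gives $F(x,1)=1$. Alternatively one simply applies part (i) in the dual trellis $X^*$ via the preceding proposition.

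I do not expect any genuine obstacle here; the statement is a routine consequence of conjunctivity together with the extremal property of the bounds and antisymmetry. The only point requiring a moment's care is that I must \emph{not} appeal to the neutral element to conclude $F(x,0)=0$ (the neutral element of a pseudo-t-norm is $1$, not $0$); the argument must go through conjunctivity and the fact that $0$ absorbs under meet. This is precisely why the result is placed after Proposition~\ref{pseudo-t-norm is conjunctive} rather than derived straight from the definition.
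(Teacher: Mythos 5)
Your proof is correct, but it takes a different (equally valid) route from the paper's. The paper proves (i) in one step from weak-increasingness: since $x\unlhd 1$, $1\in X^{\tr}$ and $F$ is weakly-increasing, $F(x,0)\unlhd F(1,0)=0$, where $F(1,0)=0$ comes from $1$ being the neutral element; part (ii) is declared dual. You instead route the argument through Proposition~\ref{pseudo-t-norm is conjunctive}: conjunctivity gives $F(x,0)\unlhd x\wedge 0=0$, and antisymmetry finishes. Since the conjunctivity proposition precedes this one in the paper, there is no circularity, and your observation that $x\wedge 0=0$ (a lower bound belonging to the set is the infimum) is sound. The trade-off is minor: your version hides the use of weak-increasingness and neutrality inside the conjunctivity lemma, while the paper's version reuses them directly and never needs the lemma at all. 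One small correction to your closing remark: you suggest the argument must avoid any appeal to the neutral element, and that this explains the placement after the conjunctivity result; in fact the paper's own proof \emph{does} use the neutral element, namely through $F(1,0)=0$ --- what must be avoided is only the incorrect claim that $0$ is neutral for $F$, which neither proof makes. So your caution is well-placed but your inference about the paper's structure is not accurate.
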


\begin{proof}{\ }
\begin{enumerate}[label=(\roman*)]
\item Suppose that $F\in \mathcal{WAO}_1(X)$ and $x\in X$. Since $1 \in X^{tr}$, $x \unlhd 1$ and $F$ is weakly-increasing, it follows that $F(x,0) \unlhd F(1,0)=0$. Thus, $F(x,0)=0$, for any $x \in X$.
\item The proof is dual to that of (i).
\end{enumerate} 
\end{proof}

\begin{remark}
If the cardinal of $X$ is greater than $1$ (i.e., $| X | > 1$), then
$$\mathcal{WAO}_0(X) \cap \mathcal{WAO}_1(X)=\emptyset.$$
\end{remark}

\subsection{Psoset structures of $\mathcal{WAO}_0(X)$ and $\mathcal{WAO}_1(X)$}
In this subsection, we discuss the bounded psoset structures of $\mathcal{WAO}_1(X)$ and $\mathcal{WAO}_0(X)$.

\bigskip
 
For any  $F_1,F_2\in \mathcal{WAO}_e(X)$, we define:
$$F_1 \unlhd_\mathcal{WAO} F_2 \text{ if and only if }  \ F_1(x,y)\unlhd F_2(x,y), \text{ for any } x, y \in X.$$

The following result is a natural generalization to that of triangular norms on the trellis.

\begin{proposition}\label{}
Let $(X,\unlhd_X,\wedge_X,\vee_X, 0, 1)$ be a bounded trellis. Then it holds that:
\begin{enumerate}[label=(\roman*)]  
\item $ T_D \unlhd_\mathcal{WAO}  F \unlhd_\mathcal{WAO}  \wedge$, for any $F \in \mathcal{WAO}_1(X)$\,;
\item $ \vee \unlhd_\mathcal{WAO}  F \unlhd_\mathcal{WAO}  S_D$, for any $F \in \mathcal{WAO}_0(X)$\,.
\end{enumerate}
\end{proposition}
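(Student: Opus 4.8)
The plan is to prove the two chains of inequalities in (i) separately, and then obtain (ii) by duality. For the upper bound $F \unlhd_\mathcal{WAO} \wedge$ there is nothing new to do: by definition of $\unlhd_\mathcal{WAO}$ this amounts to $F(x,y) \unlhd x \wedge y$ for all $x,y \in X$, which is exactly the statement that $F$ is conjunctive. Since $F \in \mathcal{WAO}_1(X)$, this is immediate from Proposition~\ref{pseudo-t-norm is conjunctive}(i).

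For the lower bound $T_D \unlhd_\mathcal{WAO} F$, I would fix $x,y \in X$ and argue by a case distinction dictated by the definition of $T_D$ given in Example~\ref{T_D and S_D}. In the first case, when $x=1$ or $y=1$, say $y = 1$, the definition gives $T_D(x,y) = x \wedge 1 = x$ (using $x \unlhd 1$), while the neutral-element property of $F$ gives $F(x,1) = x$; hence $T_D(x,y) = F(x,y)$, and in particular $T_D(x,y) \unlhd F(x,y)$. In the complementary case, when neither $x$ nor $y$ equals $1$, the definition gives $T_D(x,y) = 0$, and since $0$ is the smallest element of the bounded trellis we trivially have $T_D(x,y) = 0 \unlhd F(x,y)$. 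Combining the two cases yields $T_D(x,y) \unlhd F(x,y)$ for all $x,y$, i.e.\ $T_D \unlhd_\mathcal{WAO} F$, completing (i).

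Finally, statement (ii) follows from (i) by passing to the dual bounded trellis $(X^*,\unlhd^*,\wedge^*,\vee^*,0^*,1^*)$ and invoking the duality already established: by the proposition relating $\mathcal{WAO}_1$ and $\mathcal{WAO}_0$, an $F \in \mathcal{WAO}_0(X)$ corresponds to an element of $\mathcal{WAO}_1(X^*)$, under which $\vee$ plays the role of $\wedge^*$ and $S_D$ plays the role of $T_D$ (with $0$ and $1$ interchanged). Rewriting the inequalities of (i) in $X^*$ and translating back to $X$ gives $\vee \unlhd_\mathcal{WAO} F \unlhd_\mathcal{WAO} S_D$.

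I do not expect any genuine obstacle here: the upper bounds are recorded conjunctivity/disjunctivity facts, and the lower bounds reduce to the definition of $T_D$ (resp.\ $S_D$) together with the neutral-element axiom and the extremality of $0$ (resp.\ $1$). The only point requiring a little care is making the case split on $T_D$ match its piecewise definition exactly, and ensuring the boundary identities $x \wedge 1 = x$ and the neutral-element equation are invoked in the right case; everything else is routine and the dual half should be stated rather than recomputed.
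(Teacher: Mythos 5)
Your proof is correct and follows essentially the same route as the paper: the upper bound is exactly Proposition~\ref{pseudo-t-norm is conjunctive}, the lower bound is the same case split on whether one argument equals $1$ (using the neutral element of $F$ and the minimality of $0$), and part (ii) is handled by duality, just as the paper does.
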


\begin{proof}
\begin{enumerate}[label=(\roman*)]
\item On the one hand, Proposition~\ref{pseudo-t-norm is conjunctive} guarantees that $F \unlhd_\mathcal{WAO}  \wedge$, for any $F \in \mathcal{WAO}_1(X)$. On the other hand, $ T_D(x,y)=0 \unlhd  T(x,y)$, for any $(x,y) \in (X\setminus\{ 1 \})^2$. If $x =1$ (resp.\ $y =1$), it holds that $ T_D(1,y)=y=  F(1,y)$  (resp.\ $ T_D(x,1)=x=  F(x,1)$ ). Hence, $ T_D(x,y) \unlhd  F(x,y)$, for any $x,y \in X$. Thus, 
 $ T_D \unlhd_\mathcal{WAO}  F \unlhd_\mathcal{WAO}  \wedge$, for any $F \in \mathcal{WAO}_1(X)$\,.
\item The proof is dual to that of (i).
\end{enumerate}
\end{proof}

In a bounded Trellis $(X,\unlhd_X,\wedge_X,\vee_X, 0, 1)$, the structures $(\mathcal{WAO}_1(X),\unlhd_\mathcal{WAO} , T_D,\wedge)$ and $(\mathcal{WAO}_0(X),\unlhd_\mathcal{WAO} , \vee,S_D)$ are bounded psosets.

\bigskip

\begin{remark}
The bounded psosets  $(\mathcal{WAO}_1(X),\unlhd_\mathcal{WAO} , T_D,\wedge)$ and $(\mathcal{WAO}_0(X),\unlhd_\mathcal{WAO} , \vee,S_D)$ are not necessary  bounded trellises, since the meet (resp. the join) of any two elements is not necessary an element of $\mathcal{WAO}_1(X)$ or $\mathcal{WAO}_0(X)$. 
\end{remark}

\bigskip

The following proposition shows a case when an element of $\mathcal{WAO}_1(X)$ (resp. an element of $\mathcal{WAO}_0(X)$) coincides with the meet (resp. the join) operation. It is particular case of the weaker types of increasing binary operations on a bounded trellis that coincide with the meet (resp. the join) operation.

 \bigskip
 
\begin{proposition}\label{characterizations of the meet and the join}
Let $(X,\unlhd,\wedge,\vee)$ be a bounded trellis and $F$ a binary operation on $X$. The following statements hold:
 \begin{enumerate}[label=(\roman*)]
\item If $F\in \mathcal{WAO}_1(X)$, idempotent and satisfying $F(x \wedge y,x \wedge y) \unlhd F(x,y)$, for any $x,y \in X$, then $F$ is the meet $(\wedge)$ operation of $X$;
\item If $F\in \mathcal{WAO}_0(X)$, idempotent and satisfying $F(x,y) \unlhd F(x \vee y,x \vee y)$, for any $x,y \in X$, then $F$ is the join $(\vee)$ operation  of $X$.
\end{enumerate}  
\end{proposition}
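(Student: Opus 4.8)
The plan is to obtain the desired equality by squeezing $F(x,y)$ between $x\wedge y$ from below and above and then invoking antisymmetry of $\unlhd$. Both bounds are essentially already available, so the argument is short. I would treat part (i) in full and then note that (ii) is the exact dual.

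For part (i), first I would recall that $F\in\mathcal{WAO}_1(X)$ is conjunctive by Proposition~\ref{pseudo-t-norm is conjunctive}, which gives the inequality $F(x,y)\unlhd x\wedge y$ for all $x,y\in X$. This is the ``upper'' bound and requires no extra work. The second step is to produce the matching ``lower'' bound from the two remaining hypotheses. Since $F$ is idempotent, applying idempotency to the single element $x\wedge y$ yields $F(x\wedge y,\,x\wedge y)=x\wedge y$. Substituting this into the standing hypothesis $F(x\wedge y,\,x\wedge y)\unlhd F(x,y)$ then gives $x\wedge y\unlhd F(x,y)$.

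Combining the two displays, I would conclude $F(x,y)\unlhd x\wedge y$ and $x\wedge y\unlhd F(x,y)$, so by the antisymmetry of the pseudo-order $\unlhd$ we get $F(x,y)=x\wedge y$ for all $x,y\in X$; that is, $F$ coincides with the meet operation $\wedge$. For part (ii) I would run the dual argument: disjunctivity of $F\in\mathcal{WAO}_0(X)$ (Proposition~\ref{pseudo-t-norm is conjunctive}(ii)) supplies $x\vee y\unlhd F(x,y)$, idempotency gives $F(x\vee y,\,x\vee y)=x\vee y$, and the hypothesis $F(x,y)\unlhd F(x\vee y,\,x\vee y)$ yields $F(x,y)\unlhd x\vee y$, so antisymmetry forces $F=\vee$.

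There is no genuine obstacle here; the content has effectively been front-loaded into the conjunctivity/disjunctivity result and into the idempotency and extra monotonicity hypotheses, which together pin $F$ from both sides. The only point demanding a little care is applying idempotency to the \emph{composite} element $x\wedge y$ (respectively $x\vee y$) rather than to $x$ and $y$ separately; this is exactly what converts the side condition $F(x\wedge y,x\wedge y)\unlhd F(x,y)$ into the reverse inequality needed to close the antisymmetry argument.
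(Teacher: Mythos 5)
Your proof is correct and follows essentially the same route as the paper: conjunctivity (Proposition~\ref{pseudo-t-norm is conjunctive}) gives $F(x,y)\unlhd x\wedge y$, while idempotency applied to $x\wedge y$ combined with the hypothesis $F(x\wedge y,x\wedge y)\unlhd F(x,y)$ gives the reverse inequality, and antisymmetry closes the argument, with (ii) by duality. Your version merely makes explicit the appeal to antisymmetry of $\unlhd$, which the paper leaves implicit.
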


\begin{proof}
 \begin{enumerate}[label=(\roman*)]
\item  On the one hand, since $F\in \mathcal{WAO}_1(X)$ which means that $F$ is conjunctive, it holds that  $F(x,y) \unlhd x \wedge y$, for any $x, y \in X$.  On the other hand, the fact that $F$ is idempotent and satisfying $F(x \wedge y,x \wedge y) \unlhd F(x,y)$, for any $x,y \in X$ implies that $ x \wedge y = F(x \wedge y,x \wedge y ) \unlhd F(x,y) $. Thus, $F$ is the meet operation $(\wedge) $ of $X$. 
\item The proof is dual to that of (i).
\end{enumerate} 
\end{proof}

\begin{remark}
 The converse of the above Proposition \ref{characterizations of the meet and the join} is immediate.
\end{remark}

\section{Constructions of some elements of $\mathcal{WAO}_0(X)$ and $\mathcal{WAO}_1(X)$}
In this section, we construct some elements of $\mathcal{WAO}_1(X)$ and $\mathcal{WAO}_0(X)$ on  bounded trellises.

\bigskip

Let $(X,\unlhd,\wedge,\vee,0,1)$ be a bounded trellis and $e \in X$. Let $T_e$ and $S_e$  two binary operations  on $X$ defined as follows:

\begin{center}
$T_e(x,y)=\left\{\begin{array}{ll}{x \wedge y} & \text{if } x=1 \text{ or }  y=1;\\
 {(x \wedge y) \wedge e} & \text{otherwise;}\end{array}\right.$
\end{center}
 and
\begin{center}
$S_e(x,y)=\left\{\begin{array}{ll}{x \vee y} & \text{if }  x=0  \text{ or } y=0; \\ 
{(x \vee y) \vee e} & \text{otherwise.} \end{array}\right.$
\end{center}

\bigskip

\begin{remark}\label{the pseudo-t-norm f_e}
In general, $T_e$ (resp.\ $S_e$) is not necessarily an element of $\mathcal{WAO}_1(X)$ (resp.\ $\mathcal{WAO}_0(X)$). Indeed, let $(X=\{ 0,a,b,c,d,e,f,1\},\unlhd, \wedge,\vee,0,1)$ be a bounded trellis given by the Hasse  diagram in Figure~\ref{fig3}.
\begin{figure}[H]
\[\begin{tikzpicture}
\tikzstyle{estun}=[>=latex,thick,dotted]
    \vertex[fill] (0) at (0,0)    [label=right:$ 0$]  {};
    \vertex[fill] (a) at (0,0.75)        [label=right:$a $]  {};
    \vertex[fill] (b) at (0.75,1.5)  [label=right:$ b$]  {};
    \vertex[fill] (c) at (0.75,2.25)   [label=right:$c $]  {};
    \vertex[fill] (d) at (0.75,3)   [label=right:$d $]  {};
    \vertex[fill] (e) at (-0.75,3)  [label=left:$e$]  {};
    \vertex[fill] (f) at (1.5,2.25)   [label=right:$f$]  {};
    \vertex[fill] (1) at (0,3.75)     [label=left:$1 $]  {};
    
    \path
        (0) edge (a)
        (a) edge (b)
        (b) edge (c)
        (c) edge (d)
        (d) edge (1)
        (a) edge (e)
        (b) edge (f)   
        (f) edge (d)                       
        (e) edge (1) 
        ;
          \draw[estun] (a) to (c) 
   ;  
\end{tikzpicture}\]
\caption{Hasse  diagram of the trellis $(X=\lbrace 0,a,b,c,d,e,f,1 \rbrace,\unlhd)$.}\label{fig3}
\end{figure}
  Sitting $x=f$ and $y=d$, then $x \unlhd y$ and $(x ,y) \in  (X^{tr})^2$. Since $T_e(f,c)= (f \wedge c) \wedge e= a \ntrianglelefteq T_e(d,c)=(d \wedge c) \wedge e =0$,  it follows that  $T_e$ is not  weakly-increasing. Therefore, $T_e \notin \mathcal{WAO}_1(X)$.
\end{remark} 

In view of remark~\ref{the pseudo-t-norm f_e}, we give sufficient conditions under which the binary operation  $T_e$ is an element of $\mathcal{WAO}_1(X)$. 

\begin{proposition}\label{T_e is a pseudo-t-norm}
Let $(X,\unlhd,\wedge,\vee ,0,1)$ be a bounded trellis. The following implications hold:
\begin{enumerate}[label=(\roman*)] 
\item If $e \in X^{\wedge-ass}$, then $T_e \in \mathcal{WAO}_1(X)$;
\item If $e \in X^{\vee-ass}$, then $S_e \in \mathcal{WAO}_0(X)$.
\end{enumerate}
\end{proposition}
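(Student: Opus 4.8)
The plan is to verify the four defining properties of a pseudo-t-norm from Definition~\ref{Pseudo-t-norm} for $T_e$; part (ii) for $S_e$ then follows by dualizing, since in the dual bounded trellis $\wedge$ becomes $\vee$ and the hypothesis $e\in X^{\vee-ass}$ turns into meet-associativity of $e$ in $X^*$, while $S_e$ on $X$ coincides with $T_e$ on $X^*$, so the duality between $\mathcal{WAO}_1$ and $\mathcal{WAO}_0$ transfers (i) to (ii). Throughout I use the two standing facts about the hypothesis $e\in X^{\wedge-ass}$: first, \emph{every} $3$-tuple containing $e$ is $\wedge$-associative (this is the definition of a $\wedge$-associative element), and second, $e$ is transitive by Proposition~\ref{associative element is transitive}. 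The two easy properties are disposed of at once: commutativity is immediate from commutativity of $\wedge$ and the symmetry of the clause ``$x=1$ or $y=1$'', and $1$ is neutral because $T_e(1,x)=1\wedge x=x$.

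For weak-increasingness I fix $x\unlhd y$ with $[x,y]\in X^{tr}$ and $z\in X$ and split on occurrences of $1$. If $z=1$ then $T_e(x,1)=x\unlhd y=T_e(y,1)$. If $x=1$, then $x\unlhd y$ together with $y\unlhd 1$ forces $y=1$, so both sides equal $z$. If $y=1$ and $x\neq1$ (with $z\neq1$), I reassociate using the $\wedge$-associativity of the triple $(z,x,e)$ to get $T_e(x,z)=(x\wedge z)\wedge e=z\wedge(x\wedge e)\unlhd z=T_e(1,z)$. The essential case is $x,y,z\neq1$: applying $\wedge$-associativity of the triples $(x,z,e)$ and $(y,z,e)$ gives $T_e(x,z)=x\wedge(z\wedge e)$ and $T_e(y,z)=y\wedge(z\wedge e)$, so with $w=z\wedge e$ the desired inequality $x\wedge w\unlhd y\wedge w$ is exactly Proposition~\ref{the meet and the join are p-increasing}(i) applied to the transitive pair $[x,y]$.

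The crux is weak-associativity. If any of $x,y,z$ equals $1$, the neutral-element law collapses both sides to the same value (for instance $y=1$ gives $T_e(x,T_e(1,z))=T_e(x,z)=T_e(T_e(x,1),z)$), so assume $x,y,z\neq1$; then the inner values $T_e(y,z)$ and $T_e(x,y)$ are also distinct from $1$, since $a\wedge b=1$ forces $a=b=1$. I now reduce each nested expression to a normal form by repeatedly invoking $\wedge$-associativity of triples containing $e$ and idempotence $e\wedge e=e$: one computes $T_e(x,T_e(y,z))=\bigl((x\wedge(y\wedge z))\wedge e\bigr)\wedge e=(x\wedge(y\wedge z))\wedge e$ and, by the mirror-image manipulation, $T_e(T_e(x,y),z)=((x\wedge y)\wedge z)\wedge e$. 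Hence the two sides agree precisely when $(x,y,z)$ is $\wedge$-associative, which is guaranteed under the hypothesis $[x,y,z]\in X^{\wedge-ass}$ or $[x,y,z]\in X^{\vee-ass}$ by Proposition~\ref{the meet and the join are p-increasing}(iii). With all four properties in hand, $T_e\in\mathcal{WAO}_1(X)$.

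The step I expect to be the main obstacle is exactly this last reduction. The whole point of assuming $e\in X^{\wedge-ass}$ is that it licenses reassociating and absorbing the \emph{two} separate copies of $e$ that the two parenthesizations of $T_e$ produce, even though $x,y,z$ themselves carry no associativity; only after this normalization does the residual $\wedge$-associativity of the $x,y,z$-part become the responsibility of Proposition~\ref{the meet and the join are p-increasing}(iii). Care is needed to apply the $\wedge$-associativity of $e$ only to triples that genuinely contain $e$, and to keep track of commutativity when moving $e$ into position, so that the normal form is reached without ever assuming an unavailable associativity among $x,y,z$.
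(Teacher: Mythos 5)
Your proposal is correct and takes essentially the same route as the paper's own proof: commutativity and neutrality checked directly, weak increasingness by the same case split on occurrences of $1$, weak associativity by using the $\wedge$-associativity of $e$ to reassociate and absorb the two copies of $e$ (leaving the residual reassociation of $(x,y,z)$ to the hypothesis $[x,y,z]\in X^{\wedge-ass}$ or $[x,y,z]\in X^{\vee-ass}$), and part (ii) by duality. If anything, your write-up is a bit more careful than the paper's: you treat the boundary cases ($x$, $y$, $z$, or an inner value equal to $1$) and the $X^{\vee-ass}$ alternative explicitly, and in the monotonicity step you first rewrite $T_e(x,z)=x\wedge(z\wedge e)$ so that Proposition~\ref{the meet and the join are p-increasing}(i) applies directly, whereas the paper asserts $(x\wedge z)\wedge e\unlhd(y\wedge z)\wedge e$ from $e\in X^{\wedge-ass}$ without spelling out the justification.
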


\[
\begin{tikzcd}[column sep = 2em, row sep = 2em]
& 1 \arrow[dash]{dl} \arrow[dash]{dr} & \\
a \arrow[dash]{dr} & & b \arrow[dash]{dl} \\
& 0 &
\end{tikzcd}
\]

\begin{proof}{\ }
We only give the proof of (i), as the proof of (ii) is similar. One  easily verifies that $T_e$ is commutative and satisfies the boundary condition. Now, let $(x,y) \in X \times X^{tr}$ such that $ x \unlhd y$ and $z \in X$. Then we discuss the following two possible cases: 
 \begin{enumerate}[label=(\roman*)]  
\item If $z=1$, then $T_e(x,z)=x \unlhd y= T_e(y,z)$.
\item If $z \neq 1$, then  we have three possible cases: 
 \begin{enumerate}[label=(\roman*)]  
\item If $x = 1$, then $y=1$ and $T_e(x,z)=z \unlhd z= T_e(y,z)$.
\item If $x \neq 1$ and $y=1$, then the fact that $e \in X^{\wedge-ass}$ implies that  $T_e(x,z)=(x \wedge z) \wedge e= (x \wedge e) \wedge z \unlhd z = T_e(y,z)$. Thus, $T_e(x,z)  \unlhd  T_e(y,z)$.
\item If $y \neq 1$, then $T_e(x,z)=(x \wedge z) \wedge e$ and $T_e(y,z)=(y \wedge z) \wedge e$. Since $ x \unlhd y$ and $y \in X^{tr}$, it follows that $x \wedge z \unlhd y \wedge z$. The fact that  $e \in X^{\wedge-ass}$ implies  $(x \wedge z) \wedge e \unlhd (y \wedge z) \wedge e$. Thus, $T_e(x,z)  \unlhd  T_e(y,z)$. 
\end{enumerate}
\end{enumerate}
Therefore, $T_e$ is  weakly-increasing.

Now, we prove that $T_e$ is weakly-associative.  Let $ x ,y,z \in X$ such that $[x,y,z] \in X^{\wedge-ass}$.  Since $e \in X^{\wedge-ass}$, it holds that
\begin{align*}
T_e(x,T_e(y,z))&= (x \wedge((y \wedge z) \wedge e)) \wedge e\\
&= ((x \wedge(y \wedge z)) \wedge e) \wedge e\\
&= (((x \wedge y) \wedge z) \wedge e) \wedge e\\
&=(((x \wedge y) \wedge e) \wedge z) \wedge e\\
&= (T_e(x,y)\wedge z) \wedge e\\
&=T_e(T_e(x,y),z)\,.
\end{align*}
  Hence, $T_e$ is weakly-associative. Therefore, $T_e \in \mathcal{WAO}_1(X)$.
\end{proof}

\begin{remark}
Particular cases: since $0,1 \in X^{\wedge-ass}$, we recognize that 
\begin{enumerate}[label=(\roman*)] 
\item $T_0 = T_D$ and $T_1 = \wedge$;
\item $S_0 = \vee$ and $S_1 = S_D$.
\end{enumerate}
\end{remark}

\begin{proposition}{\ }\label{}
Let $\mathbb{X}=(X,\unlhd,\wedge,\vee,0,1)$ be a bounded modular trellis and the binary operations  $Z$ and $Z^*$ defined as follows:
 
$Z(x, y)=\left\{\begin{array}{ll}{x \wedge y} & {\text { if } x \vee y=1};\\
 {0} & {\text { otherwise; }}\end{array}\right.$
 and
$Z^*(x, y)=\left\{\begin{array}{ll}{x \vee y} & {\text { if } x \wedge y=0}; \\ 
{1} & {\text { otherwise;}}\end{array}\right.$

Then, $Z \in \mathcal{WAO}_1(X)$ and $Z^* \in \mathcal{WAO}_0(X)$.

\end{proposition}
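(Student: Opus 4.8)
The plan is to verify the four defining conditions of a pseudo-t-norm from Definition~\ref{Pseudo-t-norm} for the operation $Z$, and then obtain the claim for $Z^*$ by duality via the earlier duality proposition (since the dual of a bounded modular trellis is again a bounded modular trellis, and the condition $x \vee y = 1$ dualizes to $x \meet y = 0$). First I would dispose of the easy conditions: commutativity of $Z$ is immediate from the commutativity of $\meet$ and of $\join$ (the case-split condition $x \join y = 1$ is symmetric in $x,y$), and the neutral element condition $Z(1,x) = x$ holds because $1 \join x = 1$ puts us in the first branch, giving $Z(1,x) = 1 \meet x = x$. Along the way the boundary value $Z(x,0)$ and $Z(x,1)$ can be checked for use later.

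Next I would establish weak-increasingness: assuming $[x,y] \in X^\tr$ with $x \unlhd y$ and an arbitrary $z \in X$, I must show $Z(x,z) \unlhd Z(y,z)$. I would split on which branch each side falls into. The only delicate subcase is when $x \join z = 1$ (so $Z(x,z) = x \meet z$) but we must compare against $Z(y,z)$. Here I expect to invoke Proposition~\ref{Interesting_implication_modular_trellis}: from $x \unlhd y$ (reading it as the hypothesis $x \unlhd z'$ in that proposition with a suitable choice) together with $x \join z = 1$, modularity forces $x \meet z \unlhd$ the relevant target, so that $Z(x,z) \unlhd Z(y,z)$ holds regardless of which branch $Z(y,z)$ occupies. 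When $x \join z \ne 1$, the left side is $0$, which is $\unlhd$ everything, so that case is trivial. The transitivity hypothesis $[x,y]\in X^\tr$ together with Proposition~\ref{the meet and the join are p-increasing} is what lets me pass $x \meet z \unlhd y \meet z$ through when both sides lie in the first branch.

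The main obstacle is weak-associativity: I must show $Z(x,Z(y,z)) = Z(Z(x,y),z)$ whenever $[x,y,z]\in X^{\meetAss}$ or $[x,y,z]\in X^{\joinAss}$. The difficulty is that the two-level case analysis (each application of $Z$ branches on whether a certain join equals $1$) proliferates, and the value $0$ returned in the ``otherwise'' branch interacts awkwardly with the outer application. My strategy would be to reduce the bookkeeping using the boundary fact $Z(w,0)=0$ (both branches collapse to $0$ when one argument is $0$, modulo checking) so that whenever an inner evaluation returns $0$, the outer evaluation also returns $0$, matching both sides. The genuinely nontrivial case is when both inner evaluations land in the first branch; here one side equals $x \meet (y \meet z)$ and the other $(x \meet y) \meet z$, which agree precisely because the associativity hypothesis on the triple $[x,y,z]$ supplies the needed equality (Proposition~\ref{the meet and the join are p-increasing}(iii)), while the guards $y \join z = 1$ and $x \join y = 1$ can be shown compatible using modularity and Proposition~\ref{Interesting_implication_modular_trellis} to propagate the ``$=1$'' condition between levels. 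Assembling these subcases and confirming that the guard conditions match on both sides is where the real work lies; once it is done, $Z \in \mathcal{WAO}_1(X)$ follows, and $Z^* \in \mathcal{WAO}_0(X)$ is its dual.
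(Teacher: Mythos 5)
Your plan has the same skeleton as the paper's proof (direct verification of the pseudo-t-norm axioms for $Z$, with $Z^*$ obtained dually, which is legitimate since the modular law is self-dual), but it defers exactly the step that carries all the mathematical content. Writing out both sides of the weak-associativity condition, one finds $Z(x,Z(y,z)) = x\meet y\meet z$ if ($y\join z=1$ and $x\join(y\meet z)=1$) and $0$ otherwise, while $Z(Z(x,y),z) = x\meet y\meet z$ if ($x\join y=1$ and $(x\meet y)\join z=1$) and $0$ otherwise; so the whole proposition reduces to proving that these two pairs of guard conditions are equivalent (the values then agree by the $\meet$-associativity hypothesis on the triple). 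This equivalence is where modularity actually enters, and it is the bulk of the paper's proof: from $x\join(y\meet z)=1$ one writes $y = y\meet(x\join(y\meet z))$, applies the modular law twice to get $y = (x\meet y)\join(y\meet z) = ((x\meet y)\join z)\meet y$, hence $y \unlhd (x\meet y)\join z$ and so $1 = y\join z \unlhd (x\meet y)\join z$; separately, $y\meet z \unlhd x\join y$ (from $y \unlhd x\join y$ and $y\join z = 1$) yields $1 = x\join(y\meet z) \unlhd x\join y$. Your proposal names plausible tools but explicitly leaves this argument as ``where the real work lies,'' so the key equivalence is asserted rather than proved. Note also that your bookkeeping shortcut --- ``whenever an inner evaluation returns $0$, the outer evaluation also returns $0$, matching both sides'' --- is not a simplification: an inner $0$ forces \emph{that} side to be $0$, but showing the \emph{other} side is then also $0$ is precisely the guard equivalence again.

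There is also a flawed step in your weak-increasingness argument. In the mixed case $x\join z = 1$ but $y\join z \neq 1$, we would have $Z(x,z) = x\meet z$ and $Z(y,z) = 0$, and Proposition~\ref{Interesting_implication_modular_trellis} gives only $x\meet z \unlhd y$, not $x\meet z = 0$; so the inequality does not hold ``regardless of which branch $Z(y,z)$ occupies.'' The correct resolution, which is the paper's, is that this branch cannot occur: from $x\unlhd y$ with $[x,y]\in X^\tr$, Proposition~\ref{the meet and the join are p-increasing}(ii) gives $1 = x\join z \unlhd y\join z$, so $y\join z = 1$ and $Z(y,z) = y\meet z$, after which $x\meet z \unlhd y\meet z$ follows from Proposition~\ref{the meet and the join are p-increasing}(i) (or from your route, $x\meet z\unlhd y$ and $x\meet z \unlhd z$, by the definition of infimum). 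On the positive side, your instinct to invoke Proposition~\ref{Interesting_implication_modular_trellis} is well aimed: the paper's step $y\meet z \unlhd x\join y$ above is literally an instance of it, even though the paper cites Proposition~\ref{two famous implications on modular trellis} at that point.
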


\begin{proof}{\ }
 The proof is similar to that of $Z^*$. One easily verifies that $T_\mathrm{Z}$ is commutative and satisfies the boundary conditions. Now, let $(x,y) \in X \times X^{tr}$ such that $ x \unlhd y$. Then we discuss the following two possible cases: 
 \begin{enumerate}[label=(\roman*)]  
\item If $T_\mathrm{Z}(x,z)=0$, then $T_\mathrm{Z}(x,z)=0 \unlhd T_\mathrm{Z}(y,z)$, for any $z \in X$.
\item If $T_\mathrm{Z}(x,z)=x \wedge z$, then $x \vee z =1$. Proposition~\ref{the meet and the join are p-increasing} guarantees that $x \vee z \unlhd y \vee z$, for any $z \in X$. Thus, $y \vee z=1$ and $T_\mathrm{Z}(y,z)= y \wedge z$. Since $y \in X^{tr}$, it holds that $T_\mathrm{Z}(x,z)=x \wedge z \unlhd y \wedge z= T_\mathrm{Z}(y,z)$, for any $z \in X$.  
\end{enumerate}
Hence, $T_\mathrm{Z}$ is  weakly-increasing. Now, we prove that $T_\mathrm{Z}$ is weakly-associative.  Let $ x ,y,z \in X$ such that $[x,y,z] \in X^\wedge$. On the one hand, we have that\\
$T_\mathrm{Z}\left(x, T_\mathrm{Z}(y, z)\right)$
$\quad=\left\{\begin{array}{ll}{x \wedge y \wedge z} & {\text { if } y \vee z=1 \text { and } x \vee(y \wedge z)=1}, \\ 
{0} & {\text { otherwise. }}\end{array}\right.$\\
On the other hand, it holds that\\
$T_\mathrm{Z}\left(T_\mathrm{Z}(x, y), z\right)$
$\quad=\left\{\begin{array}{ll}{x \wedge y \wedge z} & {\text { if } x \vee y=1 \text { and } z \vee (x \wedge y)=1}, \\ {0} & {\text { otherwise. }}\end{array}\right.$\\
We will show that $y\vee z=1$  and $x \vee (y \wedge z)=1$ implies $x \vee y=1$ and $z \vee (x \wedge y)=1$. The proof of the converse implication is similar. Firstly, since $y \unlhd x \vee y$ and $y \vee z=1$. Proposition~\ref{two famous implications on modular trellis} guarantees that $ y \wedge z \unlhd x \vee y $. Thus, $1= x \vee (y \wedge z) \unlhd x \vee y$. Hence, $x \vee y=1$. Secondly, since $x \vee(y \wedge z)=1$, it holds that $y= y \wedge(x \vee (y \wedge z))$. The fact that $\mathbb{X}$ is  modular  implies that
$y=y \wedge(x \vee (y \wedge z))=(x \wedge y) \vee(y \wedge z)= (x \wedge y) \vee z )\wedge y$. Thus,  
$y \unlhd(x \wedge y) \vee z$. On the other hand, since 
$z \unlhd(x \wedge y) \vee z$, it follows that  $y \vee z \unlhd (x \wedge y) \vee z$. Hence, $(x \wedge y) \vee z=1$. Since $[x,y,z] \in X^\wedge$, it holds that $x \meet (y \meet z) = (x \meet y) \meet z$.  Hence, $T_\mathrm{Z}$ is weakly-associative. Therefore, $T_\mathrm{Z}$ is a pseudo-t-norm on $X$ (i.e., $Z \in \mathcal{WAO}_1(X)$).
\end{proof}

In the following result, we propose a new ordinal sum construction of $\mathcal{WAO}_1(X)$ and $\mathcal{WAO}_0(X)$ on  bounded trellises according to~\cite{ertuugrul2015modified}. We start by the following immediate proposition.
 
\begin{proposition}
Let $(X,\unlhd,\wedge,\vee)$ be a trellis and $a, b \in X^{ass}$ such that  $a \unlhd b$. The following subintervals of $X$ defined as:
$$ [a, b]=\{x \in X | a \unlhd x \unlhd b\},$$
$$(a, b]=\{x \in X | a \lhd x \unlhd b\},$$
$$[a, b)=\{x \in X | a \unlhd x \lhd b\},$$ 
 $$(a, b)=\{x \in X | a \lhd x \lhd b\} ,$$
 are subtrellises of $X$.
\end{proposition}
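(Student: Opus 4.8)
The plan is to unwind the definition of subtrellis: for each candidate set $A$ I must verify that $x \wedge y \in A$ and $x \vee y \in A$ whenever $x, y \in A$. Since $a, b \in X^{ass}$, Proposition~\ref{associative element is transitive} tells me that $a$ and $b$ are transitive, hence in particular right- and left-transitive; these are the only properties of the endpoints I expect to use (indeed associativity is more than is needed for mere closure under $\wedge$ and $\vee$).

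First I would settle the closed interval $[a,b]$, which carries the substance. Let $x, y \in [a,b]$, so $a \unlhd x \unlhd b$ and $a \unlhd y \unlhd b$. For the meet, $a$ is a common lower bound of $\{x,y\}$ and $x \wedge y$ is by definition their greatest lower bound, so $a \unlhd x \wedge y$ directly (no transitivity required); and $x \wedge y \unlhd x \unlhd b$ together with left-transitivity of $b$ yields $x \wedge y \unlhd b$. Dually, $x \vee y \unlhd b$ because $b$ is a common upper bound and $x \vee y$ the least upper bound, while $a \unlhd x \unlhd x \vee y$ with right-transitivity of $a$ gives $a \unlhd x \vee y$. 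Hence $x \wedge y, x \vee y \in [a,b]$, and $[a,b]$ is a subtrellis.

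Because $(a,b]$, $[a,b)$ and $(a,b)$ are all contained in $[a,b]$, the computation above already delivers $a \unlhd x \wedge y \unlhd b$ and $a \unlhd x \vee y \unlhd b$ for $x, y$ drawn from any of them; the only thing left is to promote the relevant non-strict inequalities to strict ones. Two of these are easy and follow from antisymmetry alone: $a \lhd x \vee y$ holds because $x \unlhd x \vee y$ and $a \lhd x$ prevent $x \vee y = a$, and dually $x \wedge y \lhd b$ holds because $x \wedge y \unlhd x \lhd b$ prevents $x \wedge y = b$. These handle closure under $\vee$ for $(a,b]$ and closure under $\wedge$ for $[a,b)$.

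The step I expect to be the real obstacle is the complementary strict pair: $a \lhd x \wedge y$ (needed for $\wedge$-closure of $(a,b]$ and $(a,b)$) and, dually, $x \vee y \lhd b$ (needed for $\vee$-closure of $[a,b)$ and $(a,b)$). Here the antisymmetry shortcut is unavailable, since $x \wedge y$ lies below both $x$ and $y$ and can in principle collapse onto $a$. The assertion $a \lhd x \wedge y$ for all $x, y$ with $a \lhd x$ and $a \lhd y$ is precisely meet-irreducibility of $a$, and $x \vee y \lhd b$ is join-irreducibility of $b$. I would therefore scrutinize whether the associativity hypothesis actually forces this irreducibility before claiming the open and half-open cases; if it does not, the safe statement is that $[a,b]$ is always a subtrellis, while the strict versions need the extra assumption that $a$ is meet-irreducible and $b$ is join-irreducible.
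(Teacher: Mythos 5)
The paper gives you nothing to compare against here: it declares the proposition ``immediate'' and supplies no proof at all. Your argument for the closed interval $[a,b]$ is correct and complete --- $a \unlhd x \wedge y$ comes free from the definition of infimum, $x \wedge y \unlhd b$ follows from $x \wedge y \unlhd x \unlhd b$ together with left-transitivity of $b$ (guaranteed by $b \in X^{ass}$ via Proposition~\ref{associative element is transitive}), and the join half is dual. Likewise your antisymmetry promotions are sound, so $(a,b]$ is indeed closed under $\vee$ and $[a,b)$ under $\wedge$.

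The step you declined to claim is exactly where the proposition breaks: the statement is in fact false for the half-open and open intervals. Take $X$ to be the four-element Boolean lattice $\{0,p,q,1\}$ with $p$ and $q$ incomparable. Being a lattice, it is a trellis in which every element is associative, so $a=0$ and $b=1$ satisfy the hypotheses; yet $(0,1]=\{p,q,1\}$ contains $p,q$ with $p\wedge q=0\notin(0,1]$, and dually $[0,1)=\{0,p,q\}$ is not closed under $\vee$, while $(0,1)=\{p,q\}$ is closed under neither operation. So the collapse $x\wedge y=a$ you worried about genuinely occurs, and your corrected formulation --- $[a,b]$ is always a subtrellis, while $\wedge$-closure of the intervals open at $a$ requires $a$ to be meet-irreducible and $\vee$-closure of those open at $b$ requires $b$ to be join-irreducible --- is what the proposition should say. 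Note finally that the paper only ever uses the closed intervals $[a,1]$ and $[0,a]$ as domains of $V$ and $W$ in Theorem~\ref{construction 01}, so the case you proved is precisely the one needed downstream; the occurrences of $[a,1)$ and $(0,a]$ there are mere case conditions and do not require those sets to be subtrellises.
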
 

\begin{theorem}\label{construction 01}
Let  $(X,\unlhd,\wedge,\vee,0,1)$ be a bounded trellis and $a \in X^{ass} \backslash\{0,1\}$. If  $V:[a, 1]^{2} \rightarrow[a, 1]$ an element of $\mathcal{WAO}_1([a, 1])$  and $W:[0, a]^{2} \rightarrow[0, a]$ an element of  $\mathcal{WAO}_0([0, a])$, then the binary operations $T: X^{2} \rightarrow X$ and $S: X^{2} \rightarrow X$ defined as follows:
\begin{center}
 $T(x, y)=\left\{\begin{array}{ll}{x \wedge y} & {\text {if } x=1 \text { or } y=1}; \\
 {V(x, y)} & {\text {if } x, y \in[a, 1)}; \\
  {x \wedge y \wedge a} & {\text {otherwise;}}\end{array}\right.$
 \end{center} 
 and 
\begin{center}
$S(x, y)=\left\{\begin{array}{ll}{x \vee y } & {\text {if } x=0 \text { or } y=0}; \\ 
{W(x, y) } & {\text {if } x, y \in(0, a]}; \\
 {x \vee y \vee a } & {\text {otherwise;}}\end{array}\right.$
\end{center}
are elements of $\mathcal{WAO}_1(X)$ and $\mathcal{WAO}_0(X)$, respectively.
\end{theorem}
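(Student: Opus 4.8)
The plan is to verify directly the four defining properties of a pseudo-t-norm from Definition~\ref{Pseudo-t-norm} for $T$ --- commutativity, the neutral element $1$, weak-increasingness and weak-associativity --- the assertions for $S$ then being the order-dual. Commutativity is immediate from the commutativity of $\wedge$ and of $V$ together with the symmetry of the three defining clauses, and $T(1,x)=1\wedge x=x$ settles the neutral element. Throughout I would work with the partition $X=\{1\}\sqcup M\sqcup L$, where $M:=[a,1)$ and $L:=X\setminus[a,1]$, and record three facts that make the case analysis uniform: \textbf{(a)} since $a\in X^{ass}$, Proposition~\ref{associative element is transitive} gives that $a$ is transitive, so the right-transitivity $a\unlhd u\unlhd v\Rightarrow a\unlhd v$ and the middle-transitivity $u\unlhd a\unlhd v\Rightarrow u\unlhd v$ are available; \textbf{(b)} because $a\in X^{\wedge-ass}$, any meet in which $a$ occurs may be freely regrouped and repeated occurrences of $a$ collapse, so the map $\pi(w):=w\wedge a$ is a $\wedge$-homomorphism of $X$ into $[0,a]$ and is order-preserving (whence $u\unlhd v\Rightarrow u\wedge a\unlhd v\wedge a$); \textbf{(c)} $\pi\equiv a$ on $M$ and $\pi(L)\subseteq[0,a)$, while $V$ is conjunctive with image in $[a,1]$ (Proposition~\ref{pseudo-t-norm is conjunctive} applied to the subtrellis $[a,1]$, which is a subtrellis by the preceding proposition).

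For weak-increasingness I would fix $x\unlhd y$ with $[x,y]\in X^{tr}$ and $z\in X$ and compare $T(x,z)$ with $T(y,z)$ by cases on the regions of $x,y,z$; right-transitivity of $a$ makes the regions monotone ($a\unlhd x\Rightarrow a\unlhd y$), which prunes the case list. The case $z=1$ is just $x\unlhd y$; the case $x,y,z\in M$ is the weak-increasingness of $V$ (the transitivity of $x,y$ being inherited by $[a,1]$); and in the cases where both outputs fall in the \emph{else} clause they have the form $u\wedge z\wedge a$, so the inequality follows from Proposition~\ref{the meet and the join are p-increasing}(i) composed with fact \textbf{(b)}. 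The only genuinely mixed step compares an output $\unlhd a$ with one $\unrhd a$ (e.g. $x\in L$, $y,z\in M$), and is closed by the middle-transitivity of $a$ together with $a\unlhd V(y,z)$.

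The heart of the argument is weak-associativity: assuming $[x,y,z]\in X^{\wedge-ass}$ or $[x,y,z]\in X^{\vee-ass}$, I must show $T(x,T(y,z))=T(T(x,y),z)$. If any argument equals $1$ the identity reduces to the neutral-element law. If $x,y,z\in M$ then both sides stay in $M$ and the identity is exactly the weak-associativity of $V$ on $[a,1]$, the hypothesis transferring because an element associative in $X$ is \emph{a fortiori} associative in the subtrellis $[a,1]$. In every remaining case at least one argument lies in $L$, so each non-neutral application of $T$ falls in the \emph{else} clause; collapsing repeated occurrences of $a$ via fact \textbf{(b)}, and noting that any intermediate $V$-value lies in $[a,1]$ and hence projects to $a$ under $\pi$ (so it is invisible to the reduction), both sides become $\pi(x)\wedge(\pi(y)\wedge\pi(z))$ and $(\pi(x)\wedge\pi(y))\wedge\pi(z)$ respectively.

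The main obstacle is the comparison of these two projected triples. Whenever one of $x,y,z$ lies in $M$ its projection equals $a$ and absorbs, so associativity of the projected meet is automatic and the two sides coincide with no further hypothesis. The delicate case is $x,y,z\in L$: here no projection is $a$, and equality rests on the meet-associativity of the triple $(x,y,z)$ itself, which is precisely what Proposition~\ref{the meet and the join are p-increasing} supplies from the standing hypothesis $[x,y,z]\in X^{\wedge-ass}$ or $[x,y,z]\in X^{\vee-ass}$. Careful bookkeeping at the boundary (arguments or outputs equal to $a$, and the membership $V(x,y)\in[a,1]$) then completes $T\in\mathcal{WAO}_1(X)$, and the order-dual argument yields $S\in\mathcal{WAO}_0(X)$.
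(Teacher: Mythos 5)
Your proposal is correct, and its skeleton is the same as the paper's: direct verification of commutativity, the neutral element, weak-increasingness and weak-associativity, with a case split according to position relative to $\{1\}$, $[a,1)$ and $X\setminus[a,1]$, the block where all arguments lie in $[a,1)$ being discharged by the weak-increasingness/weak-associativity of $V$ on the subtrellis $[a,1]$. Where you genuinely diverge is in how the remaining cases are closed. The paper grinds through each configuration separately, writing unparenthesized strings such as $x\wedge y\wedge z\wedge a$ and regrouping them under the blanket justification ``$a\in X^{ass}$''; you instead isolate one lemma — that $\pi(w)=w\wedge a$ is a $\wedge$-homomorphism of $X$ into $[0,a]$, which indeed follows from $a\in X^{\wedge\text{-}\mathrm{ass}}$ alone — so that every else-clause value is a projected meet and the two sides of the associativity identity collapse uniformly to $\pi(x)\wedge(\pi(y)\wedge\pi(z))$ and $(\pi(x)\wedge\pi(y))\wedge\pi(z)$. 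This buys two things the paper leaves implicit: in every mixed case one projection equals $a$, so no hypothesis on the triple is needed there; and the standing hypothesis $[x,y,z]\in X^{\wedge\text{-}\mathrm{ass}}$ (via Proposition~\ref{the meet and the join are p-increasing}(iii)) is invoked exactly once, in the case $x,y,z\in X\setminus[a,1]$, which is precisely where the paper's unbracketed meet silently uses it. Your weak-increasingness argument (Proposition~\ref{the meet and the join are p-increasing}(i) composed with order-preservation of $\pi$, plus middle-transitivity of $a$ for cross-region comparisons) likewise matches the paper's cases but is tidier; just note that a couple of configurations you do not list explicitly (e.g.\ $y=1$ with $x,z\in[a,1)$, handled by $T|_{[a,1]^2}=V$ and $V$'s weak-increasingness) still need a line each. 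Finally, both proofs share the same caveat: the definition of weak-associativity also demands the identity when only $[x,y,z]\in X^{\vee\text{-}\mathrm{ass}}$ holds; the paper's proof ignores this disjunct entirely, while you at least discharge it by citing the paper's (unproved) Proposition~\ref{the meet and the join are p-increasing}, so your treatment is, if anything, the more complete of the two.
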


\begin{proof}{\ }
The proof is similar to that of $S$. One  easily verifies that $T$ is commutative and satisfies the boundary conditions. Now, let $x,y \in X \times X^{tr}$ such that $x \unlhd y$. Then we discuss the following possible cases: 
 \begin{enumerate}[label=(\roman*)] 
 \item If $x=1$ or $z=1$, then $T(x, z)=T(y, z)$.
\item If $x,z \in [a, 1)$, then, also $a \unlhd y$ and $T(x, z)=V(x, z) \unlhd V(y, z)=T(y, z)$.
\item If $x \notin [a, 1)$ and $z \in [a, 1)$, it holds that $T(x, z)=x \wedge a$ and we have three possible cases:
 \begin{enumerate}[label=(\roman*)] 
 \item If $y=1$, then $T(y, z)=z $. Since $a \in X^{ass}$, then $T(x, z)=x \wedge a \unlhd z=T(y, z)$.
\item If $y \in [a, 1)$, then $T(y, z)=V(y,z) \in [a, 1)$. Since, $a \in X^{ass}$, it follows that  $T(x, z)=x \wedge a \unlhd V(y,z)=T(y, z)$.
\item If $y \notin [a, 1]$, then $T(y, z)=y \wedge a$. Since $a \in X^{ass}$, then it follows that $T(x, z)=x \wedge a  \unlhd y \wedge a= T(y, z)$.
\end{enumerate}
\item If $x \notin [a, 1]$ and $z \notin [a, 1]$, then $T(x, z)=x \wedge z \wedge a$ and $T(y, z)= y \wedge z \wedge a$. Thus, Proposition~\ref{the pseudo-t-norm f_e} guarantees that $T(x, z)=x \wedge z \wedge a \unlhd y \wedge z \wedge a = T(y, z)$.
 \end{enumerate}
 Hence, $T$ is weakly-increasing. Next, we prove that $T$ is weakly-associative. Let $ x ,y,z \in X$ such that $[x,y,z] \in X^\wedge$. The proof is split into all possible cases.
\begin{enumerate}[label=(\roman*)] 
 \item If $x,y \in [a, 1)$, then we have two cases:
  \begin{enumerate}[label=(\roman*)] 
  \item[(a)] If $z \in [a, 1)$, then:
\begin{center}
 $\begin{aligned} 
 T(x, T(y, z)) &=T(x, V(y, z)) \\
  &=V(x, V(y, z)) \\ &=V(V(x, y), z) \\ 
  &=T(V(x, y), z) \\ 
  &=T(T(x, y), z).
   \end{aligned}$
\end{center}
 \item[(b)] If $z \in X  \setminus [a, 1)$, then:
\begin{center} 
 $\begin{aligned}
  T(x, T(y, z)) &=T(x, y \wedge z \wedge a) \\
  &=x \wedge (y \wedge z \wedge a) \wedge a  \\
   &=z \wedge a  \text{\quad (car, } a \in X^{ass})\\
    &=V(x, y) \wedge z \wedge a \\ 
    &=T(V(x, y), z) \\
     &=T(T(x, y), z).
      \end{aligned}$
\end{center}
 \end{enumerate}
 \item If $x \in [a, 1)$ and $y \in X \setminus [a, 1)$, then we have two cases:
 \begin{enumerate}[label=(\roman*)]
  \item[(a)] If $z \in [a, 1)$, then this case have been studied in $(i.b)$.
 \item[(b)] If $z \in X \setminus [a, 1)$, then:
\begin{center}
 $\begin{aligned} 
 T(x, T(y, z)) &=T(x, y \wedge z \wedge a) \\ 
 &=x \wedge (y \wedge z \wedge a) \wedge a\\
  &=(x \wedge y \wedge a) \wedge z \wedge a \text{\quad (car, } a \in X^{ass})\\
  &=T(x \wedge y \wedge a, z)\\
   &=T(T(x, y), z). 
   \end{aligned}$
\end{center}
 \end{enumerate}

 \item If $x,y \in X \setminus [a, 1)$, then we have two cases: 
 \begin{enumerate}[label=(\roman*)]
\item[(a)] If $z \in [a, 1)$, then this case have been studied in $(ii.b)$.
\item[(b)] If $z \in X \setminus [a, 1)$, then: 
\begin{center}
$\begin{aligned}
 T(x, T(y, z)) &=T(x, y \wedge z \wedge a) \\
  &=x \wedge y \wedge z \wedge a \\ 
  &=T(x \wedge y \wedge a, z) \\
   &=T(T(x, y), z).
   \end{aligned}$
\end{center}
\end{enumerate}
 \end{enumerate}
Hence, $T$ is weakly-associative on $X$. Therefore, $T \in \mathcal{WAO}_1(X)$.
\end{proof}

One easily Observes that  $T$ and  $S$ on a bounded trellis considered in Theorem~\ref{construction 01} can be described as  follows:
\begin{center}
$T(x, y)=\left\{\begin{array}{ll}{
V(x, y)} & {\text { if }(x, y) \in[a, 1)^{2}}; \\
 {y \wedge a} & {\text { if } x \in[a, 1), y \| a}; \\
  {x \wedge a} & {\text { if } y \in[a, 1), x \| a};\\
   {x \wedge y \wedge a} & {\text { if } x\|a, y\| a}; \\
    {x \wedge y} & {\text { otherwise; }}
    \end{array}\right.$\end{center}
    and
\begin{center}
    $S(x, y)=\left\{\begin{array}{ll}{
    W(x, y)} & {\text { if }(x, y) \in(0, a]^{2}}; \\ 
    {y \vee a} & {\text { if } x \in(0, a], y \| a}; \\
     {x \vee a} & {\text { if } y \in(0, a], x \| a}; \\
      {x \vee y \vee a} & {\text { if } x\|a, y\| a}; \\
       {x \vee y} & {\text { otherwise.}}
       \end{array}\right.$
\end{center}

Thus, we get $T$ and  $S$ by the next figures:
\begin{center}
\includegraphics[width=6cm]{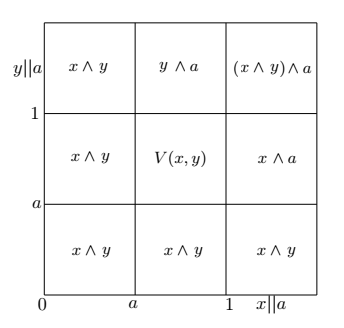}  %\vspace*{-1cm}
\end{center} 

\section{T-distributivity on  bounded trellises}
In this section, we introduce the notion of $F$-distributivity for an arbitrary binary operation $F$ on a bounded trellis and we determine a relationship between  t-norms and pseudo-t-norms. Moreover, we characterize pseudo-t-norms (resp.\ pseudo-t-conorms) on a bounded trellis with respect to the $F$-distributivity.

\begin{definition} 
 Let $(X,\unlhd,\wedge,\vee,0,1)$ be a bounded trellis and $F_{1}$ and $F_{2}$ two binary operations on $X$. If $F_{1}\left(x, F_{2}(y, z)\right)=F_{2}\left(F_{1}(x, y), F_{1}(x, z)\right)$, for any $x, y, z \in X$ where at least one of the elements $y, z$ is not $1$ or not $0$ (i.e., $(0,0) \neq (y,z)$ and $(1,1) \neq (y,z))$, then $F_{1}$ is distributive over $F_{2}$ ($F_{1}$ is $F_{2}$-distributive, for short).

\end{definition}

\begin{proposition}\label{}
Let $(X,\unlhd,\wedge,\vee,0,1)$ be a bounded trellis and $T^*$ is a pseudo-t-norm on $X$. If $T^*$ is $T$-distributive, for any $T \in \mathcal{WAO}_1(X)$, then $T^* \in \mathcal{AO}_1(X)$.
\end{proposition}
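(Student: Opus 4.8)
The plan is to exploit the fact that the drastic product $T_D$ is itself a pseudo-t-norm, so the hypothesis applies to the pair $(T^*,T_D)$, and to show that $T_D$-distributivity is rigid enough to force $T^*=T_D$; since $T_D\in\mathcal{AO}_1(X)$ by Example~\ref{T_D and S_D}, this already yields $T^*\in\mathcal{AO}_1(X)$. Recall that $T_D$ is a t-norm, hence $T_D\in\mathcal{WAO}_1(X)$, so by assumption $T^*$ is $T_D$-distributive: $T^*(x,T_D(y,z))=T_D(T^*(x,y),T^*(x,z))$ whenever $(y,z)\neq(0,0),(1,1)$. I would specialize this to $y=1$ and arbitrary $z\neq 1$ (so that $(1,z)\neq(1,1)$ and the identity is available). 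The left-hand side collapses, via $T_D(1,z)=1\wedge z=z$, to $T^*(x,z)$, while the right-hand side becomes $T_D(x,T^*(x,z))$ after using the neutral element $T^*(x,1)=x$.

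The key step is to evaluate $T_D(x,T^*(x,z))$ for $x\neq 1$. By Proposition~\ref{pseudo-t-norm is conjunctive}, $T^*$ is conjunctive, so $T^*(x,z)\unlhd x\wedge y$ with $y=z$, i.e. $T^*(x,z)\unlhd x\wedge z$. If $T^*(x,z)$ were equal to $1$, this would give $1\unlhd x\wedge z$, and since $x\wedge z\unlhd 1$ always holds, antisymmetry forces $x\wedge z=1$; then $1$ is a lower bound of $\{x,z\}$, so $1\unlhd x$, whence $x=1$, contradicting $x\neq 1$. Therefore $T^*(x,z)\neq 1$, and since neither $x$ nor $T^*(x,z)$ equals $1$, the definition of $T_D$ gives $T_D(x,T^*(x,z))=0$. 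Combining with the previous paragraph yields $T^*(x,z)=0$ for all $x\neq 1$ and $z\neq 1$. Together with the boundary values $T^*(1,z)=z$ and $T^*(x,1)=x$ coming from the neutral element, this shows $T^*=T_D$ on all of $X^2$, and hence $T^*\in\mathcal{AO}_1(X)$.

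As a complementary observation (and an independent route to the increasingness half of the t-norm axioms, if one prefers to recover increasingness and associativity separately rather than collapsing to $T_D$), one can apply the same hypothesis to the pseudo-t-norm $\wedge\in\mathcal{WAO}_1(X)$. Then $\wedge$-distributivity gives $T^*(z,x\wedge y)=T^*(z,x)\wedge T^*(z,y)$, so that $x\unlhd y$, i.e. $x\wedge y=x$, yields $T^*(z,x)=T^*(z,x)\wedge T^*(z,y)$, that is $T^*(z,x)\unlhd T^*(z,y)$; this upgrades weak increasingness to full increasingness for all $x,y$, not only for transitive ones.

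I expect the only delicate point in the whole argument to be the non-transitivity of $\unlhd$ on a trellis: one cannot chain $T^*(x,z)\unlhd x\wedge z\unlhd x$ to conclude $T^*(x,z)\unlhd x$, so the non-triviality $T^*(x,z)\neq 1$ must be extracted directly from the implication $x\wedge z=1\Rightarrow x=z=1$, as above. Everything else (handling the distributivity domain restriction $(y,z)\neq(0,0),(1,1)$ and the boundary cases $x=1$ or $z=1$) is routine, since the excluded and boundary cases are exactly where $T^*$ and $T_D$ coincide through the neutral element.
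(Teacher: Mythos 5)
Your proof is correct, but it takes a genuinely different route from the paper's. The paper stays at the axiomatic level: it instantiates the hypothesis at $T=\wedge$ to upgrade weak increasingness to full increasingness (this is exactly your "complementary observation"), and then instantiates it at $T=T^*$ itself, using conjunctivity and the freshly obtained monotonicity to bound $T^*(x,T^*(y,z))$ and $T^*(T^*(x,y),z)$ by each other and conclude associativity via antisymmetry; commutativity and the neutral element then give $T^*\in\mathcal{AO}_1(X)$. You instead instantiate the hypothesis only at $T=T_D$ and show that the distributivity identity, evaluated at $(y,z)=(1,z)$ with $z\neq 1$, collapses $T^*$ pointwise to $T_D$; your derivation of $T^*(x,z)\neq 1$ by antisymmetry from $T^*(x,z)\unlhd x\wedge z$ (rather than by an illegitimate transitive chain) is precisely the care required on a trellis, and the excluded pairs $(0,0)$, $(1,1)$ in the paper's definition of distributivity are indeed avoided by your choice $y=1$, $z\neq 1$. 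The two arguments buy different things. The paper's proof is modular: it shows which instance of the hypothesis yields which t-norm axiom (distributivity over $\wedge$ gives monotonicity, self-distributivity gives associativity), so it would survive weakening the hypothesis to just those two instances. Your proof yields a strictly stronger conclusion: the hypothesis forces $T^*=T_D$, so, combined with the paper's very next proposition (that $T_D$ is $T$-distributive for every $T\in\mathcal{WAO}_1(X)$), $T_D$ is the \emph{unique} pseudo-t-norm distributive over all of $\mathcal{WAO}_1(X)$. This rigidity, invisible in the paper's argument, shows that the proposition's hypothesis is much more restrictive than its conclusion suggests.
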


\begin{proof}{\ }
Let $x,y \in X$ such that $x \unlhd y$. The fact that the meet operation $(\wedge)$ is a pseudo-t-norm on $X$ implies that  $T^*(x,z) \wedge T^*(y,z)= T^*(x \wedge y,z)= T^*(x,z)$. Thus, $T^*$ is increasing. Now, we prove that $T^*$ is associative. Since $T^*$ is a pseudo-t-norm on $X$ and increasing, it holds that  $T^*(x,T^*(y,z))= T^*(T^*(x,y),T^*(x,z)) \unlhd T^*(T^*(x,y), z)$ and $T^*(T^*(x,y), z)=T^*(T^*(x,z),T^*(y,z))\unlhd T^*(x,T^*(y,z))$, for any $x,y,z \in X$. Thus, $T^*(x,T^*(y,z))=T^*(T^*(x,y), z)$, for any $x,y,z \in X$. Hence, $T^*$ is associative. Since $T^*$ is commutative and has $1$ as a neutral element, it follows that $T^*$ is a t-norm on $X$.
\end{proof}

In the following proposition, we give a  $T$-distributive t-norm, for any pseudo-t-norm $T$ on bounded trellis.
\begin{proposition}\label{} 
Let  $(X,\unlhd,\wedge,\vee)$ be a bounded trellis. Then the smallest pseudo-t-norm (t-norm) $T_D$  is $T$-distributive, for any $T \in \mathcal{WAO}_1(X)$.
\end{proposition}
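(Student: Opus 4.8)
The plan is to establish the defining identity $T_D\bigl(x, T(y,z)\bigr) = T\bigl(T_D(x,y), T_D(x,z)\bigr)$ for all admissible triples $(x,y,z)$ (those with $(y,z)\neq(0,0)$ and $(y,z)\neq(1,1)$) by a direct case split on the first slot $x$, using only that $1$ is the neutral element of both $T_D$ and $T$, that $T$ is conjunctive, and that $T$ obeys the boundary condition $T(\,\cdot\,,0)=0$. Before splitting, I would record the decisive observation that $(y,z)\neq(1,1)$ forces $T(y,z)\neq 1$: by Proposition~\ref{pseudo-t-norm is conjunctive} the pseudo-t-norm $T$ is conjunctive, so $T(y,z)\unlhd y\wedge z$, and $T(y,z)=1$ would give $y\wedge z=1$, whence $y=z=1$, contradicting the hypothesis. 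This is precisely the point at which the exclusion of $(1,1)$ is used.

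First I would treat $x=1$. Since $1$ is neutral for $T_D$ we get $T_D\bigl(1,T(y,z)\bigr)=T(y,z)$ on the left, while $T_D(1,y)=y$ and $T_D(1,z)=z$ make the right-hand side $T(y,z)$ as well, so the two sides agree. For the complementary case $x\neq 1$, the formula for $T_D$ simplifies to $T_D(x,w)=x$ when $w=1$ and $T_D(x,w)=0$ otherwise (this covers $x=0$ too, since $0\wedge 1=0$). Because $x\neq 1$ and $T(y,z)\neq 1$ by the observation above, the left-hand side collapses to $T_D\bigl(x,T(y,z)\bigr)=0$.

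It then remains to show the right-hand side $T\bigl(T_D(x,y),T_D(x,z)\bigr)$ also equals $0$ when $x\neq 1$. I would split according to the values of $y$ and $z$: since $(y,z)=(1,1)$ is excluded, at least one of $y,z$ differs from $1$, so at least one of $T_D(x,y),T_D(x,z)$ equals $0$; the boundary property $T(x,0)=0$ of Proposition~\ref{boundary of Pseudo-t} (together with its special case $T(0,0)=0$) then forces the whole expression to $0$. Hence both sides vanish and the identity holds. The only real obstacle is bookkeeping: one must be disciplined in invoking $(y,z)\neq(1,1)$ to guarantee $T(y,z)\neq 1$, for without it the identity genuinely fails — at $(y,z)=(1,1)$ the right side becomes $T(x,x)$, which need not equal $x$ since a pseudo-t-norm need not be idempotent. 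Notably, neither weak increasingness nor weak associativity of $T$ is invoked; only conjunctivity and the boundary condition are required.
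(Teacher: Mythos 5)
Your proof is correct and takes essentially the same route as the paper's: a case split on $x=1$ versus $x\neq 1$, using conjunctivity of $T$ to rule out $T(y,z)=1$ and the boundary property $T(\cdot,0)=0$ to collapse both sides to $0$. The only difference is cosmetic: you hoist the observation $T(y,z)\neq 1$ to the front and merge the paper's subcases on $y$ into the single remark that at least one of $T_D(x,y)$, $T_D(x,z)$ vanishes.
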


\begin{proof}{\ }
 Let $T$ be an arbitrary pseudo-t-norm on $X$. We must show that the equality
        $$T_D(x, T(y, z))=T\left(T_D(x, y), T_D(x, z)\right) $$

holds for every element $x, y, z$ of $X$ such that  $y \neq 1$ or $z \neq 1$. Suppose that $z \neq 1$. Then we discuss the following two possible cases: 
 \begin{enumerate}[label=(\roman*)]  
\item If $x=1$, then  $T_D(x, T(y, z))=T(y, z)=T(T_D(x, y), T_D(x, z))$.
\item If $x\neq 1$, we have two possible cases:
    \begin{enumerate}[label=(\roman*)]  
    \item If $y=1$, then $T_D(x, T(y, z))=T_D(x, z)=0$ and $T(T_D(x, y), T_D(x, z))=T(x,0)$. Proposition~\ref{boundary of Pseudo-t} guarantees that  $T(x,0)=0$, for any $x \in X$. Then the equality is holds.
	\item If $y \neq 1$, then $T(y, z) \unlhd y \unlhd 1$ and $y \neq 1$, for any pseudo-t-norm $T$. Thus, $T(y, z) \neq 1$. Hence, $T_D(x, T(y, z))=0$ and $T\left(T_D(x, y), T_D(x, z)\right)=T(0,0)=0$.
    \end{enumerate}
\end{enumerate}
Therefore, $T_D$ is $T$-distributive, for any $T \in \mathcal{WAO}_1(X)$.
\end{proof}

\begin{proposition}\label{pseudo-t-norm weaker than t-norm}
Let $(X,\unlhd,\wedge,\vee,0,1)$ be a bounded trellis, $T$ a pseudo-t-norm and $T^*$ is a t-norm on $X$. If $T$ is $T^*$-distributive, then $T  \unlhd_\mathcal{WAO} T^*$ (i.e., $T$ is weaker that $T^*$).
\end{proposition}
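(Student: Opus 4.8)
The plan is to prove the pointwise inequality $T(x,y) \unlhd T^*(x,y)$ for all $x,y \in X$, which is precisely the assertion $T \unlhd_\mathcal{WAO} T^*$. The only leverage the hypothesis gives is the identity $T\bigl(x,T^*(y,z)\bigr)=T^*\bigl(T(x,y),T(x,z)\bigr)$, valid whenever $(y,z)\notin\{(0,0),(1,1)\}$, so I would look for a substitution of the three arguments that collapses this identity into a direct comparison between $T(x,y)$ and $T^*(x,y)$.

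First I would dispose of the degenerate case $y=1$: here $T(x,1)=x$ (the neutral element of the pseudo-t-norm $T$) and $T^*(x,1)=x$ (the neutral element of the t-norm $T^*$), so the two values coincide and the inequality is immediate. This case must be separated off in any event, since the substitution I intend to use lands on exactly the forbidden triple $(1,1)$ when $y=1$, where $T^*$-distributivity is not assumed.

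For $y\neq 1$ the key step is to instantiate the distributivity identity at $z=1$. Because $(y,1)\neq(0,0)$ and, since $y\neq 1$, also $(y,1)\neq(1,1)$, the hypothesis applies. Substituting $T^*(y,1)=y$ on the left and $T(x,1)=x$ on the right, and using commutativity, the identity reduces to
$$T(x,y)=T^*\bigl(x,T(x,y)\bigr).$$
Finally I would estimate the right-hand side. By Proposition~\ref{pseudo-t-norm is conjunctive} the pseudo-t-norm $T$ is conjunctive, so $T(x,y)\unlhd x\wedge y\unlhd y$. Since $T^*$ is a genuine t-norm it is increasing and commutative, whence $T^*\bigl(x,T(x,y)\bigr)\unlhd T^*(x,y)$. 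Combining this with the displayed identity yields $T(x,y)=T^*\bigl(x,T(x,y)\bigr)\unlhd T^*(x,y)$, as required.

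I do not anticipate a genuine difficulty in the computation; the single point demanding care is respecting the domain restriction of the distributivity hypothesis, and it is exactly this restriction that forces the separate treatment of $y=1$. (As a consistency check one may also note, via Proposition~\ref{boundary of Pseudo-t}, that the value $y=0$ causes no trouble, since there both $T(x,0)$ and $T^*(x,0)$ equal $0$.)
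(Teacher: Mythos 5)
Your argument is correct and is essentially the paper's own proof with the roles of $x$ and $y$ interchanged: the paper computes $T(x,y)=T(T^*(x,1),y)=T^*(T(x,y),T(1,y))=T^*(T(x,y),y)$ and then uses $T(x,y)\unlhd x$ together with the increasingness of $T^*$, whereas you instantiate the distributivity at the pair $(y,1)$ and use $T(x,y)\unlhd y$; you are in fact more careful than the paper, whose substitution is illegitimate when $x=1$ (yours when $y=1$) and which never treats that excluded case, while you do. One small repair: the chain $T(x,y)\unlhd x\wedge y\unlhd y$ tacitly invokes transitivity of $\unlhd$, which a trellis need not have; the fact you need, $T(x,y)\unlhd y$, should instead be taken directly from weak-increasingness and $1\in X^{tr}$ (namely $T(x,y)\unlhd T(1,y)=y$, which is exactly how the proof of Proposition~\ref{pseudo-t-norm is conjunctive} obtains it).
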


\begin{proof}{\ }
Let $x,y \in X$, then $T(x,y)=T(T^*(x,1),y)= T^*(T(x,y),T(1,y))=T^*(T(x,y),y)$. Since $T^*$ is a t-norm, it holds that $T^*(T(x,y),y) \unlhd T^*(x,y)$. Thus $T(x,y) \unlhd_\mathcal{WAO} T^*(x,y)$, for any $x,y \in X$. Hence, $T$ is weaker that $T^*$.
\end{proof}

\begin{proposition}\label{}
Let $(X,\unlhd,\wedge,\vee,0,1)$ be a bounded trellis and $T^*$ is a t-norm on $X$. If $T$ is $T^*$-distributive, for any $T \in \mathcal{WAO}_1(X)$, then $T^*= \wedge $. Moreover,  $(X,\unlhd,\wedge,\vee)$ is a lattice.
\end{proposition}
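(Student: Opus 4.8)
The plan is to show $T^{*}=\wedge$ by squeezing $T^{*}$ between the meet operation on both sides, and then to deduce that $X$ is a lattice from the resulting associativity of $\wedge$. The crucial observation is that the meet operation $\wedge$ is itself a pseudo-t-norm on $X$, so it belongs to $\mathcal{WAO}_1(X)$; consequently the hypothesis ``$T$ is $T^{*}$-distributive for any $T\in\mathcal{WAO}_1(X)$'' applies in particular to the single instance $T=\wedge$, which is all I shall need.

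First I would instantiate the hypothesis at $T=\wedge$, so that $\wedge$ is $T^{*}$-distributive. Since $\wedge$ is a pseudo-t-norm and $T^{*}$ is a t-norm, Proposition~\ref{pseudo-t-norm weaker than t-norm} immediately yields $\wedge\unlhd_{\mathcal{WAO}}T^{*}$, that is, $x\wedge y\unlhd T^{*}(x,y)$ for all $x,y\in X$. For the reverse inequality I would use that every t-norm lies in $\mathcal{AO}_1(X)\subseteq\mathcal{WAO}_1(X)$, so by Proposition~\ref{pseudo-t-norm is conjunctive} the t-norm $T^{*}$ is conjunctive, i.e.\ $T^{*}(x,y)\unlhd x\wedge y$ for all $x,y\in X$. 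Combining the two inequalities and invoking the antisymmetry of $\unlhd$ gives $T^{*}(x,y)=x\wedge y$ for all $x,y$, hence $T^{*}=\wedge$.

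For the ``Moreover'' claim I would argue as follows: since $T^{*}=\wedge$ and $T^{*}$ is a t-norm, the meet operation is associative. Theorem~\ref{the equivalent between the meet (resp.join) and the pseudo-order} (equivalence of transitivity with the associativity of one of the operations) then forces the pseudo-order $\unlhd$ to be transitive, and a trellis whose underlying pseudo-order is transitive is precisely a lattice. Thus $(X,\unlhd,\wedge,\vee)$ is a lattice.

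I do not expect a genuine obstacle here, since the argument is a short chain of already-established results. The only subtle point is the recognition that the hypothesis need only be exploited at the single operation $T=\wedge$, and that pairing Proposition~\ref{pseudo-t-norm weaker than t-norm} with the conjunctivity of $T^{*}$ squeezes $T^{*}$ exactly onto $\wedge$, after which the lattice conclusion is a direct consequence of the transitivity criterion.
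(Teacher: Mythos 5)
Your proposal is correct and takes essentially the same approach as the paper: instantiate the distributivity hypothesis at the single pseudo-t-norm $T=\wedge$, apply Proposition~\ref{pseudo-t-norm weaker than t-norm} to obtain $\wedge \unlhd_\mathcal{WAO} T^*$, use conjunctivity of $T^*$ for the reverse inequality, and deduce the lattice structure from the resulting associativity of $\wedge$. The only cosmetic difference is that you explicitly invoke Theorem~\ref{the equivalent between the meet (resp.join) and the pseudo-order} for the final transitivity step, which the paper leaves implicit.
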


\begin{proof}{\ }
Let an arbitrary pseudo-t-norm $T$ and $T^*$ is a t-norm on $X$, then Proposition~\ref{pseudo-t-norm weaker than t-norm} guarantees that  $T \unlhd_\mathcal{WAO} T^*$. Suppose that $T=\wedge$, then $\wedge \unlhd_\mathcal{WAO} T^*$. Since $T^*$ is conjunctive, it holds that $\wedge = T^*$. Thus, $\wedge$ is associative. Hence, $(X,\unlhd,\wedge,\vee)$ is a lattice.
\end{proof}

\begin{proposition}\label{characterise of the meet}
Let $(X,\unlhd,\wedge,\vee,0,1)$ be a bounded trellis,  $T$ is a pseudo-t-norm and $S$ is a pseudo-t-conorm on $X$. The following implications hold:
\begin{enumerate}[label=(\roman*)]
\item If $S$ is $T$-distributive, then $T$ is idempotent; 
\item If $T$ is $S$-distributive, then $S$ is idempotent.
\end{enumerate}
\end{proposition}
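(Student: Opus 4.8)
The plan is to prove (i) by a single substitution and to obtain (ii) by the symmetric substitution (or, equivalently, by dualising). For (i) I want $T(x,x)=x$ for every $x\in X$. Two of these are free: since $1$ is the neutral element of $T$ we have $T(1,1)=1$, and Proposition~\ref{boundary of Pseudo-t} gives $T(0,0)=0$. So only the interior elements carry content, and the natural idea is to feed the degenerate argument into the $T$-distributivity identity $S(x,T(y,z))=T(S(x,y),S(x,z))$.

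Concretely, I would instantiate this identity at $y=z=0$. On the right-hand side the neutral element $0$ of $S$ gives $S(x,0)=x$, so $T(S(x,0),S(x,0))=T(x,x)$. On the left-hand side, Proposition~\ref{boundary of Pseudo-t} gives $T(0,0)=0$, whence $S(x,T(0,0))=S(x,0)=x$. Comparing the two sides yields $T(x,x)=x$ for every $x\in X$, that is, $T$ is idempotent. Note that this argument needs neither conjunctivity of $T$ nor weak monotonicity: it is purely a boundary computation.

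For (ii) the mirror-image device works: starting from $T(x,S(y,z))=S(T(x,y),T(x,z))$ and instantiating at $y=z=1$, the neutral element $1$ of $T$ gives $T(x,1)=x$, so $S(T(x,1),T(x,1))=S(x,x)$, while Proposition~\ref{boundary of Pseudo-t} gives $S(1,1)=1$, so the left-hand side is $T(x,S(1,1))=T(x,1)=x$; hence $S(x,x)=x$. Alternatively (ii) is just the dual of (i): applying (i) in the dual bounded trellis $X^{*}$, where $S$ becomes a pseudo-t-norm and $T$ a pseudo-t-conorm, converts the hypothesis of (ii) into that of (i), idempotency being self-dual.

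The step I expect to be the real obstacle is the legitimacy of evaluating the distributivity identity at the boundary pairs $(y,z)=(0,0)$ and $(1,1)$ — exactly the instances my argument consumes, yet exactly the ones the definition of $F_{2}$-distributivity sets aside. So the genuine question is whether idempotency survives when only interior pairs are available. I would first check the intended reading of the definition (its verbal phrasing ``at least one of $y,z$ is not $1$'' actually admits $(0,0)$, in tension with the parenthetical exclusion); if the boundary pairs are genuinely forbidden, the substitution must be replaced by an interior argument, e.g.\ $(y,z)=(x,0)$ is admissible and gives $T(S(x,x),x)=x$, after which conjunctivity of $T$ and disjunctivity of $S$ only yield $T(x,x)\unlhd x\unlhd S(x,x)$. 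Extracting the missing inequality $x\unlhd T(x,x)$ from these restricted instances is the crux, and one should be cautious here: on some bounded trellises a non-idempotent $T$ can still satisfy the boundary-free distributivity against $S=S_{D}$, which would show that the boundary instances are indispensable and that the statement as phrased leans on them.
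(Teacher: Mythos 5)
Your proof is precisely the paper's: for (i) the paper computes $x=S(x,0)=S(x,T(0,0))=T(S(x,0),S(x,0))=T(x,x)$, using $T(0,0)=0$ from Proposition~\ref{boundary of Pseudo-t} and the neutrality of $0$ for $S$, and it settles (ii) as ``similar''. The obstacle you single out is genuine but is internal to the paper rather than a gap between your argument and its proof: the paper's own computation instantiates the distributivity identity at exactly the forbidden pair $(y,z)=(0,0)$ (and, for (ii), $(1,1)$), so either the parenthetical exclusion in the definition of $F_{2}$-distributivity is a misstatement --- the reading its other proofs support, since the proof that $T_D$ is $T$-distributive verifies the identity for all $(y,z)\neq(1,1)$ --- or the proposition is unproved as stated, in which case your fallback analysis of interior instances (and your suspicion that a counterexample with $S=S_D$ exists) is the right direction.
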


\begin{proof}{\ }
\begin{enumerate}[label=(\roman*)]
\item Since $T$ is pseudo-t-norm and $S$ is pseudo-t-conorm on $X$, it follows that $x= S(x,0)=S(x, T(0,0))=T(S(x,0),S(x,0))=T(x,x)$, for any $x \in X$. Thus, $T$ is idempotent.
\item The proof is similar to that of (i). 
\end{enumerate}
\end{proof}

Proposition~\ref{characterise of the meet} leads to the following corollary.

\begin{corollary}\label{}
Let $(X,\unlhd,\wedge,\vee,0,1)$ be a bounded trellis,  $T$ is a pseudo-t-norm and $S$ is a pseudo-t-conorm on $X$. Then it holds that:
\begin{enumerate}[label=(\roman*)]
\item If $S$ is $T$-distributive and $T$ satisfying $T(x \wedge y,x \wedge y) \unlhd T(x,y)$, for any $x,y \in X$, then $T= \wedge$. 
\item If $T$ is $S$-distributive and $S$ satisfying $S(x \vee y,x \vee y) \unlhd S(x,y)$, for any $x,y \in X$, then $S= \vee$. 
\end{enumerate} 
\end{corollary}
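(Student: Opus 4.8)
The plan is to derive the corollary directly from Proposition~\ref{characterise of the meet} combined with Proposition~\ref{characterizations of the meet and the join}. I only need to treat statement (i), as statement (ii) follows by duality via the dual bounded trellis $X^*$ (exchanging the roles of $\wedge,\vee$ and $0,1$). The key observation is that the two hypotheses of (i) exactly match the two non-trivial hypotheses appearing in part (i) of Proposition~\ref{characterizations of the meet and the join}, so the corollary is essentially an assembly of already-established facts.

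First I would apply Proposition~\ref{characterise of the meet}(i): since $S$ is $T$-distributive, it follows immediately that $T$ is idempotent. Next I note that $T$ is by assumption a pseudo-t-norm, so $T \in \mathcal{WAO}_1(X)$. The remaining hypothesis supplied in the corollary is precisely $T(x \wedge y, x \wedge y) \unlhd T(x,y)$ for all $x,y \in X$. At this point all three hypotheses of Proposition~\ref{characterizations of the meet and the join}(i) are in hand: $T \in \mathcal{WAO}_1(X)$, $T$ is idempotent, and $T$ satisfies the stated inequality. I would then invoke that proposition to conclude $T = \wedge$.

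For statement (ii), I would point out that passing to the dual bounded trellis $X^*$ turns a pseudo-t-conorm into a pseudo-t-norm and the join into the meet, so that (ii) reduces to (i) applied in $X^*$; concretely, $T$ being $S$-distributive gives, via Proposition~\ref{characterise of the meet}(ii), that $S$ is idempotent, and then Proposition~\ref{characterizations of the meet and the join}(ii) with the hypothesis $S(x \vee y, x \vee y) \unlhd S(x,y)$ yields $S = \vee$.

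The proof is essentially a chaining of two earlier results, so I do not anticipate a genuine obstacle; the only point requiring care is verifying that the hypotheses line up exactly, in particular that the idempotence delivered by Proposition~\ref{characterise of the meet} is the same idempotence demanded by Proposition~\ref{characterizations of the meet and the join}, and that the direction of the inequality $T(x\wedge y, x\wedge y)\unlhd T(x,y)$ is stated consistently between the corollary and the proposition it feeds into. Once that bookkeeping is confirmed, the conclusion is immediate.
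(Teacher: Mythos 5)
Your treatment of part (i) is correct and is exactly the paper's intended route: the paper gives no explicit proof, saying only that Proposition~\ref{characterise of the meet} ``leads to'' the corollary, and the natural completion is precisely your chaining — $T$-distributivity of $S$ gives idempotence of $T$ by Proposition~\ref{characterise of the meet}(i), and then Proposition~\ref{characterizations of the meet and the join}(i) applies verbatim, since its hypothesis $T(x \wedge y, x \wedge y) \unlhd T(x,y)$ is literally the one assumed in the corollary. For (i) there is nothing to add.

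For part (ii), however, the bookkeeping you flagged as ``the only point requiring care'' actually fails, and you never carried it out. The corollary as printed assumes $S(x \vee y, x \vee y) \unlhd S(x,y)$, whereas Proposition~\ref{characterizations of the meet and the join}(ii) requires the \emph{opposite} inequality $S(x,y) \unlhd S(x \vee y, x \vee y)$; your proof of (ii) invokes that proposition ``with the hypothesis $S(x \vee y, x \vee y) \unlhd S(x,y)$,'' which it does not accept. The mismatch is not cosmetic: once Proposition~\ref{characterise of the meet}(ii) gives idempotence of $S$, the printed hypothesis reduces to $x \vee y \unlhd S(x,y)$, which is just disjunctivity of $S$ (Proposition~\ref{pseudo-t-norm is conjunctive}(ii)) and hence carries no information; it supplies only the inequality one already has, not the missing one $S(x,y) \unlhd x \vee y$, and on a trellis weak increasingness alone does not yield that missing half. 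Note also that your own duality argument betrays the problem: dualizing the hypothesis of (i) (reversing $\unlhd$ and exchanging $\wedge$ with $\vee$) produces $S(x,y) \unlhd S(x \vee y, x \vee y)$, i.e.\ the proposition's hypothesis, not the corollary's printed one. The most plausible reading is that the paper's statement of (ii) contains a typo and should assume $S(x,y) \unlhd S(x \vee y, x \vee y)$; a complete proof had either to surface this and restate (ii), or to admit that (ii) as printed does not follow by this route.
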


\section{Relationship among $\mathcal{WAO}_e(X)$ and  isomorphisms on  bounded trellises}
In this section, we conjugate  elements of $\mathcal{WAO}_1(X)$  (resp. elements of $\mathcal{WAO}_0(X)$) and an isomorphism map on a bounded trellis $X$.  First, we start by the following proposition.

\begin{proposition}\label{rho is intern of X^{lt}}
Let $(X_1, \unlhd_{1}, \wedge_{1}, \vee_{1})$, $(X_2, \unlhd_{2}, \wedge_{2}, \vee_{2})$ be two trellises and $\rho:X_1 \longrightarrow X_2$ an  isomorphism. Then $\rho (X_1^{tr}) \subseteq X_2^{tr}$\,.
\end{proposition}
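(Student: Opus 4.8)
The plan is to first establish that the isomorphism $\rho$ both \emph{preserves} and \emph{reflects} the pseudo-order, and then to transport each of the three transitivity conditions from $\alpha$ to $\rho(\alpha)$. Since transitivity of an element is defined purely in terms of the relations $\unlhd$, once the order is shown to correspond under $\rho$, the conclusion will follow by pulling preimages back through $\rho$, applying the transitivity of $\alpha$ in $X_1$, and pushing the result forward.

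First I would prove the key lemma: for any $x,y \in X_1$, one has $x \unlhd_1 y$ if and only if $\rho(x) \unlhd_2 \rho(y)$. Recall that $x \unlhd_1 y$ is equivalent to $x \wedge_1 y = x$. The homomorphism property gives $\rho(x) \wedge_2 \rho(y) = \rho(x \wedge_1 y)$, so $x \wedge_1 y = x$ implies $\rho(x) \wedge_2 \rho(y) = \rho(x)$, that is $\rho(x) \unlhd_2 \rho(y)$. Conversely, if $\rho(x) \wedge_2 \rho(y) = \rho(x)$, then $\rho(x \wedge_1 y) = \rho(x)$, and since $\rho$ is injective (being a bijection) we obtain $x \wedge_1 y = x$, i.e.\ $x \unlhd_1 y$. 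This yields the order-equivalence in both directions.

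Next I would fix $\alpha \in X_1^{tr}$ and verify that $\rho(\alpha)$ satisfies each of the three transitivity conditions. For right-transitivity, suppose $\rho(\alpha) \unlhd_2 u \unlhd_2 v$ with $u,v \in X_2$. Using surjectivity of $\rho$, write $u = \rho(x)$ and $v = \rho(y)$; the reflection direction of the lemma yields $\alpha \unlhd_1 x \unlhd_1 y$, right-transitivity of $\alpha$ gives $\alpha \unlhd_1 y$, and the preservation direction returns $\rho(\alpha) \unlhd_2 \rho(y) = v$. The left- and middle-transitive cases are handled identically, pulling back the elements surrounding $\rho(\alpha)$ in the chain, applying the corresponding transitivity of $\alpha$, and pushing the conclusion forward. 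Hence $\rho(\alpha)$ is right-, left- and middle-transitive, so $\rho(\alpha) \in X_2^{tr}$, and as $\alpha$ was arbitrary we conclude $\rho(X_1^{tr}) \subseteq X_2^{tr}$. The only delicate point is the biconditional in the key lemma: the forward implication uses only that $\rho$ is a homomorphism, whereas the reverse implication (order reflection) crucially requires injectivity. Once both directions are in hand, the three transitivity conditions transfer mechanically, since surjectivity lets one name preimages of the surrounding elements and the two implications move the relevant relations back and forth between $X_1$ and $X_2$.
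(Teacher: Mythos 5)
Your proof is correct, and its core mechanism is the same as the paper's: use the homomorphism property to show $\rho$ preserves the pseudo-order, use bijectivity to show it reflects it (the paper phrases this as ``increasingness of $\rho^{-1}$,'' you prove it directly from injectivity via $\rho(x\wedge_1 y)=\rho(x)$), then pull a chain in $X_2$ back to $X_1$, apply transitivity there, and push the conclusion forward. The genuine difference is coverage: the paper's proof only treats the configuration $x \unlhd_2 y \unlhd_2 z$ with $z \in \rho(X_1^{tr})$, which establishes \emph{left}-transitivity of $z$ and nothing more, even though membership in $X_2^{tr}$ requires right-, left- \emph{and} middle-transitivity by the paper's own definition. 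You explicitly verify right-transitivity and note that the left and middle cases transfer by the identical pull-back/push-forward argument, so your proof is actually the more complete of the two; what the extra care buys is that the conclusion $\rho(X_1^{tr}) \subseteq X_2^{tr}$ genuinely follows, rather than only the weaker inclusion of $\rho(X_1^{tr})$ into the set of left-transitive elements of $X_2$. One small point worth making explicit in your key lemma: the equivalence $x \unlhd_1 y \Leftrightarrow x \wedge_1 y = x$ is exactly how the pseudo-order is recovered from the trellis operations in the paper's preliminaries, so your reduction of order statements to meet equations is fully justified.
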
 

\begin{proof} 
Let $x , y \in X_2$ and  $z \in \rho (X_1^{tr})$ such that $x \unlhd_{2} y \unlhd_{2} z$. Then there exist $x', y' \in X_1$ and $z' \in X_1^{tr}$ such that $\rho(x') \unlhd_{2} \rho(y') \unlhd_{2} \rho(z')$. From the increasingness of $\rho^{-1}$,  it holds  that $x' \unlhd_{1} y' \unlhd_{1} z'$.  The fact that $z' \in X_1^{tr}$ implies that $x' \unlhd_{1} z'$, i.e., $x' \wedge_{1} z' =x'$. Since $\rho$ is homomorphism; it follows that $\rho(x') \wedge_{2} \rho(z') = \rho(x' \wedge_{1} z')= \rho(x')$. Hence, $\rho(x') \unlhd_{2}  \rho(z')$, i.e., $x \unlhd_{2} z$. Thus,  $ z \in X^{tr}$. Therefore, $\rho (X_1^{tr}) \subseteq X_2^{tr}$\,. 
\end{proof}

%The above Proposition~\ref{rho is intern of X^{lt}} leads to the following corollary.
%
%\begin{corollary}\label{}
%Let $(X,\unlhd,\wedge,\vee)$ be a  trellis  and $\rho:X \longrightarrow X$ a trellis order automorphism. Then $\rho (X^{tr}) \subseteq X^{tr}$.
%\end{corollary} 

\begin{proposition}\label{associative element of rho}
Let $(X_1, \unlhd_{1}, \wedge_{1}, \vee_{1})$, $(X_2, \unlhd_{2}, \wedge_{2}, \vee_{2})$ be two trellises and $\rho:X_1 \longrightarrow X_2$ an  isomorphism. Then $[x,y,z]\in X_1^{\wedge_{1}}$ (resp.\ $[x,y,z]\in X_1^{\vee_{1}}$) if and only if $[\rho(x),\rho(y),\rho(z)]\in X_2^{\wedge_{2}}$ (resp.\ $[\rho(x),\rho(y),\rho(z)]\in X_2^{\vee_{2}}$).
\end{proposition}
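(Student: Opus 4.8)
The plan is to reduce the triple statement to a single-element statement. By the membership convention of the paper, $[x,y,z]\in X_1^{\wedge_1}$ means exactly that $\{x,y,z\}\cap X_1^{\wedge_1}\neq\emptyset$, i.e.\ at least one of $x,y,z$ is $\wedge_1$-associative (and likewise on the $X_2$ side). Since $\rho$ is a bijection, $\{\rho(x),\rho(y),\rho(z)\}=\rho(\{x,y,z\})$, so once I know that $\rho$ carries $X_1^{\wedge_1}$ onto $X_2^{\wedge_2}$, injectivity gives $\{\rho(x),\rho(y),\rho(z)\}\cap X_2^{\wedge_2}=\rho\bigl(\{x,y,z\}\cap X_1^{\wedge_1}\bigr)$, which is nonempty precisely when $\{x,y,z\}\cap X_1^{\wedge_1}$ is. Hence the whole equivalence reduces to the single-element claim that $\alpha\in X_1^{\wedge_1}$ if and only if $\rho(\alpha)\in X_2^{\wedge_2}$.

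To prove the forward direction of this claim, fix $\alpha\in X_1^{\wedge_1}$ and take arbitrary $p,q\in X_2$. By commutativity of the meet it suffices to verify the associativity equation for the triple $(\rho(\alpha),p,q)$. Using surjectivity of $\rho$, write $p=\rho(u)$ and $q=\rho(v)$ with $u,v\in X_1$. The homomorphism property lets me push $\rho$ through the meets:
\begin{align*}
(\rho(\alpha)\wedge_2 p)\wedge_2 q &= \rho\bigl((\alpha\wedge_1 u)\wedge_1 v\bigr),\\
\rho(\alpha)\wedge_2(p\wedge_2 q) &= \rho\bigl(\alpha\wedge_1(u\wedge_1 v)\bigr).
\end{align*}
Since $\alpha$ is $\wedge_1$-associative, $(\alpha\wedge_1 u)\wedge_1 v=\alpha\wedge_1(u\wedge_1 v)$, and applying the (well-defined) map $\rho$ to both sides yields equality of the two right-hand expressions. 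As $p,q$ were arbitrary, $\rho(\alpha)$ is $\wedge_2$-associative.

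For the converse I would apply the forward direction to the inverse map. The inverse of a bijective homomorphism of trellises is again a homomorphism: writing $a=\rho(x)$ and $b=\rho(y)$ and using that $\rho$ preserves $\wedge$ gives $\rho^{-1}(a\wedge_2 b)=\rho^{-1}(a)\wedge_1\rho^{-1}(b)$, and similarly for $\vee$, so $\rho^{-1}$ is an isomorphism $X_2\to X_1$. Applying the already-proved implication to $\rho^{-1}$ yields $\rho(\alpha)\in X_2^{\wedge_2}\Rightarrow\alpha\in X_1^{\wedge_1}$. The $\vee$-associative case is identical word for word with $\wedge_i$ replaced by $\vee_i$. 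The argument is essentially routine; the points that need care are the bookkeeping with the membership notation $[\cdot]\in A$ and the systematic use of bijectivity — surjectivity to reach every pair $(p,q)$ in $X_2$ when checking associativity, and injectivity to transport nonempty intersections — together with the verification that $\rho^{-1}$ is a homomorphism, which is precisely what upgrades a one-directional implication to the stated equivalence.
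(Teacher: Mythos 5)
Your proof is correct, and it is structured differently from --- and more carefully than --- the paper's own argument. The paper's proof takes the given triple $(x,y,z)$ with $[x,y,z]\in X_1^{\wedge_1}$, pushes $\rho$ through the meets to transport the identity $(x\wedge_1 y)\wedge_1 z = x\wedge_1(y\wedge_1 z)$ to $(\rho(x)\wedge_2\rho(y))\wedge_2\rho(z) = \rho(x)\wedge_2(\rho(y)\wedge_2\rho(z))$, and immediately concludes $[\rho(x),\rho(y),\rho(z)]\in X_2^{\wedge_2}$, dismissing the converse as similar. Under the paper's own notational convention ($[x,y,z]\in A$ means $\{x,y,z\}\cap A\neq\emptyset$, where $X^{\wedge}$ is the set of $\wedge$-associative \emph{elements}), that last step is a gap: verifying the associativity equation for the single image triple is strictly weaker than exhibiting an element among $\rho(x),\rho(y),\rho(z)$ all of whose 3-tuples are associative. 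Your reduction to the element-wise claim $\alpha\in X_1^{\wedge_1}\Leftrightarrow\rho(\alpha)\in X_2^{\wedge_2}$, with surjectivity used to make the argument range over \emph{all} pairs $(p,q)\in X_2^2$ and the inverse isomorphism used for the converse direction, is exactly what closes this gap; the underlying computation is the same as the paper's, but your quantification is the one the statement as literally formulated requires, and you also supply the converse that the paper leaves implicit. Your appeal to commutativity to restrict attention to triples of the form $(\rho(\alpha),p,q)$ is legitimate and matches the remark the paper makes right after defining associative elements, and your verification that $\rho^{-1}$ is again an isomorphism is the small but necessary lemma that upgrades the one-directional implication to the stated equivalence.
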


\begin{proof} 
Let $x,y,z \in X_1$ such that  $[x,y,z]\in X_1^{\wedge_{1}}$. Since $\rho$ is an  isomorphism, then
\begin{align*}
\rho(x) \wedge_2 (\rho(y) \wedge_2 \rho(z))&=\rho(x) \wedge_2 \rho(y \wedge_1 z))\\
&=\rho(x \wedge_1 (y \wedge_1 z))\\
&=\rho((x \wedge_1 y) \wedge_1 z)\\
&= \rho(x\wedge_1 y) \wedge_2 \rho(z))\\
&=(\rho(x) \wedge_2 \rho(y)) \wedge_2 \rho(z)\,.
\end{align*}
 Therefore, $[\rho(x),\rho(y),\rho(z)]\in X_2^{\wedge_{2}}$. In a similar way, we prove that $[x,y,z]\in X_1^{\vee_{1}}$ if and only if $[\rho(x),\rho(y),\rho(z)]\in X_2^{\vee_{2}}$\,.
\end{proof}
\begin{proposition}\label{transformation of pseudo-t-norm}
$(X_1, \unlhd_{1}, \wedge_{1}, \vee_{1},0_1,1_1)$, $(X_2, \unlhd_{2}, \wedge_{2}, \vee_{2},0_2,1_2)$ be two bounded trellises, $T \in \mathcal{WAO}_1(X_2)$ and $\rho:X_1 \longrightarrow X_2$ an  isomorphism. Then the binary operation $T^\rho$ defined by:
\begin{center}
$T^\rho(x,y)= \rho^{-1}(T(\rho(x), \rho (y)))$, for any $x,y\in X_1$\,,  
\end{center}
is an element of $\mathcal{WAO}_1(X_1)$. 
\end{proposition}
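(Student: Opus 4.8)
The plan is to verify directly the four defining properties of a pseudo-t-norm from Definition~\ref{Pseudo-t-norm} for $T^\rho$, transporting each property of $T$ from $X_2$ back to $X_1$ through the isomorphism $\rho$. Commutativity is immediate, since $T^\rho(x,y)=\rho^{-1}(T(\rho(x),\rho(y)))=\rho^{-1}(T(\rho(y),\rho(x)))=T^\rho(y,x)$ by the commutativity of $T$. For the neutral element I would first observe that $\rho(1_1)=1_2$: indeed $a\unlhd_1 1_1$ for every $a\in X_1$ gives $\rho(a)\wedge_2\rho(1_1)=\rho(a)$, and surjectivity of $\rho$ then forces $\rho(1_1)$ to be the greatest element of $X_2$. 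Consequently $T^\rho(1_1,x)=\rho^{-1}(T(\rho(1_1),\rho(x)))=\rho^{-1}(T(1_2,\rho(x)))=\rho^{-1}(\rho(x))=x$.

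For weak increasingness, I would take $[x,y]\in X_1^{tr}$ with $x\unlhd_1 y$ and an arbitrary $z\in X_1$. Since $\rho$ is a homomorphism it is order-preserving, so $\rho(x)\unlhd_2\rho(y)$, and Proposition~\ref{rho is intern of X^{lt}} guarantees $\rho(X_1^{tr})\subseteq X_2^{tr}$, whence $[\rho(x),\rho(y)]\in X_2^{tr}$. The weak increasingness of $T$ on $X_2$ then yields $T(\rho(x),\rho(z))\unlhd_2 T(\rho(y),\rho(z))$, and applying the (also order-preserving) map $\rho^{-1}$ gives $T^\rho(x,z)\unlhd_1 T^\rho(y,z)$, as required.

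For weak associativity, I would take $[x,y,z]\in X_1^{\wedge-ass}$, the $\vee$-case being symmetric. Proposition~\ref{associative element of rho} transfers this to $[\rho(x),\rho(y),\rho(z)]\in X_2^{\wedge-ass}$, so the weak associativity of $T$ applies to this triple. Using the cancellation $\rho(\rho^{-1}(u))=u$ to simplify the inner evaluations, both $T^\rho(x,T^\rho(y,z))$ and $T^\rho(T^\rho(x,y),z)$ reduce to $\rho^{-1}$ applied to $T(\rho(x),T(\rho(y),\rho(z)))$ and to $T(T(\rho(x),\rho(y)),\rho(z))$ respectively, and these two expressions coincide. The only genuine subtlety --- and the point where the two preliminary propositions do the real work --- is that weak increasingness and weak associativity of a pseudo-t-norm are quantified only over transitive pairs and associative triples; the crux is therefore to confirm that $\rho$ carries these restricted classes of arguments in $X_1$ to the corresponding restricted classes in $X_2$, which is exactly what Propositions~\ref{rho is intern of X^{lt}} and~\ref{associative element of rho} provide. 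Collecting the four properties establishes $T^\rho\in\mathcal{WAO}_1(X_1)$.
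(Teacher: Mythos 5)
Your proposal is correct and follows essentially the same route as the paper's proof: commutativity and the boundary condition are handled directly, while weak increasingness and weak associativity are transported through $\rho$ using Propositions~\ref{rho is intern of X^{lt}} and~\ref{associative element of rho}, exactly as the paper does. Your explicit verification that $\rho(1_1)=1_2$ is a small detail the paper leaves as ``easily verified,'' but it does not change the structure of the argument.
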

\begin{proof}{\ } 
One  easily verifies that $T^\rho$ is commutative and satisfies the boundary condition. Now, let $(x,y) \in X_1 \times X_1^{tr}$ such that $x \unlhd_1 y$. Proposition~\ref{rho is intern of X^{lt}} assures that $\rho(y) \in X_2^{tr}$. Since $T$ is weakly-increasing, it holds that  $T(\rho(x), \rho (z))\unlhd_2 T(\rho(y), \rho (z))$, for any $z\in X_1$. The fact that $\rho^{-1}$  is increasing on $X_2$ implies that  $\rho^{-1}(T(\rho(x), \rho (z))) \unlhd_1 \rho^{-1}(T(\rho(y), \rho (z)))$, for any $z \in X_1$, i.e., $T^\rho(x,z) \unlhd_1 T^\rho(y,z)$, for any $z \in X_1$.  Hence, $T^\rho$ is weakly-increasing on $X_1$. Next, we prove that $T^\rho$ is weakly-associative. Let $ x ,y,z \in X_1$ such that $[x,y,z] \in X_1^{\wedge_{1}}$. Proposition~\ref{associative element of rho} assures that $(\rho(x),\rho(y),\rho(z))\in X_2^{\wedge_{2}}$. Thus
\begin{align*}
T^\rho(T^\rho(x,y),z)&=\rho^{-1}(T(\rho(T^\rho(x,y)), \rho (z)))\\
& =\rho^{-1}(T(\rho(\rho^{-1}(T(\rho(x), \rho (y)))), \rho (z)))\\
&= \rho^{-1}(T(T(\rho(x), \rho (y)), \rho (z)))\\
&= \rho^{-1}(T(\rho(x), T(\rho (y), \rho (z))))\\
& =\rho^{-1}(T(\rho(x), \rho(\rho^{-1}(T(\rho (y), \rho (z))))))\\
&=\rho^{-1}(T(\rho(x), \rho(T^\rho(y,z))))\\
&=T^\rho(x,T^\rho(y,z))\,.
\end{align*}
Hence, $T^\rho$ is weakly-associative on $X_1$. Therefore, $T^\rho \in \mathcal{WAO}_1(X_1)$\,. 
\end{proof} 

Notice that in a bounded trellis $(X,\unlhd,\wedge,\vee,0,1)$, the identity map $Id_{X}$ of $X$ (i.e., $Id_{X}(x) = x$, for any $x \in X$) is an  isomorphism (automorphism). Then   $T^{Id_{X}}=T$, for any  $T \in \mathcal{WAO}_1(X)$.

Dually, we have the following result for the elements of $\mathcal{WAO}_0(X)$.

\begin{proposition}\label{transformation of pseudo-t-conorm}
$(X_1, \unlhd_{1}, \wedge_{1}, \vee_{1},0_1,1_1)$, $(X_2, \unlhd_{2}, \wedge_{2}, \vee_{2},0_2,1_2)$ be two bounded trellises, $S \in \mathcal{WAO}_0(X_2)$ and $\rho:X_1 \longrightarrow X_2$ an  isomorphism. Then the binary operation $S^\rho$ defined by:
\begin{center}
$S^\rho(x,y)= \rho^{-1}(S(\rho(x), \rho (y)))$, for any $x,y\in X_1$\,,  
\end{center}
is an element of $\mathcal{WAO}_0(X_1)$. 
\end{proposition}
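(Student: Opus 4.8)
The plan is to mirror the proof of Proposition~\ref{transformation of pseudo-t-norm} under the order-reversing dictionary that interchanges $\wedge$ with $\vee$ and $0$ with $1$; I would verify the four defining properties of a pseudo-t-conorm for $S^\rho$ one at a time. Commutativity of $S^\rho$ is immediate from the commutativity of $S$ and the bijectivity of $\rho$. For the neutral element, I would first record that an isomorphism of bounded trellises sends $0_1$ to $0_2$: since $\rho$ is a surjective homomorphism, $\rho(0_1)\wedge_2\rho(x)=\rho(0_1\wedge_1 x)=\rho(0_1)$ for every $x\in X_1$, so $\rho(0_1)$ lies below every element of $X_2$ and hence equals $0_2$. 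Consequently $S^\rho(x,0_1)=\rho^{-1}(S(\rho(x),0_2))=\rho^{-1}(\rho(x))=x$, using that $0_2$ is neutral for $S$.

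For weak-increasingness, I would take $(x,y)\in X_1\times X_1^{tr}$ with $x\unlhd_1 y$ and an arbitrary $z\in X_1$. Proposition~\ref{rho is intern of X^{lt}} guarantees $\rho(y)\in X_2^{tr}$, so the weak-increasingness of $S$ yields $S(\rho(x),\rho(z))\unlhd_2 S(\rho(y),\rho(z))$. Applying the order-preserving map $\rho^{-1}$ then gives $S^\rho(x,z)\unlhd_1 S^\rho(y,z)$, which is exactly what is required.

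For weak-associativity, I would take $x,y,z\in X_1$ with $[x,y,z]\in X_1^{\vee_1}$ (the $\wedge_1$ case being symmetric). Proposition~\ref{associative element of rho} transfers this to $[\rho(x),\rho(y),\rho(z)]\in X_2^{\vee_2}$, which is precisely the hypothesis under which $S$ is associative on that triple. The same telescoping computation as in Proposition~\ref{transformation of pseudo-t-norm}---repeatedly inserting $\rho\circ\rho^{-1}=\mathrm{Id}$ and invoking the weak-associativity of $S$ at $(\rho(x),\rho(y),\rho(z))$---then establishes $S^\rho(S^\rho(x,y),z)=S^\rho(x,S^\rho(y,z))$, so $S^\rho\in\mathcal{WAO}_0(X_1)$.

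I do not anticipate a genuine obstacle, since every step dualizes cleanly from the pseudo-t-norm case; the only points demanding care are confirming $\rho(0_1)=0_2$ so that the neutral-element condition is stated for the correct bound, and ensuring that the associativity transfer invoked is the $\vee$-version of Proposition~\ref{associative element of rho} rather than the $\wedge$-version. Both are direct, so the argument is essentially a verification that the conjugation by $\rho$ respects the order-theoretic data ($X^{tr}$ and the $\vee$-associative triples) that the definition of $\mathcal{WAO}_0$ relies on.
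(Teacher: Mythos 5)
Your proposal is correct and follows exactly the route the paper intends: the paper states this proposition without proof as the dual of Proposition~\ref{transformation of pseudo-t-norm}, and your argument is precisely that dualization, reusing Propositions~\ref{rho is intern of X^{lt}} and~\ref{associative element of rho} and the same telescoping computation. Your explicit check that $\rho(0_1)=0_2$ is a small but welcome addition that the paper leaves implicit.
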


\section{Conclusion}\label{Conclusion}
In this paper, we have studied the notion of pseudo-triangular norms on a bounded trellis and provided some examples. Further, we have provided some new class of pseudo-triangular norms and some characterisation. We intend that this study open the door of different applications of trellis structure in various areas using pseudo-triangular norms.

\section*{References}\label{References}

\end{document}